\definecolor{darkred}{rgb}{0.5,0,0}
\definecolor{darkgreen}{rgb}{0,0.5,0}
\definecolor{darkblue}{rgb}{0,0,0.5}
\setlist[enumerate,1]{label=(\roman*)}
\newcommand\reallywidehat[1]{%
\savestack{\tmpbox}{\stretchto{%
  \scaleto{%
    \scalerel*[\widthof{\ensuremath{#1}}]{\kern-.6pt\bigwedge\kern-.6pt}%
    {\rule[-\textheight/2]{1ex}{\textheight}}
  }{\textheight}%
}{0.5ex}}%
\stackon[1pt]{#1}{\tmpbox}%
}
\numberwithin{equation}{section}
\DeclareMathOperator{\supp}{supp}
\DeclareMathOperator{\dens}{dens}
\DeclareMathOperator{\lin}{span}
\providecommand{\cl}[1]{\mathrm{cl}(#1)}
\providecommand{\inn}[1]{\mathrm{int}(#1)}
\newcommand{\R}{{\mathbb R}}
\newcommand{\C}{{\mathbb C}}
\newcommand{\N}{\mathbb N}
\newcommand{\cd}{{\mathcal D}}
\newcommand{\cA}{{\mathcal A}}
\newcommand{\cL}{{\mathcal L}}
\renewcommand{\hat}{\widehat}
\newcommand{\mcard}{\mathrm{card}}
\renewcommand{\mid}{:}
\DeclareMathOperator{\PW}{\mathcal{PW}}
\DeclareMathOperator{\PWd}{\mathcal{PW}^\prime\!}
\DeclareMathOperator{\CS}{\mathcal{S}}
\providecommand{\cl}[1]{\cl{#1}}
\providecommand{\rstr}[1]{|_{#1}}
\providecommand{\spt}{\supp}
\providecommand{\from}{:}
\providecommand{\qtextq}[1]{\quad\text{#1}\quad}
\providecommand{\isect}{\cap}
\providecommand{\Union}{\bigcup}
\DeclarePairedDelimiter{\abs}{\lvert}{\rvert}
\providecommand{\norm}[2][]{\lVert#2\rVert\ifthenelse{\equal{}{#1}}{}{_{#1}}}
\providecommand{\bignorm}[2][]{\bigl\lVert#2\bigr\rVert\ifthenelse{\equal{}{#1}}{}{_{#1}}}
\providecommand{\Bignorm}[2][]{\Bigl\lVert#2\Bigr\rVert\ifthenelse{\equal{}{#1}}{}{_{#1}}}
\providecommand{\biggnorm}[2][]{\biggl\lVert#2\biggr\rVert\ifthenelse{\equal{}{#1}}{}{_{#1}}}
\providecommand{\Biggnorm}[2][]{\Biggl\lVert#2\Biggr\rVert\ifthenelse{\equal{}{#1}}{}{_{#1}}}
\providecommand{\spr}[3][]{\langle#2,#3\rangle\ifthenelse{\equal{}{#1}}{}{_{#1}}}
\providecommand{\bigspr}[3][]{\bigl\langle#2,#3\bigr\rangle\ifthenelse{\equal{}{#1}}{}{_{#1}}}
\providecommand{\Bigspr}[3][]{\Bigl\langle#2,#3\Bigr\rangle\ifthenelse{\equal{}{#1}}{}{_{#1}}}
\providecommand{\biggspr}[3][]{\biggl\langle#2,#3\biggr\rangle\ifthenelse{\equal{}{#1}}{}{_{#1}}}
\providecommand{\Biggspr}[3][]{\Biggl\langle#2,#3\Biggr\rangle\ifthenelse{\equal{}{#1}}{}{_{#1}}}
\let\originald\d 
\renewcommand{\d}{\ifthenelse{\boolean{mmode}}{\mathrm d}{\originald}}
\providecommand{\dd}{\,\d}
\let\originali\i 
\renewcommand{\i}{\ifthenelse{\boolean{mmode}}{\mathrm i}{\originali}}
\providecommand{\ifu}[1]{\mathbf1_{#1}}
\providecommand{\xto}{\xrightarrow}
\newtheorem{theorem}{Theorem}[section]
\newtheorem{lemma}[theorem]{Lemma}
\newtheorem{prop}[theorem]{Proposition}
\newtheorem{fact}[theorem]{Fact}
\newtheorem{defi}[theorem]{Definition}
\newtheoremstyle{rremark}%
       {1.8ex\@plus1ex}     
       {2.1ex\@plus1ex\@minus.5ex} 
       {\normalfont}        
       {0pt}                
       {\bfseries}          
       {.}                  
       {.5em}               
       {}                   
\theoremstyle{rremark}
\newtheorem{remark}[theorem]{Remark}
\begin{document}
\title[Sampling and interpolation]{On sampling and interpolation by model sets}

\author{Christoph Richard}
\address{Department f\"{u}r Mathematik, Friedrich-Alexander-Universit\"{a}t Erlangen-N\"{u}rnberg,
Cauerstrasse 11, 91058 Erlangen, Germany}
\email{christoph.richard@fau.de}

\author{Christoph Schumacher}
\address{Fakult\"at f\"ur Mathematik, Technische Universit\"at Dortmund,
Vogelpothsweg 87, 44227 Dortmund, Germany}
\email{christoph.schumacher@mathematik.tu-dortmund.de}

\begin{abstract}
We refine a result of Matei and Meyer \cite{mm10} on stable sampling and stable interpolation for simple model sets. Our setting is model sets in locally compact abelian groups and Fourier analysis of unbounded complex Radon measures as developed by Argabright and de Lamadrid \cite{ARMA1}. This leads to a refined version of the underlying model set duality between sampling and interpolation. For rather general model sets, our methods also yield an elementary proof of stable sampling and stable interpolation sufficiently far away from the critical density, which is based on the Poisson Summation Formula.
\end{abstract}

\subjclass[2010]{43A25 (primary), and 52C23 (secondary)}

\keywords{sampling, interpolation, model set, Poisson summation formula}

\maketitle

\section{Background and plan of the article}

Sampling concerns the problem of reconstructing a function~$f$ from  its restriction~$f|_\Lambda$ to a  subdomain~$\Lambda$. Interpolation concerns the question of how to extend a function defined on the subdomain. For particular function spaces, both problems are classical in harmonic analysis. We are interested in irregular sampling domains arising from model sets in locally compact abelian groups.

\subsection{Function spaces}

Consider a locally compact abelian (LCA) group~$G$ and its dual group~$\widehat G$. Later, we will sometimes require~$G$ to be $\sigma$-compact or metrizable.
Note that $\sigma$-compactness and metrisability are dual notions \cite[(24.48)]{HeRo}, and that $\sigma$-compactness and metrisability is equivalent to second countability \cite[Chap.~IX, §2.9, Cor]{Bou}. As a consequence, second countability of~$G$ is equivalent to second countability of $\widehat G$.
Fix Haar measures~$\theta_G$ on~$G$ and~$\theta_{\widehat G}$ on~$\widehat G$ such that the Fourier inversion theorem \cite[Thm.~4.4.5]{Rei2} holds.  Recall that for integrable $f:G\to \mathbb C$, its Fourier transform is defined by $\widehat f(\chi)=\int f(x) \overline{\chi(x)}\dd\theta_G(x)$ for $\chi\in\widehat G$. We also write $\widecheck f(\chi)=\int f(x) \chi(x)\dd\theta_G(x)$ for the inverse Fourier transform. 

\begin{defi}[Paley--Wiener space]
  Let~$G$ be an LCA  group.
  Consider $K\subset \widehat G$ to be relatively compact and measurable.
  Then the translation invariant vector space of functions with restricted band width
  \begin{equation*}
    \PW_K=\{f\in L^2(G) \mid  \widehat f |_{K^c}=0 \text{ almost everywhere} \}
  \end{equation*}
  is called the \emph{Paley--Wiener space} with respect to~$K$.
\end{defi}

\begin{remark}\leavevmode
\begin{itemize}
\item[(i)] We consider relatively compact~$K$ in this article only. Then~$G$ may be assumed to be metrizable. Indeed, for relatively compact~$K$ we may assume that~$\widehat G$ is compactly generated without loss of generality, see \cite[Sec.~8]{GKS08}. 
\item[(ii)] Note that $f\in \PW_K$ satisfies $\widehat f\in L^1(\widehat G)\cap L^2(\widehat G)$.
  Indeed, $\widehat f\in L^1(\widehat G)$ follows from H\"older's inequality by assumption on~$K$.
We thus have $\widecheck{\widehat{f}}\in C_0(G)$ and $f=\widecheck{\widehat{f}}$ in $L^2(G)$ by the Fourier inversion theorem. Hence any  equivalence class $f\in \PW_K$ has a unique continuous representative. In this article, we will always consider $\PW_K$ as a space of continuous functions.
\item[(iii)] Note that $(\PW_K, \norm{\cdot}_{L^2(G)})$ is a Hilbert space. Closedness is obvious from continuity of the map $f\mapsto \norm{\widehat f|_{K^c}}_{L^2(\widehat G)}^2$. The above definition is also used in \cite{KL11, AACB16}. Another definition is used in \cite{mm10, GL16}. If~$K$ is closed in $\widehat G$, then both definitions are equivalent. This is discussed in Section~\ref{sec:closed}.
\end{itemize}
\end{remark}

\medskip

The following subspace of~$\PW_K$ will be central in our subsequent analysis, as it provides continuous test functions for complex Radon measures which are transformable in the sense of Argabright and de Lamadrid \cite{ARMA1, RS15}. It is defined to be
  \begin{equation*}
    \CS_K=\PW_K\cap L^1(G)=\{f\in L^1(G) :  \widehat f |_{K^c}=0 \} \,.
  \end{equation*}
In this article, we consider~$\CS_K$ as a space of continuous functions. 
Any $f\in \CS_K$ satisfies $f\in L^1(G)\cap L^2(G)$ and $\widehat f\in C_c(\widehat G)$. 
Note that~$\CS_K$ is dense in $(\PW_K, \norm{\cdot}_{L^2(G)})$ if~$K$ is Riemann measurable, see Proposition~\ref{prop:PWdense}.

\subsection{Stable sampling and stable interpolation}

The stable sampling condition ensures that any (continuous) Paley--Wiener function is reconstructable in a stable way from its values on a given uniformly discrete point set. We say that $\Lambda\subset G$ is uniformly discrete if there exists a compact zero neighborhood $B\subset G$ such that $\mcard(\Lambda \cap (x+B))\le 1$ for all $x\in G$.

\begin{defi}[stable sampling]\label{def:StSa}
  Let~$G$ be an LCA group and let $K\subset\widehat G$ be relatively compact and measurable.
  A uniformly discrete set $\Lambda\subset G$ is \emph{stable sampling} for $\PW_K$ if and only if there exist positive finite constants $A,B$
  such that
  \begin{equation}\label{eq:stablesampling}
    A\norm{f}_{L^2(G)}^2
    \le \norm{f|_{\Lambda}}_{\ell^2(\Lambda)}^2
    \le B\norm{f}_{L^2(G)}^2
  \end{equation}
for all $f\in\PW_K$.
\end{defi}
 If~$\Lambda$ is stable sampling for~$\PW_K$, then any $f\in\PW_K$ is uniquely determined by its values on $\Lambda$. Indeed, for $f_1,f_2\in\PW_K$ satisfying ${f_1}|_\Lambda={f_2}|_\Lambda$ we have $f=f_1-f_2\in\PW_K$, too. Thus $\norm{f}_{L^2(G)}\le0$, which implies $f=0$ pointwise by continuity of~$f$.

\medskip

For a uniformly discrete set~$\Lambda$, stable interpolation for~$\PW_K$ may be described as follows: For every square integrable sequence $\varphi\in\ell^2(\Lambda)$ there exists at least one $f\in\PW_K$ such that $f=\varphi$ on~$\Lambda$. As discussed in Remark~\ref{rem:exponential}, this interpolation property is rephrased in the following definition.  Let us denote by~$\ifu S$ the indicator function of the set~$S$. 

\begin{defi}[stable interpolation]\label{def:si}
  Let~$G$ be an LCA group and let $K\subset\hat G$ be relatively compact and measurable.
  A uniformly discrete set $\Lambda\subset G$ is \emph{stable interpolating} for $\PW_K$  if and only if there exist positive finite constants $A,B$
  such that
  \begin{equation}\label{eq:si}
    A\norm\varphi_{\ell^2(\Lambda)}^2
    \le
      \norm{\ifu Kf_\varphi}_{L^2(\widehat G)}^2
    \le B\norm\varphi_{\ell^2(\Lambda)}^2
  \end{equation}
for all finitely supported $\varphi\in \ell^2(\Lambda)$.
Here $f_\varphi\in C(\widehat G)$ is the discrete Fourier transform of $\varphi$ given by
  $f_\varphi(\chi)=\sum_{\lambda\in\Lambda}\varphi(\lambda)\overline{\chi(\lambda)}$.

\end{defi}

\begin{remark}\label{rem:exponential}
As is well known, stable sampling and stable interpolating can be rephrased using Hilbert space techniques \cite{Y01}. Let us briefly indicate this connection in order to clarify the above definitions. For relatively compact and measurable $K\subset \widehat G$ consider the Hilbert space $L^2(K)$, compare Lemma~\ref{prop:L2closed}. The Fourier transform provides a norm preserving isomorphism between $\PW_K$ and $L^2(K)$. For $x\in G$ consider the ``exponential'' $e_x: \widehat G\to \mathbb C$ defined by $e_x(\chi):=\chi(x)$ for all $\chi\in \widehat G$.
If $\langle \cdot, \cdot \rangle$ denotes the usual scalar product in $L^2(K)$, we then have for any $f\in \PW_K$ and any $x\in G$ that
\begin{displaymath}
\langle \widehat f, e_{-x}|_K\rangle = \int_K \widehat f(\chi) \overline{e_{-x}|_K}(\chi) \, {\rm d} \theta_{\widehat G}(\chi)= \int \widehat f(\chi) \chi(x) \, {\rm d} \theta_{\widehat G}(\chi)=\widecheck{\widehat f}(x)=f(x) \ .
\end{displaymath}
For a uniformly discrete set $\Lambda\subset G$ consider $E(\Lambda)\subset L^2(K)$ defined by
\begin{displaymath}
E(\Lambda)=\{ e_{-\lambda}|_{K}: \lambda\in \Lambda\} \ .
\end{displaymath}
The stable sampling inequalities say that~$E(\Lambda)$ is a so-called frame in~$L^2(K)$. In our setting frame theory provides a reconstruction algorithm for $\widehat f\in L^2(K)$, see e.\,g.~\cite[p.~159]{Y01}.
The interpolation property is equivalent to requiring that~$E(\Lambda)$ is a so-called Riesz-Fisher sequence. 
Hilbert space arguments for Riesz-Fisher sequences, see e.\,g.~\cite[p.~129]{Y01}, can be used to infer the following: The interpolation property is equivalent to the existence of a lower inequality  in  Eqn.~\eqref{eq:si}. Moreover, existence of an upper inequality  in Eqn.~\eqref{eq:si} is equivalent to the existence of an upper inequality  in  Eqn.~\eqref{eq:stablesampling}. Is is well known that the upper inequalities hold for uniformly discrete sets~$\Lambda$. We will give an elementary proof of the upper inequality for interpolation in Lemma~\ref{lem:iub}.
\end{remark}

\subsection{Landau's necessity conditions}

In Euclidean space, necessary conditions for~$\Lambda$ to be a set of stable sampling or of stable interpolation for~$\PW_K$ have been given by Landau \cite{L67}. These have been extended to general LCA groups in \cite{GKS08}. The conditions are formulated in terms of a particular uniform density.

\smallskip

In order to compute asymptotic frequencies, one often uses suitable averaging sequences generalizing balls, whose ``boundary to bulk ratio'' vanishes asymptotically. We will work with so-called van Hove sequences.
For their definition, consider $A\subset G$ and a compact set $K\subset G$.  We denote the topological closure and interior of a set~$A$ by $\cl A$ and $\inn A$, respectively.
 The \emph{$K$-boundary} of~$A$ is given by
\begin{displaymath}
\partial^K\!A = [(K+\cl{A}) \cap \cl{A^c}] \cup [(K+\cl{A^c})\cap \cl{A}] \, .
\end{displaymath}
Note that~$\partial^K\!A$ is compact if~$A$ is relatively compact.
As $\partial A=\partial^{\{e\}}A\subset \partial^U\!A$ for~$U$ any compact zero neighborhood, the van Hove boundary may be considered as a thickened topological boundary in that case.
The value $\theta_G(\partial^U \! A)$, where $\theta_G$ is the chosen Haar measure on~$G$ and~$A$ is relatively compact, can be chosen arbitrarily close to $\theta_G(\partial A)$ for sufficiently concentrated zero neighborhoods~$U$, compare \cite[Lem.~4.4]{HR15}.
A sequence $(A_n)_{n\in\mathbb N}$ of compact sets in~$G$ of positive Haar measure is \emph{van Hove} if and only if
\begin{displaymath}
\frac{\theta_G(\partial^K \!A_n)}{\theta_G(A_n)} \to 0 \qquad (n\to\infty) \ ,
\end{displaymath}
for every compact set~$K$ in~$G$. Van Hove sequences have been introduced by Schlottmann \cite{Martin2} for model set analysis, where also their existence in $\sigma$-compact LCA groups is shown.
In Euclidean space, any sequence of nonempty compact rectangular boxes of diverging positive inradius is a van Hove sequence, as well as any sequence of compact nonempty balls of diverging positive radius is a van Hove sequence.
In general, note that the van Hove property is stable under shifting, i.\,e., for a van Hove sequence $(A_n)_{n\in\mathbb N}$ the sequence $(A_n+x_n)_{n\in\mathbb N}$ is still van Hove, for any shift sequence $(x_n)_{n\in\mathbb N}$. Further properties are collected in Section~\ref{sec:vHBd}.  For additional background, see e.\,g.~\cite{LR, mr13, HR15} and references therein.

\begin{defi}[Banach densities]
Let~$\Lambda$ be a uniformly discrete set in  a $\sigma$-compact LCA group~$G$. Let $\cA=(A_n)_{n\in\mathbb N}$ be any van Hove sequence in~$G$. We define the lower and upper Banach densities
\begin{displaymath}
\begin{split}
\cd^-_\cA(\Lambda)&= \liminf_{n\to\infty} \inf_{t\in G} \frac{1}{\theta_G(A_n)}\mcard(\Lambda\cap (A_n+t)) \,, \\
\cd^+_\cA(\Lambda)&= \limsup_{n\to\infty} \sup_{t\in G}\frac{1}{\theta_G(A_n)}\mcard(\Lambda\cap (A_n+t))\,.
\end{split}
\end{displaymath}
If $\cd^-_\cA(\Lambda)=\cd^+_\cA(\Lambda)=:\cd(\Lambda)$, we say that~$\Lambda$ has Banach density~$\cd(\Lambda)$.
\end{defi}

The above densities are finite due to uniform discreteness of~$\Lambda$.
In $G=\mathbb R^d$ and with centered $n$-balls~$A_n$, the above densities are also called Landau-Beurling densities,  compare \cite[Sec.~7]{GKS08}. In ergodic theory or number theory, the attribution to Banach is more common. Note that existence of a Banach density implies independence of the choice of the van Hove sequence.  Banach densities exist for any regular model set by  Fact~\ref{thm:df2}, see Section~\ref{mslcs} for definitions. In particular any \emph{lattice}~$L$ in~$G$, i.\,e., any discrete co-compact subgroup of~$G$, has a Banach density, which equals the inverse measure of a measurable fundamental domain. For countable LCA groups and uniformly discrete~$\Lambda$, it has recently been shown that the lower and upper Banach densities exist as a limit \cite[Sec.~2.2]{DHZ19}.

\medskip

We rephrase Theorem 1'' from \cite{GKS08} using upper and lower Banach densities. This is justified by Lemma~\ref{lem:BanvH}.

\begin{fact}[Landau's necessity conditions]\label{thm:landau}
Let~$G$ be a $\sigma$-compact LCA group.  Let $K\subset \widehat G$ be a relatively compact measurable set. Let $\Lambda\subset G$ be a uniformly discrete set and let~$\cA$ be any van Hove sequence in~$G$. Then the following hold.
\begin{enumerate}
\item[a)] If~$\Lambda$ is stable sampling for~$\PW_K$, then $\cd^-_\cA(\Lambda)\ge \theta_{\widehat G}(K)$.
\item[b)] If~$\Lambda$ is stable interpolating for~$\PW_K$, then $\cd^+_\cA(\Lambda)\le  \theta_{\widehat G}(K)$.
\qed
\end{enumerate}
\end{fact}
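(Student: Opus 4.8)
The plan is to deduce both statements from Theorem~1'' of \cite{GKS08}, which establishes exactly these Landau-type necessity conditions but phrased in the density language used there; the underlying trace-comparison estimate of Landau type is not reproved. Since \cite{GKS08} also operates in $\sigma$-compact LCA groups with relatively compact measurable $K\subset\widehat G$, the only point that needs an argument is the identification of the Banach densities $\cd^-_\cA(\Lambda)$ and $\cd^+_\cA(\Lambda)$ with the uniform densities appearing in \cite{GKS08}, together with the assertion implicit in our formulation that these numbers do not depend on the chosen van Hove sequence~$\cA$. This is precisely what Lemma~\ref{lem:BanvH} supplies.

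Concretely, I would first record the density used in \cite{GKS08} --- which is again built from the ratios $\mcard(\Lambda\cap(A_n+t))/\theta_G(A_n)$ by taking infima (resp.\ suprema) over translates $t\in G$ and then lower (resp.\ upper) limits in~$n$ --- and then invoke Lemma~\ref{lem:BanvH} to conclude that this quantity equals $\cd^-_\cA(\Lambda)$ (resp.\ $\cd^+_\cA(\Lambda)$) for \emph{every} van Hove sequence~$\cA$. Substituting this identification into \cite[Thm.~1'']{GKS08} then yields a) from the sampling part and b) from the interpolation part, and we are done.

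The real content therefore sits inside Lemma~\ref{lem:BanvH}, and I expect its proof to be the main obstacle. The natural argument is a covering/tiling comparison between two van Hove sequences $(A_n)_{n\in\N}$ and $(B_m)_{m\in\N}$: cover a translate $A_n+t$, up to a set whose Haar measure is negligible relative to $\theta_G(A_n)$ by the van Hove property, by finitely many translates of some $B_m$, and conversely; the supremum/infimum over translates $t$ absorbs the positions of the covering pieces, and the van Hove boundary term vanishes in the limit, giving $\cd^-_\cA(\Lambda)=\cd^-_\cB(\Lambda)$ and $\cd^+_\cA(\Lambda)=\cd^+_\cB(\Lambda)$. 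Two care points: the covering estimate must use only Haar measure and uniform discreteness of~$\Lambda$, not any further internal structure; and one must verify that the hypotheses of \cite{GKS08} match ours, where the reduction to $\widehat G$ compactly generated recorded in the remark above removes any residual second-countability assumption. Granting Lemma~\ref{lem:BanvH}, the statement is immediate.
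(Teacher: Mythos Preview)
Your overall plan—deduce the fact from \cite[Thm.~1'']{GKS08} and use Lemma~\ref{lem:BanvH} to translate their density notion into the Banach densities $\cd^\pm_\cA(\Lambda)$—is exactly what the paper does. However, you have misidentified the content of Lemma~\ref{lem:BanvH}, and as a result your proposed proof of it heads in the wrong direction.

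The density notion underlying \cite[Thm.~1'']{GKS08} is not a Banach-type density built from some particular averaging sequence; it is the measure-comparison relation~$\preceq$ recalled in the paragraph preceding Lemma~\ref{lem:BanvH}: $\mu\preceq\nu$ means that for every $\varepsilon>0$ there is a compact~$C$ with $(1-\varepsilon)\mu(A)\le\nu(C+A)$ for all compact~$A$. Theorem~1'' of \cite{GKS08} delivers the conclusions $\delta_\Lambda\succeq\theta_{\widehat G}(K)\,\theta_G$ (sampling case) and $\delta_\Lambda\preceq\theta_{\widehat G}(K)\,\theta_G$ (interpolation case). Lemma~\ref{lem:BanvH} then proves the one-way implications
\[
\delta_\Lambda\succeq\alpha\theta_G\ \Longrightarrow\ \cd^-_\cA(\Lambda)\ge\alpha,
\qquad
\delta_\Lambda\preceq\alpha\theta_G\ \Longrightarrow\ \cd^+_\cA(\Lambda)\le\alpha,
\]
valid for \emph{any} van Hove sequence~$\cA$. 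Its proof is a short van Hove boundary estimate: substitute $A=A_n+s$ into the defining inequality for~$\preceq$, use $C+A_n\subset A_n\cup\partial^C A_n$ from Lemma~\ref{lem:vHb}(i), and let the boundary term vanish uniformly in~$s$. There is no covering or tiling comparison between two van Hove sequences. The independence of~$\cA$ that you are after emerges automatically, because the argument factors through the sequence-free relation~$\preceq$; it is not proved by matching one van Hove sequence against another.
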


\subsection{Sampling and interpolation near the critical density}

An interesting problem concerns the question which point sets admit stable sampling or stable interpolation arbitrarily close to the above critical density value $\theta_{\widehat G}(K)$. A classic result for subsets of the line traces back to works of Ingham, Beurling and Kahane. See \cite[Thm.~4.3, Thm.~4.5]{mm10} for the statement and a discussion.

\begin{fact}[Stable sampling and stable interpolation on the line]\label{fact:1d}
Let $I\subset \mathbb R$ be a bounded interval, and let $\Lambda\subset \mathbb R$ be a uniformly discrete set. Let~$\mathcal A$ be any van Hove sequence of intervals in~$\mathbb R$.

\begin{itemize}
\item[(i)] If $\mathcal D_{\mathcal A}^-(\Lambda)>\abs{I}$, then~$\Lambda$ is stable sampling for~$\PW_I$.
\item[(ii)]  If $\mathcal D_{\mathcal A}^+(\Lambda)<\abs{I}$, then~$\Lambda$ is stable interpolating for~$\PW_I$.
\qed
\end{itemize}
\end{fact}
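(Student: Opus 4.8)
We outline the classical argument; see \cite[Thm.~4.3, Thm.~4.5]{mm10} and the references given there.

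\emph{Reductions.} Both assertions are invariant under affine changes of variable on~$\R$: translating~$I$ multiplies~$\hat f$ by a character and is an isometry on all relevant spaces, while dilating~$\R$ by~$c>0$ replaces~$\Lambda$ by~$c\Lambda$, $I$ by~$c^{-1}I$, and every Banach density by~$c^{-1}$ times the original, so the two sides of each hypothesis rescale in the same way. We may therefore assume $I=\Ioo{-1/2}{1/2}$, so $\abs I=1$ and the hypotheses read $\cd^-_\cA(\Lambda)>1$ in~(i) and $\cd^+_\cA(\Lambda)<1$ in~(ii). Since~$\cA$ consists of intervals of diverging length, these Banach densities coincide with the Beurling lower and upper uniform densities $D^-(\Lambda)=\lim_{r\to\infty}r^{-1}\inf_{t\in\R}\mcard(\Lambda\cap\Icc t{t+r})$ and $D^+(\Lambda)=\lim_{r\to\infty}r^{-1}\sup_{t\in\R}\mcard(\Lambda\cap\Icc t{t+r})$, the limits existing by super-\,/\,subadditivity; in particular the hypotheses do not depend on~$\cA$. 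Finally, the upper inequalities in~\eqref{eq:stablesampling} and~\eqref{eq:si} hold for every uniformly discrete~$\Lambda$ --- they are the Plancherel--P\'olya inequality and are equivalent to one another, see Remark~\ref{rem:exponential} and Lemma~\ref{lem:iub}. Hence only the two lower inequalities remain.

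\emph{Part (i).} By Remark~\ref{rem:exponential} it suffices to produce $A>0$ with $\sum_{\lambda\in\Lambda}\abs{f(\lambda)}^2\ge A\norm f_{L^2(\R)}^2$ for all $f\in\PW_I$, i.e.\ to show that $E(\Lambda)$ is a frame for $L^2(I)$. The key tool is Beurling's characterisation of sampling sets: a uniformly discrete $\Lambda\subset\R$ is stable sampling for $\PW_I$ provided every set~$\Lambda'$ in the orbit closure $W(\Lambda)$ --- the closure of $\{\Lambda-t\from t\in\R\}$ in the local topology, which identifies a uniformly discrete set with its trace on every bounded window --- is a \emph{uniqueness set} for $\PW_I$, meaning that the only $f\in\PW_I$ vanishing on~$\Lambda'$ is $f=0$. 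Granting this, one checks the uniqueness property for each $\Lambda'\in W(\Lambda)$. First, passing to the limit gives $D^-(\Lambda')\ge D^-(\Lambda)>1$, since each long window of~$\Lambda'$ is a limit of long windows of~$\Lambda$, each containing at least $(D^-(\Lambda)-\varepsilon)\cdot(\text{length})$ points. Second, a classical consequence of Jensen's formula is that the zero set of a nonzero $f\in\PW_I\subset L^2(\R)$ --- an entire function of finite exponential type --- has upper uniform density at most~$\abs I=1$. Since $D^-(\Lambda')>1$, no nonzero $f\in\PW_I$ can vanish on all of~$\Lambda'$; thus~$\Lambda'$ is a uniqueness set, and~$\Lambda$ is stable sampling.

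\emph{Part (ii).} By Remark~\ref{rem:exponential} it suffices to establish the lower inequality in~\eqref{eq:si}, i.e.\ to show that $E(\Lambda)$ is a Riesz--Fisher sequence in $L^2(I)$. Here one uses the interpolation counterpart of the above characterisation: a uniformly discrete $\Lambda\subset\R$ is stable interpolating for $\PW_I$ provided that for every $\Lambda'\in W(\Lambda)$ the system $E(\Lambda')=\{e_{-\mu}|_I\from\mu\in\Lambda'\}$ is $\ell^2$-independent in $L^2(I)$, i.e.\ $\sum_{\mu\in\Lambda'}c_\mu e_{-\mu}|_I=0$ with $c\in\ell^2(\Lambda')$ forces $c=0$. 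Now $D^+(\Lambda')\le D^+(\Lambda)<1$, and a density/dimension count dual to the uniqueness theorem of Part~(i) shows that an exponential system whose frequency set has upper uniform density below~$\abs I$ is $\ell^2$-independent on~$I$: for instance, if the frequencies form a lattice the putative relation would give a periodic function of period $<\abs I$ vanishing on~$I$, hence vanishing identically, and the general case follows by the same bookkeeping. Thus $E(\Lambda')$ is $\ell^2$-independent for every $\Lambda'\in W(\Lambda)$, and~$\Lambda$ is stable interpolating. Undoing the scaling gives~(ii).

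\emph{Where the difficulty lies.} The soft part is the verification of the uniqueness, resp.\ independence, property over the orbit closure: pure density bookkeeping plus the Jensen-formula estimate. The hard part is the forward implication in Beurling's two characterisations, namely that a purely qualitative property holding uniformly across $W(\Lambda)$ forces the quantitative frame, resp.\ Riesz, bounds for~$\Lambda$ itself. One proves this by contradiction via a normal-families/compactness argument, and the delicate point there is to prevent the $L^2$-, resp.\ $\ell^2$-, mass of a near-extremal sequence from escaping to infinity in the weak limit --- equivalently, one needs that a failure of stable sampling, resp.\ interpolation, is witnessed ``uniformly in space'' by a suitable translate of~$\Lambda$. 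All remaining steps are routine.
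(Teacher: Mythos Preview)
The paper does not prove Fact~\ref{fact:1d}. It is stated as a classical result with the attribution ``Ingham, Beurling and Kahane'' and a reference to \cite[Thm.~4.3, Thm.~4.5]{mm10}; the \qed\ immediately following the statement signals that no proof is given. So there is no ``paper's own proof'' to compare against --- your sketch stands on its own as an outline of the literature result.

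That said, a few comments on the sketch itself. Your treatment of~(i) is a faithful outline of Beurling's approach: the reduction, the identification of Banach and Beurling densities, the compactness characterisation via the orbit closure~$W(\Lambda)$, and the Jensen-type zero-density bound are all correct and are exactly the ingredients one finds in the sources cited in~\cite{mm10}. Your treatment of~(ii), however, is thinner. The ``$\ell^2$-independence over the orbit closure'' characterisation you invoke is not the way Kahane's theorem is usually proved, and your justification of the independence step (``a density/dimension count dual to the uniqueness theorem \ldots\ for instance, if the frequencies form a lattice'') is an illustration rather than an argument; the lattice case does not obviously propagate to general~$\Lambda'$ with $D^+(\Lambda')<1$. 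The standard route to~(ii) goes instead through an explicit construction of a biorthogonal system or through Ingham-type inequalities, and the references in~\cite{mm10} follow that line. If you want to keep the orbit-closure framing for~(ii), you would need to state and justify the interpolation analogue of Beurling's characterisation more carefully than you do here.

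Since the paper treats Fact~\ref{fact:1d} as a black box, your outline is already more than what is required; but if you intend it as a self-contained sketch, Part~(ii) needs either a sharper statement of the criterion you are using or a pointer to where it is proved.
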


For so-called simple model sets in Euclidean space, a corresponding result has been proved by Matei and Meyer \cite[Thm.~3.2]{mm10}. For general LCA groups, sampling and interpolation sets arbitrarily close to the critical density  have been constructed in \cite{AAC15}. For simple model sets in general LCA groups, an extension of \cite{mm10} has been suggested by Agora et al.~\cite[Thm.~6]{AACB16}. As emphasized in \cite{KL11, GL16}, these  results rely on a duality between stable sampling and stable interpolation for model sets having Euclidean internal space. This duality allows to transfer the general problem to the line, where Fact~\ref{fact:1d} can be invoked.

\smallskip

In  Theorem~\ref{thm:main}, we will formulate the sampling and interpolation duality for model sets in the framework of LCA groups, where we use Fourier analysis of unbounded complex Radon measures as developed by Argabright and de Lamadrid \cite{ARMA1}. This setting is well suited, since integrable Paley--Wiener functions are test functions for transformable measures. In our proof, we follow the constructions of Matei and Meyer but argue without Bruhat-Schwartz functions, which are used in \cite{AACB16}.  A central part of the proof relies on mathematical diffraction theory as described in e.\,g.~\cite{MoSt, RS15}, see also \cite{RS17} for a recent review. In comparison to previous results, our approach yields additional structural insight, which results in a refined version of the duality theorem in a more general setting. See Section~\ref{sec:statedis} for further discussion.

\smallskip

As an application, we obtain the following version of the stable sampling and stable interpolation theorem for simple model sets. Cut-and-project schemes and model sets will be defined in the following section. Note that the model sets in the theorem below have a Banach density.

\begin{theorem}[Stable sampling and stable interpolation for simple model sets]\label{cor:SIn1}
Let  $(G,H,\cL)$ be a complete cut-and-project scheme, where~$G$ is a $\sigma$-compact LCA group and where $H=\mathbb R$.
Let $I\subset H$ be a bounded nonempty interval and let $\Lambda_I\subset G$ be the associated model set.  Assume that $K\subset \widehat G$ is relatively compact and measurable.  Then the following hold.
  \begin{enumerate}
 \item[(i)] If $\mathcal D(\Lambda_I)>\theta_{\widehat G}(\cl K)$, then~$\Lambda_{I}$ is stable sampling for~$\PW_K$.

   \item[(ii)] If $\mathcal D(\Lambda_I)<\theta_{\widehat G}(\inn K)$, then~$\Lambda_I$ is stable interpolating for~$\PW_{K}$.
  \end{enumerate}
\end{theorem}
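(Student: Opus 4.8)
The plan is to reduce both assertions, by means of the model set sampling/interpolation duality of Theorem~\ref{thm:main}, to the one-dimensional Fact~\ref{fact:1d}. Since the internal group is $H=\mathbb R$, passing from $(G,H,\cL)$ to the annihilator lattice $\cL^\perp\subset\widehat G\times\widehat H$ and interchanging the roles of physical and internal space again yields a complete cut-and-project scheme $(\widehat H,\widehat G,\cL^\perp)$, whose physical group $\widehat H$ is (isomorphic to) $\mathbb R$. For relatively compact measurable $W\subset\widehat G$ let $\Gamma_W\subset\widehat H$ denote the model set it defines in this dual scheme; recall that $\Gamma_W$ is uniformly discrete, and that it has a Banach density whenever $\theta_{\widehat G}(\partial W)=0$. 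Theorem~\ref{thm:main} will assert, for this dual pair of schemes, that $\Lambda_I$ is stable sampling for $\PW_W$ if and only if $\Gamma_W$ is stable interpolating for $\PW_I$, and that $\Lambda_I$ is stable interpolating for $\PW_W$ if and only if $\Gamma_W$ is stable sampling for $\PW_I$, where on the right $I$ is regarded, via $\widehat{\widehat H}\cong H$, as a bounded interval.

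Before invoking the duality I would pass to Riemann measurable bands, and this is exactly where the hypotheses are phrased with $\cl K$ and $\inn K$. For (i), since $\theta_{\widehat G}(\cl K)<\mathcal D(\Lambda_I)$, outer regularity of $\theta_{\widehat G}$ together with a Urysohn-type smoothing produces a compact \emph{Riemann measurable} set $K'$ with $K\subset\cl K\subset K'$ and $\theta_{\widehat G}(K')<\mathcal D(\Lambda_I)$; since $K\subset K'$ we have $\PW_K\subset\PW_{K'}$, so the sampling inequalities \eqref{eq:stablesampling} for $\PW_{K'}$ imply those for $\PW_K$, and it suffices to treat $\PW_{K'}$. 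For (ii), since $\theta_{\widehat G}(\inn K)>\mathcal D(\Lambda_I)$, inner regularity and a smoothing give a compact Riemann measurable $K''$ with $K''\subset\inn K\subset K$ and $\theta_{\widehat G}(K'')>\mathcal D(\Lambda_I)$; here $K''\subset K$ forces $\ifu{K''}\le\ifu K$, so the lower inequality in \eqref{eq:si} for $\PW_{K''}$ implies the lower inequality for $\PW_K$, while the upper inequality in \eqref{eq:si} for $\PW_K$ holds for every uniformly discrete set by Lemma~\ref{lem:iub}; hence it suffices to show that $\Lambda_I$ is stable interpolating for $\PW_{K''}$.

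The remaining input is a density count. By Fact~\ref{thm:df2}, $\mathcal D(\Lambda_I)=\abs{I}/\mathrm{covol}(\cL)$ and $\mathcal D(\Gamma_W)=\theta_{\widehat G}(W)/\mathrm{covol}(\cL^\perp)$ for Riemann measurable $W$, where $\mathrm{covol}$ denotes the Haar measure of a fundamental domain; and the Poisson summation formula for lattices (with Haar measures normalised so that Fourier inversion holds) gives $\mathrm{covol}(\cL)\,\mathrm{covol}(\cL^\perp)=1$. Hence $\mathcal D(\Gamma_{K'})=\theta_{\widehat G}(K')\,\mathrm{covol}(\cL)<\abs{I}$ and $\mathcal D(\Gamma_{K''})=\theta_{\widehat G}(K'')\,\mathrm{covol}(\cL)>\abs{I}$. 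Now invoke the classical line result: by Fact~\ref{fact:1d}(ii) the uniformly discrete set $\Gamma_{K'}\subset\widehat H\cong\mathbb R$, having upper Banach density $<\abs{I}$, is stable interpolating for $\PW_I$; by Fact~\ref{fact:1d}(i) the set $\Gamma_{K''}\subset\widehat H\cong\mathbb R$, having lower Banach density $>\abs{I}$, is stable sampling for $\PW_I$. Feeding these back through the equivalences of Theorem~\ref{thm:main} gives that $\Lambda_I$ is stable sampling for $\PW_{K'}$, hence for $\PW_K$, which proves (i); and that $\Lambda_I$ is stable interpolating for $\PW_{K''}$, hence, by the reduction above, for $\PW_K$, which proves (ii).

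The entire weight of the argument rests on Theorem~\ref{thm:main} and on its clean interface with the construction above: one must match the windows of $\Lambda_I$ with the bands of $\Gamma_W$ and vice versa, keep the Haar normalisations coherent so that $\mathrm{covol}(\cL)\,\mathrm{covol}(\cL^\perp)=1$ holds on the nose and the one-dimensional densities come out with the right constant, and verify that the hypotheses of the duality theorem — in particular Riemann measurability of $K'$ resp.\ $K''$ — are genuinely met. The regularisation and transfer steps are elementary; the substantial obstacle, carried out elsewhere in the paper, is the proof of the duality itself, via Fourier analysis of the unbounded Radon measures attached to $\Lambda_I$ and to $\Gamma_W$ and the Poisson summation formula relating them.
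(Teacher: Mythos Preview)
Your overall strategy is exactly the paper's: transfer to the line via the duality Theorem~\ref{thm:main} and invoke Fact~\ref{fact:1d}. The gap is that you treat Theorem~\ref{thm:main} as a clean biconditional between $\PW$-spaces. It is not. Part~(ii) says only that if $_{K}\Lambda$ is stable interpolating for $\PW_W$ then $\Lambda_{\cl W}$ is stable sampling for $\CS_K$ --- note the closure on $W$ and the space $\CS_K$ rather than $\PW_K$. Part~(i') requires as input stable sampling for $\CS_{W'}$ where $W'$ is a genuine \emph{neighbourhood} of $W$, not just sampling for $\PW_W$. Riemann measurability is not a hypothesis of Theorem~\ref{thm:main}; you have misattributed it there.

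These asymmetries force two repairs you omit. For~(i): after Fact~\ref{fact:1d}(ii) yields that $_{K'}\Lambda$ is stable interpolating for $\PW_I$, Theorem~\ref{thm:main}(ii) only gives that $\Lambda_{\cl I}$ is stable sampling for $\CS_{K'}$. The paper first shrinks $I$ to a compact subinterval $I'\subset I$ with $|I'|>\cd^+(_{K'}\Lambda)$, so that $\cl{I'}=I'$; then uses Proposition~\ref{prop:denseinL2} (this is where Riemann measurability of $K'$ is actually needed) to upgrade $\CS_{K'}$-sampling to $\PW_{K'}$-sampling; finally $\Lambda_{I'}\subset\Lambda_I$ passes the lower bound up. For~(ii): to invoke (i') you need $_{K}\Lambda$ to be stable sampling for $\CS_{I'}$ with $I'$ a neighbourhood of $I$. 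The paper enlarges $I$ to an interval $I'\supsetneq I$ with $\dens(\cL)\,|I'|<\theta_{\widehat G}(\inn K)$, so that Fact~\ref{thm:df2} gives $|I'|<\cd^-(_K\Lambda)$; then Fact~\ref{fact:1d}(i) gives sampling for $\PW_{I'}\supset\CS_{I'}$, and (i') applies directly with the original $K$ --- no passage to a Riemann measurable $K''$ is needed, since the lower bound in Fact~\ref{thm:df2} already uses only $\theta_{\widehat G}(\inn K)$. Your detour through $K''$ is harmless but still requires the enlargement of $I$, which you omit.
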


\begin{remark}\leavevmode
\begin{itemize}
\item[(i)] Let us compare the above result to \cite[Thm.~3.2]{mm10} and to \cite[Thm.~6]{AACB16}. In those references, it is assumed that~$K$ is compact, and for~$(ii)$ it is assumed in addition that~$K$ is Riemann measurable.  However these assumptions are not necessary, as may be seen by approximation arguments using regularity of the Haar measure.
\item[(ii)] We can slightly relax the completeness assumption on the cut-and-project scheme, see Remark~\ref{rem:d1} and the proof of Theorem~\ref{cor:SIn1}: From the four projection assumptions in the complete cut-and-project scheme $(G,H,\cL)$ described in Definition~\ref{def:cps}, the proof of (i) does not use that~$\cL$ projects densely to~$H$. The proof of (ii) only uses that~$\cL$ projects densely to~$G$ and injectively to~$H$.
\item[(iii)] The works \cite{KL11} and \cite{GL16} construct model sets~$\Lambda_I$ in Euclidean space which are simultaneously stable sampling and stable interpolating. This is possible if~$\Lambda_I$ has critical Banach density $\theta_{\widehat G}(K)$, and  if~$K$ is Riemann measurable  and a so-called  bounded remainder set. In particular in \cite[Lem.~2.1]{KL11} and \cite[Thm.~3.1]{GL16}, the arguments leading to \cite[Thm.~3.2]{mm10} are reviewed.
\end{itemize}
\end{remark}

In fact, our methods also allow to prove stable sampling and stable interpolation for general weak model sets sufficiently far away from the critical density. This will be done in the first part of the article, see Proposition~\ref{lem:slb} and Proposition~\ref{lem:ilbms}. That part will also serve as a preparation for the second part of the article, as some of the corresponding arguments will reappear in the proof of the duality theorem.

\medskip

\noindent \textit{Note added in proof:}
After submitting this manuscript, we became aware of recent work by Meyer  \cite{me18}, where he reviews his constructions for simple model sets in Euclidean space and gives an alternative proof of  \cite[Thm.~3.2]{mm10}, again by reduction to the line. Meyer's proof uses a duality principle from \cite{me73}. It differs from the Duality Theorem~\ref{thm:main}, which is based on square integrable Paley-Wiener functions.

\subsection{Outline of the article} In Section~\ref{mslcs}, we review weighted model combs and introduce our notation. 
Section~\ref{sec:sims} is devoted to a proof of stable interpolation for weak model sets away from the critical density. 
Section~\ref{sec:samp} is devoted to a proof of stable sampling for weak model sets away from the critical density. 
Although we discuss in Section~\ref{sec:cdlb} that our results are not optimal, we believe that our techniques, which presently rely on standard estimates, may be substantially refined. Section~\ref{sec:densf} discusses density formulae for model sets. This is a preparation for the proof of the duality theorem, which uses a density formula with smooth averaging functions instead of van Hove sequences. Section~\ref{sec:dualityms} contains a proof of the duality theorem for stable sampling and stable interpolation. We conclude that section with a proof of Theorem~\ref{cor:SIn1}. The last sections collect some facts about Paley--Wiener spaces, van Hove sequences and Banach densities.

\subsection{Acknowledgments}
The results of this article emerged from discussions during mutual visits of the authors at TU Chemnitz, FAU Erlangen-N\"urnberg and TU Dortmund. The authors are grateful for support by their faculties.
CS would like to thank Thomas Kalmes and Albrecht Seelmann for enlightening and fruitful discussions.
CR would like to thank the participants of the conference \textit{Model sets and Aperiodic Order} in Durham in September 2018, where parts of the above results were presented, for inspiring discussions. We thank the referees for very helpful comments and suggestions.

\section{Weighted model sets}\label{mslcs}

In order to set up our notation, we recall relevant notions related to model sets, following the monograph \cite{BG2} where possible.

\subsection{Cut-and-project schemes and weak model sets}

\begin{defi}[Cut-and-project scheme]\label{def:cps}
  Let $G,H$ be LCA groups, and let $\cL\subset G\times H$ be a lattice, i.\,e., a discrete co-compact subgroup in $G\times H$.  We then call $(G,H,\cL)$ a \emph{bare cut-and-project scheme}. If~$\cL$ projects injectively to~$G$ and densely to~$H$, then $(G,H,\cL)$ is called a \emph{cut-and-project scheme}. If~$\cL$ projects injectively and densely to both~$G$ and~$H$, we call $(G,H,\cL)$ a \emph{complete cut-and-project scheme}.
\end{defi}

\begin{remark}
For a (bare) cut-and-project scheme $(G,H,\cL)$, we will denote by $\pi_G\from G\times H\to G$ and $\pi_H\from G\times H\to H$ the canonical projections.
Denseness of $\pi_H(\cL)$ can always be obtained by passing from~$H$ to the LCA group $H'=\cl{\pi_H(\cL)}$. Injectivity of $\pi_G|_\cL$ allows to identify~$\cL$ and $\pi_G(\cL)$. This identification will repeatedly be used below.
The name \textit{complete cut-and-project scheme} appears in \cite{AACB16}, where also some structure theory is developed.
\end{remark}

Model sets in~$G$ are obtained by ``cutting from~$\cL$ all points inside some strip parallel to~$G$'' and projecting them to~$G$. An analogous construction can be used to obtain point sets in~$H$.
This is the content of the following definition.

\begin{defi}[Weak model set]
Assume that $(G,H,\cL)$ is a bare cut-and-project scheme. Take a \emph{window} $W\subset H$ and consider the \emph{projection set} $\Lambda_W\subset G$ defined by
  \begin{equation*}
    \Lambda_W=\pi_G(\cL\cap (G\times W)) \,.
  \end{equation*}
If $(G,H,\cL)$ is a cut-and-project scheme, then

\begin{itemize}

\item[(i)] $\Lambda_W$ is called a \emph{weak model set} if~$W$ is relatively compact.

\item[(ii)] $\Lambda_W$ is called a \emph{model set} if~$\Lambda_W$ is a weak model set and if~$W$ has nonempty interior.

\item[(iii)]  $\Lambda_W$ is called a \emph{regular model set} if~$\Lambda_W$ is a model set and if~$W$ is Riemann measurable, i.\,e.,  $\theta_H(\partial W)=0$, where~$\theta_H$ is a Haar measure on~$H$.
\end{itemize}
We may take a window $V\subset G$ and consider the projection set $_V\Lambda\subset H$ defined by
  \begin{equation*}
   _V \Lambda=\pi_H(\cL\cap (V\times H))\,.
  \end{equation*}
If $(G,H,\cL)$ is a complete cut-and-project scheme, we call~$_V\Lambda$ a (weak, regular) model set depending on the corresponding properties of~$V$.
\end{defi}

\begin{remark}[Elementary properties of projection sets]
Any weak model set is uniformly discrete. More generally,
any projection set with relatively compact window is uniformly discrete, compare to \cite[Prop.~2.6(i)]{RVM3}. Any model set is relatively dense. The proof of the latter statement uses that~$\cL$ projects densely to~$H$, compare the proof of \cite[Prop.~2.6(i)]{RVM3}. For any regular model set in a $\sigma$-compact group~$G$, all of its pattern frequencies exist as a limit which is uniform in shifts of the chosen averaging sequence, see also below. Projection sets~$\Lambda$ with relatively compact window are of \emph{finite local complexity}, i.\,e., we have $\Lambda-\Lambda\subset \Lambda+F$, where~$F$ is some finite set. One also says that~$\Lambda$ is an \emph{almost lattice}.
\end{remark}

\begin{remark}[Star map]
If $(G,H,\cL)$ is a bare cut-and-project scheme and if~$\cL$ projects injectively to~$G$, then the \emph{star map}
\begin{equation*}
    s\from\pi_G(\cL)\to\pi_H(\cL)
    \qtextq{such that}
    (g,s(g))\in\cL
\end{equation*}
is well defined. Hence we can write $\Lambda_W=\{g\in\pi_G(\cL):s(g)\in W\}$ in that case.
If $(G,H,\cL)$ is a complete cut-and-project scheme, then the star map is a bijection. Hence in that case we may write $_V \Lambda=\{h\in\pi_H(\cL):s^{-1}(h)\in V\}$.
\end{remark}

\subsection{Dual cut-and-project scheme}\label{sec:dual}

We recall duality for cut-and-project schemes, compare \cite{RVM3}.
Denote by $\cL_0\subset \hat G\times\hat H$ the annihilator of~$\mathcal L$, i.\,e.,
 \begin{equation*}
    \cL_0:=\{\ell_0\in\hat G\times\hat H:
    \ell_0(\ell)=1\text{ for all } \ell\in\cL\}\, .
\end{equation*}
By Pontryagin duality,  is a~$\cL_0$ is lattice in $\widehat G\times \widehat H$, $\pi_G\rstr\cL$ is one-to-one iff $\pi_{\hat H}(\cL_0)$ is dense in~$\hat H$, and $\pi_H(\cL)$ is dense in~$H$ iff $\pi_{\hat G}\rstr{\cL_0}$ is one-to-one.
Hence we can define

\begin{lemma}[Dual cut-and-project scheme]
  Let $(G,H,\cL)$ be a bare cut-and-project scheme. Then $(\hat G,\hat H,\cL_0)$ is a bare cut-and-project scheme, which we call  the dual cut-and-project scheme.
  If  $(G,H,\cL)$ is a (complete) cut-and-project scheme, then $(\hat G,\hat H,\cL_0)$ is a (complete) cut-and-project scheme as well.
  \qed
\end{lemma}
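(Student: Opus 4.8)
The statement to prove is the Dual cut-and-project scheme lemma: if $(G,H,\cL)$ is a bare cut-and-project scheme, then so is $(\widehat G,\widehat H,\cL_0)$, and the (complete) cut-and-project properties transfer as well. The key tool is the annihilator duality between subgroups of an LCA group $X = G\times H$ and subgroups of its dual $\widehat X \cong \widehat G\times \widehat H$, together with the facts already recalled in the text immediately preceding the statement. The plan is therefore to verify three things in turn: (1) $\cL_0$ is a lattice in $\widehat G\times\widehat H$; (2) the injectivity/density dictionary between $\cL$ and $\cL_0$; (3) assembling (1)--(2) into the claimed implications.

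For (1), I would use the standard Pontryagin duality fact that the annihilator of a discrete co-compact subgroup (lattice) $\cL$ in an LCA group $X$ is again a lattice in $\widehat X$: indeed $\widehat{X/\cL} \cong \cL_0$ and $\widehat{\,\widehat X / \cL_0\,} \cong \cL$ canonically; since $\cL$ is discrete, $X/\cL$ is compact, hence $\cL_0 \cong \widehat{X/\cL}$ is discrete; since $\cL$ is co-compact, $X/\cL$ is compact so $\cL$ is itself (as a discrete group) such that $\widehat{X/\cL}$ is discrete and, dually, $\widehat X/\cL_0 \cong \widehat{\cL}$ is compact because $\cL$ is discrete, giving co-compactness of $\cL_0$. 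Under the identification $\widehat{G\times H} = \widehat G\times\widehat H$ this makes $\cL_0$ a lattice in $\widehat G\times\widehat H$, so $(\widehat G,\widehat H,\cL_0)$ is a bare cut-and-project scheme.

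For (2), the text has already stated the dictionary: $\pi_G|_\cL$ is one-to-one iff $\pi_{\widehat H}(\cL_0)$ is dense in $\widehat H$, and $\pi_H(\cL)$ is dense in $H$ iff $\pi_{\widehat G}|_{\cL_0}$ is one-to-one. I would briefly indicate why: $\pi_G|_\cL$ injective means $\cL\cap(\{e_G\}\times H) = \{e\}$, and $\pi_H(\cL)$ dense means the closure of $\pi_H(\cL)$ is all of $H$; passing to annihilators inside $\widehat G\times\widehat H$ and using that annihilator interchanges ``intersection with a closed subgroup'' with ``closure of a projection'' (equivalently, $(\cL\cap(\{e_G\}\times H))^\perp = \overline{\cL_0 + (\widehat G\times\{e_{\widehat H}\})}$ and taking projections), one reads off the two equivalences. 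Since the present lemma is only claiming the existence of the scheme and not re-deriving this dictionary, I would simply cite the sentences already in the excerpt (``By Pontryagin duality, $\cL_0$ is a lattice\ldots, $\pi_G|_\cL$ is one-to-one iff\ldots, and $\pi_H(\cL)$ is dense iff\ldots'') and the reference \cite{RVM3}.

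Finally, for (3): combining (1) and (2), if $(G,H,\cL)$ is a cut-and-project scheme then $\pi_G|_\cL$ is injective and $\pi_H(\cL)$ is dense, hence by the dictionary $\pi_{\widehat H}(\cL_0)$ is dense and $\pi_{\widehat G}|_{\cL_0}$ is injective; together with $\cL_0$ being a lattice, this says precisely that $(\widehat G,\widehat H,\cL_0)$ is a cut-and-project scheme. In the complete case one additionally has $\pi_H|_\cL$ injective and $\pi_G(\cL)$ dense; applying the same dictionary with the roles of the two factors swapped (i.e., using $\widehat{\widehat G\times\widehat H} = G\times H$ and the same annihilator correspondence in the other direction) yields $\pi_{\widehat G}(\cL_0)$ dense and $\pi_{\widehat H}|_{\cL_0}$ injective, so $(\widehat G,\widehat H,\cL_0)$ is complete. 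I do not expect a serious obstacle here; the only delicate point is being careful about the direction of the annihilator correspondence when swapping factors, and making sure the identification $\widehat{G\times H}\cong\widehat G\times\widehat H$ is used consistently so that $\pi_{\widehat G}$ and $\widehat{\pi_G}$ genuinely correspond. Since all of this is packaged in the two sentences already displayed before the statement, the actual write-up is essentially: cite those sentences, then spell out the four-way case check above in one short paragraph.
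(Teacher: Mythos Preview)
Your proposal is correct and matches the paper's approach exactly: the paper states the annihilator dictionary (``By Pontryagin duality, $\cL_0$ is a lattice in $\widehat G\times\widehat H$, $\pi_G|_\cL$ is one-to-one iff $\pi_{\widehat H}(\cL_0)$ is dense in $\widehat H$, and $\pi_H(\cL)$ is dense in $H$ iff $\pi_{\widehat G}|_{\cL_0}$ is one-to-one'') immediately before the lemma and then simply writes \qed, treating the lemma as an immediate consequence. Your write-up supplies precisely the details the paper leaves implicit, including the factor-swap for the complete case, and there is nothing to correct.
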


We write
  $\hat s\from\pi_{\hat G}(\cL_0)\to\pi_{\hat H}(\cL_0)$ for the star map in the dual cut-and-project scheme $(\hat G,\hat H,\cL_0)$.
In the following, given a relatively compact subset $K\subset \widehat G$, we will frequently consider the projection set
\begin{displaymath}
_K \Lambda=\pi_{\widehat H}\left(\cL_0\cap(K\times \widehat H)\right) \, .
\end{displaymath}

\subsection{Weighted model combs and their transforms}

Instead of working with point sets in~$G$, it is sometimes advantageous to consider associated weighted Dirac combs. Let us denote by~$\mathcal M(G)$ the space of complex Radon measures on~$G$. Assume that $(G,H,\cL)$ is a bare cut-and-project scheme and that $h:H\to\mathbb C$ is a bounded function such that
\begin{displaymath}
\omega_h=\sum_{(x,y)\in\cL} h(y)\delta_x
\end{displaymath}
is a complex Radon measure on~$G$, i.\,e., we have $\omega_h\in \mathcal M(G)$. Then~$\omega_h$ is called a \emph{weighted model comb}, and~$h$ is called its \emph{weight function}. Examples of weight functions are $h\in C_c(H)$ and $h=\ifu W$ for relatively compact $W\subset H$, see the proof of Lemma 4.8 in \cite{RS17}. We have $\Lambda_W=\supp(\omega_{\ifu W})$, where~$\supp$ denotes the measure-theoretic support. Another example is $h\in C_0(H)$ of sufficiently fast decay \cite{LR}.

We will also consider weighted model combs $_g\omega=\sum_{(x,y)\in\cL} g(x)\delta_y$ for bounded $g:G\to\mathbb C$ such that $_g\omega\in \mathcal M(H)$.  For relatively compact $V\subset G$, we have $_V \Lambda=\supp(_{\ifu V}\omega)$.

Consider the associated dual cut-and-project scheme $(\widehat G, \widehat H, \cL_0)$. As no confusion may arise, we will denote weighted model combs in that scheme by the same symbol, i.\,e., we write
\begin{displaymath}
\omega_\psi=\sum_{(\chi,\eta)\in\cL_0} \psi(\eta)\delta_\chi
\end{displaymath}
for bounded $\psi:\widehat H\to\mathbb C$. If $\psi=\ifu \Omega$ for $\Omega\subset \widehat H$, we have $\Lambda_\Omega=\supp(\omega_{\ifu \Omega})$.

Recall that $\mu\in \mathcal M(G)$ is \emph{translation bounded} if $\sup_{t\in G}\abs\mu(K+t)<\infty$ for every compact $K\subset G$, where~$\abs\mu$ is the total variation measure of~$\mu$, see e.\,g.~\cite[Ch.~1]{ARMA1}. Let us denote by $\mathcal M^\infty(G)$ the space of translation bounded complex Radon measures on~$G$. Observing that  $\abs{\omega_h}=\omega_{\abs h}$, we conclude $\omega_h\in\mathcal M^\infty(G)$ for $h\in C_c(H)$ or for $h=\ifu W$ with relatively compact $W\subset H$.
We write $L^1(\mu)$ for the space of $\abs\mu$-integrable functions.

\medskip

As weighted model combs are complex Radon measures, one might apply Fourier analysis of complex Radon measures as developed by Argabright and de Lamadrid \cite{ARMA1, MoSt, RS17, RS15}. Weighted model sets satisfy a Poisson Summation Formula for a large class of test functions.  Consider the function spaces
\begin{displaymath}
KL(G)=\{f\in C_c(G): \widehat f\in L^1(\widehat G)\}, \qquad
LK(G)=\{f\in L^1(G)\cap  C_0(G): \widehat f\in C_c(\widehat G)\}
\end{displaymath}
and note $\CS_K\subset LK(G)$ for all measurable relatively compact $K\subset \widehat G$. We have the following result.

\begin{lemma}[Transform of weighted model combs]\label{lem:FTWMS}
Let $(G,H,\cL)$ be a bare cut-and-project scheme. Assume that $h\in KL(H)\cup LK(H)$. Then both~$\omega_h$ and~$\omega_{\widecheck h}$ are translation bounded complex Radon measures. Moreover the Poisson Summation Formula
\begin{equation}\label{form:PSF}
\omega_h(g)=\dens(\cL)\cdot \omega_{\widecheck h}(\widecheck g) 
\end{equation}
holds for every $g\in KL(G)\cup LK(G)$.
\end{lemma}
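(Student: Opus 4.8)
The strategy is to reduce the Poisson Summation Formula for the weighted model comb $\omega_h$ to the classical Poisson Summation Formula for the lattice $\cL\subset G\times H$, exploiting the product structure of the Haar measure on $G\times H$ and the Fubini-type factorization of the test functions involved. The first step is to observe that $\cL$ is a genuine lattice in the LCA group $G\times H$, so the classical Poisson Summation Formula applies: for suitable $F\in L^1(G\times H)$ one has $\dens(\cL)^{-1}\sum_{\ell\in\cL}F(\ell)=\sum_{\ell_0\in\cL_0}\widehat F(\ell_0)$, where the Haar measures on $\hat G\times\hat H$ and on $\cL_0$ are normalized compatibly. One should make precise which class of $F$ this holds for; the safe choice is $F$ continuous, $L^1$, with $\widehat F\in L^1(\hat G\times\hat H)$, and with both sides absolutely convergent — a standard fact, or else obtainable from the version for $G$ and $H$ separately combined with the identification $\widehat{G\times H}=\hat G\times\hat H$.

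The second step is to produce, for a given $g\in KL(G)\cup LK(G)$ and $h\in KL(H)\cup LK(H)$, a function $F$ on $G\times H$ to which the classical formula applies and which evaluates the two sides of \eqref{form:PSF}. The natural candidate is the tensor product $F=g\otimes h$, i.e. $F(x,y)=g(x)h(y)$. Then $\sum_{\ell\in\cL}F(\ell)=\sum_{(x,y)\in\cL}g(x)h(y)=\omega_h(g)$, using the definition of $\omega_h$ and interpreting $\omega_h(g)$ as $\int g\dd\omega_h$. On the dual side, $\widehat F=\widehat g\otimes\widehat h$ on $\hat G\times\hat H$, so $\sum_{\ell_0\in\cL_0}\widehat F(\ell_0)=\sum_{(\chi,\eta)\in\cL_0}\widehat g(\chi)\widehat h(\eta)$. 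Here one must be careful with the Fourier-transform conventions of the paper: $\omega_{\widecheck h}(\widecheck g)=\sum_{(\chi,\eta)\in\cL_0}\widecheck g(\chi)\widecheck h(\eta)$, and since $\cL_0$ is a subgroup, summing $\widehat g\otimes\widehat h$ over $\cL_0$ equals summing $\widecheck g\otimes\widecheck h$ over $\cL_0$ (replace $\ell_0$ by $\ell_0^{-1}\in\cL_0$, using $\widehat g(\chi^{-1})=\widecheck g(\chi)$ and similarly for $h$). This gives the identity $\omega_h(g)=\dens(\cL)\cdot\omega_{\widecheck h}(\widecheck g)$.

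The third step is to verify the hypotheses of the classical formula for $F=g\otimes h$ and, separately, that $\omega_h,\omega_{\widecheck h}\in\M^\infty$. For $h\in KL(H)$ one has $h\in C_c(H)$, hence $\omega_h$ is translation bounded by the earlier observation $\abs{\omega_h}=\omega_{\abs h}$; for $h\in LK(H)$ one has $\widehat h\in C_c(\hat H)$, so $\widecheck h=$ (a reflection of) $\widehat h\in C_c(\hat H)$ — wait, more precisely $h\in LK(H)$ means $h\in L^1\cap C_0$ with $\widehat h\in C_c$, so $h=\widecheck{\widehat h}$ and $\widecheck h\in C_0(\hat H)\cap L^1$ with compactly supported transform, i.e. $\widecheck h$ plays the role of an $LK(\hat H)$ function, again giving $\omega_{\widecheck h}$ translation bounded; the symmetric statement handles $h\in KL(H)\Rightarrow\widecheck h\in LK(\hat H)$. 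Similarly $g\otimes h$ is continuous and $L^1$, and its transform $\widehat g\otimes\widehat h$ is $L^1$ on $\hat G\times\hat H$ in all four combinations $KL/LK$ for each factor (one factor compactly supported and the other $L^1$ suffices by Hölder on the compact support, exactly as in Remark (ii) on $\PW_K$). Absolute convergence of both sides then follows because $F$ and $\widehat F$ lie in the Rieffel/Wiener-type algebra for which termwise summation over a lattice converges — alternatively, since one of $g,\widehat g$ and one of $h,\widehat h$ has compact support in each case, only finitely many or summably many lattice points contribute, and translation boundedness of $\omega_h,\omega_{\widecheck h}$ controls the rest.

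The main obstacle I anticipate is purely bookkeeping: pinning down the normalization constant $\dens(\cL)$ and the interplay of the three or four Haar measures (on $G$, $H$, $G\times H$, and the counting measure on $\cL$, plus their duals), so that the constant in \eqref{form:PSF} comes out exactly as $\dens(\cL)$ rather than $\dens(\cL)$ times some artifact of the inversion-theorem normalization fixed at the start of the paper. A secondary subtlety is justifying that the tensor product $g\otimes h$ genuinely lands in the class covered by the classical Poisson formula in \emph{all four} cases $KL(G)\cup LK(G)$ times $KL(H)\cup LK(H)$, including the mixed cases (e.g. $g\in KL(G)$, $h\in LK(H)$), where neither $F$ nor $\widehat F$ is compactly supported but each is a tensor of one compactly supported factor with one merely $L^1$-with-$L^1$-transform factor; here one invokes that such tensors still lie in the Fourier algebra $\mathcal A(G\times H)\cap L^1$ with absolutely convergent lattice sums, which is the technical heart and should be cited from \cite{ARMA1} or \cite{RS15} rather than reproved.
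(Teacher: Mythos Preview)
Your plan is sound but takes a different route from the paper. You propose to handle all four combinations $KL/LK\times KL/LK$ in one stroke by applying the classical lattice Poisson Summation Formula on $G\times H$ to the tensor product $F=g\otimes h$. The paper instead cites only the case $g\in KL(G)$, $h\in KL(H)$ from \cite[Thm.~4.10(ii)]{RS15} and then bootstraps via the transformability machinery of Argabright--de Lamadrid: for $h\in KL(H)$ the measure $\omega_h$ is \emph{twice} transformable by \cite[Thm.~4.12]{RS15}, so for $g\in LK(G)$ one writes $\omega_h(g)=\widehat{\widehat{\omega_{h^\dag}}}(g)=\widehat{\omega_{h^\dag}}(\widehat g)$, notes that $\widehat g\in KL(\widehat G)$, and lands back in the already-known case on the dual side; the cases with $h\in LK(H)$ then follow by the symmetric argument applied to $\omega_{\widecheck h}$. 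What your approach buys is conceptual directness; what the paper's buys is brevity, because the mixed cases you rightly flag as ``the technical heart'' are genuine work in your scheme --- for $g\in KL(G)$, $h\in LK(H)$ neither $g\otimes h$ nor its transform is compactly supported on $G\times H$, and the absolute convergence $\sum_{(x,y)\in\cL}|g(x)h(y)|<\infty$ is not a bare citation from \cite{ARMA1} or \cite{RS15} but amounts to integrating $|h|$ against a translation-bounded point measure on $H$, which does not follow from $h\in L^1\cap C_0$ alone. The paper's route absorbs exactly this difficulty into the transformability statements. (Minor slip: for $h\in LK(H)$ one has $\widecheck h\in C_c(\widehat H)$, hence $\widecheck h\in KL(\widehat H)$ rather than $LK(\widehat H)$; this actually makes translation boundedness of $\omega_{\widecheck h}$ immediate.)
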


\begin{remark}
Here~$\dens(\cL)$ is the reciprocal Haar measure of a measurable fundamental domain of~$\cL$. It coincides with the density of lattice points, evaluated on any van Hove sequence. Note that $\dens(\cL)\cdot \dens(\cL_0)=1$.
\end{remark}

\begin{proof}[Proof of Lemma~\ref{lem:FTWMS}]
Note that the Poisson Summation Formula~\eqref{form:PSF} holds for all $g\in KL(G)$ and $h\in KL(H)$ by \cite[Thm.~4.10~(ii)]{RS15}. In fact, the same formula also holds for $g\in LK(G)$ and $h\in KL(H)$. Indeed, as~$\omega_h$ is twice transformable by \cite[Thm.~4.12]{RS15} and $\widehat g\in KL(\widehat G)$ by the Fourier inversion theorem, we have
\begin{displaymath}
\omega_h(g)=\widehat{\widehat{\omega_{h^\dag}}}(g)=\widehat{\omega_{h^\dag}}(\widehat{g})=\dens(\cL)\cdot \omega_{\widecheck{h^\dag}}(\widehat{g})=\dens(\cL)\cdot\omega_{\widecheck h}(\widecheck g)\, ,
\end{displaymath}
where $h^\dag(y)=h(-y)$.
Here the first equation is double transformability of~$\omega_h$, and the second equation is transformability of~$\widehat{\omega_{h^\dag}}$. The statement for $h\in LK(H)$ now follows from the above arguments by double transformability of~$\omega_{\widecheck h}$.
\end{proof}

\subsection{Averages and van Hove sequences}

With respect to Fourier analysis, characteristic functions of van Hove sequences behave similarly to rectangular functions of increasing width in~$\mathbb R$.

\begin{lemma}\cite[Lem,~3.14]{RS15}\label{lem:rect}
Let~$G$ be a $\sigma$-compact LCA group and let $(A_n)_{n\in \mathbb N}$ be a van Hove sequence in~$G$. We then have pointwise
\begin{displaymath}
\lim_{n\to\infty} \frac{1}{\theta_G(A_n)} \widehat{\ifu {A_n}} \to \ifu {\{1\}} \,,
\end{displaymath}
where $1\in \widehat G$ denotes the identity character.
\qed
\end{lemma}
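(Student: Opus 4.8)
The plan is to prove the pointwise convergence $\theta_G(A_n)^{-1}\widehat{\ifu{A_n}}(\chi)\to \ifu{\{1\}}(\chi)$ by treating the two cases $\chi=1$ and $\chi\neq1$ separately. For $\chi=1$ the statement is immediate: $\widehat{\ifu{A_n}}(1)=\int_{A_n}\overline{1(x)}\,\d\theta_G(x)=\theta_G(A_n)$, so the normalized quantity equals $1$ for all $n$. The whole content is therefore in showing that for a \emph{fixed} non-trivial character $\chi\in\widehat G\setminus\{1\}$ one has $\theta_G(A_n)^{-1}\bigl|\widehat{\ifu{A_n}}(\chi)\bigr|=\theta_G(A_n)^{-1}\bigl|\int_{A_n}\overline{\chi(x)}\,\d\theta_G(x)\bigr|\to0$.

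The key idea is the standard van Hove averaging trick: exploit the near-translation-invariance of $\int_{A_n}\overline{\chi}$ under small shifts together with the fact that $\chi$, being non-trivial, is non-constant on any neighbourhood. Concretely, pick $x_0\in G$ with $\chi(x_0)\neq1$. Then
\begin{displaymath}
(1-\chi(x_0))\int_{A_n}\overline{\chi(x)}\,\d\theta_G(x)
=\int_{A_n}\overline{\chi(x)}\,\d\theta_G(x)-\int_{A_n}\overline{\chi(x-x_0)}\,\d\theta_G(x)
=\int_{A_n}\overline{\chi}-\int_{A_n+x_0}\overline{\chi}\,,
\end{displaymath}
using translation invariance of $\theta_G$ in the second integral. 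The right-hand side is an integral of the bounded function $\overline{\chi}$ (of modulus $1$) over the symmetric difference $A_n\,\triangle\,(A_n+x_0)$, hence bounded in absolute value by $\theta_G\bigl(A_n\triangle(A_n+x_0)\bigr)$. Now $A_n\triangle(A_n+x_0)$ is contained in a $K$-boundary-type neighbourhood of $\partial A_n$: choosing a compact zero neighbourhood $K$ containing $x_0$ and $-x_0$ (or just $\{e,x_0\}$ and using the definition of $\partial^K\!A_n$), one has $A_n\triangle(A_n+x_0)\subset \partial^{K'}\!A_n$ for a suitable compact $K'$, so $\theta_G\bigl(A_n\triangle(A_n+x_0)\bigr)\le \theta_G(\partial^{K'}\!A_n)$. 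Dividing by $\theta_G(A_n)$ and invoking the van Hove property gives $\theta_G(A_n)^{-1}\theta_G(\partial^{K'}\!A_n)\to0$, and since $|1-\chi(x_0)|>0$ is a fixed positive constant, we conclude $\theta_G(A_n)^{-1}\int_{A_n}\overline{\chi}\to0$, as desired.

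The main obstacle, and the only genuinely non-routine point, is the set-theoretic inclusion $A_n\triangle(A_n+x_0)\subset\partial^{K'}\!A_n$ for an appropriate compact $K'$ depending only on $x_0$ (not on $n$): one must check that a point lying in $A_n$ but not $A_n+x_0$, or vice versa, lies within a bounded distance of both $\cl{A_n}$ and $\cl{A_n^c}$, which follows by writing the translate condition in terms of $x_0\in K'$ and unwinding the definition $\partial^{K'}\!A=[(K'+\cl A)\cap\cl{A^c}]\cup[(K'+\cl{A^c})\cap\cl A]$. This is a short but slightly fiddly point-set argument. Everything else is bookkeeping: measurability of $\ifu{A_n}$ is clear, the Fourier transform is a genuine integral since $A_n$ is compact hence $\ifu{A_n}\in L^1(G)$, and no $\sigma$-compactness is needed beyond guaranteeing that van Hove sequences exist (which is given in the hypothesis). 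I would also remark that the convergence is not uniform in $\chi$, which is why the statement is phrased pointwise.
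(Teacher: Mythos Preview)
Your argument is correct; this is the standard van Hove averaging trick, and your verification of the boundary inclusion $A_n\triangle(A_n+x_0)\subset\partial^{K'}\!A_n$ with $K'=\{0,x_0\}$ goes through exactly as you outline. One cosmetic slip: the change of variables $y=x-x_0$ in $\int_{A_n}\overline{\chi(x-x_0)}\,\d\theta_G(x)$ gives $\int_{A_n-x_0}\overline{\chi}$, not $\int_{A_n+x_0}\overline{\chi}$; this does not affect the proof, since the symmetric difference $A_n\triangle(A_n-x_0)$ is handled by the same inclusion with $K'=\{0,-x_0\}$.

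As for comparison with the paper: there is nothing to compare against. The paper states the lemma as a fact imported from \cite[Lem.~3.14]{RS15} and marks it \qed\ without supplying an argument. Your proof is precisely the kind of direct, self-contained argument one would expect the cited reference to contain.
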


\section{Stable interpolation by model sets}\label{sec:sims}

\subsection{An interpolation upper bound}

As is well known, the interpolation upper bound holds for arbitrary uniformly discrete point sets. Here we give a short direct proof based on Dirac nets and Fourier analytic arguments. These will later reappear in the proof of the Duality Theorem~\ref{thm:main}. Recall that a \emph{Dirac net} in a general LCA group~$G$ is a net $(v_j)_{j\in J}$ in~$G$ such that $v_j\in C_c(G)$,  $v_j\ge 0$ and  $\int_{G} v_j\dd\theta_{G}=1$ for all~$j$. Furthermore the support of~$v_j$ shrinks, i.\,e., for every zero neighborhood $U\subset G$ there exists $j_0\in J$ such for all $j\ge j_0$ we have $\supp(v_j)\subset U$.   For the construction of such a net see e.\,g.~\cite[Lem.~1.6.5]{DE}. If the LCA group~$G$ is metrizable, we can work with Dirac sequences $(v_n)_{n\in \mathbb N}$ instead of Dirac nets.

\begin{lemma}[Upper bound for interpolation from uniformly discrete sets] \label{lem:iub}
Let~$G$ be any LCA group. Let $\Lambda\subset G$ be any uniformly discrete point set in~$G$. Consider any measurable relatively compact $K\subset  \widehat G$.
Then there exists a finite constant $B=B(\Lambda, K)$ such that
  \begin{displaymath}
      \norm{\ifu Kf_\varphi}_{L^2(\widehat G)}^2
    \le B\norm\varphi_{\ell^2(\Lambda)}^2
  \end{displaymath}
 for all finitely supported $\varphi\from\Lambda\to\C$.
\end{lemma}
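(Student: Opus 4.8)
The plan is to exploit uniform discreteness to split the estimate into a "diagonal" term and "off-diagonal" terms indexed by the difference set $\Lambda-\Lambda$, which is finitely locally complex-like in that only finitely many differences fall into any fixed compact set. Concretely, expand
\begin{displaymath}
\norm{\ifu K f_\varphi}_{L^2(\widehat G)}^2
= \int_K \Bigl| \sum_{\lambda\in\Lambda}\varphi(\lambda)\overline{\chi(\lambda)} \Bigr|^2 \dd\theta_{\widehat G}(\chi)
= \sum_{\lambda,\mu\in\Lambda} \varphi(\lambda)\overline{\varphi(\mu)}\, \widehat{\ifu K}(\mu-\lambda),
\end{displaymath}
where I use $\int_K \overline{\chi(\lambda)}\chi(\mu)\dd\theta_{\widehat G}(\chi)=\int \ifu K(\chi)\,\overline{\chi(\lambda-\mu)}\dd\theta_{\widehat G}(\chi)=\widehat{\ifu K}(\lambda-\mu)$, and note $\widehat{\ifu K}(\lambda-\mu)=\overline{\widehat{\ifu K}(\mu-\lambda)}$ since $\ifu K$ is real. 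Since $K$ is relatively compact, $\ifu K\in L^1(\widehat G)\cap L^2(\widehat G)$, so by the Fourier inversion argument in the opening remarks $g:=\widehat{\ifu K}=\widecheck{\ifu{\cl K}}$ (up to null sets) lies in $C_0(G)$; in particular $g$ is bounded and, more importantly, $g\in L^2(G)\cap C_0(G)$. The key quantitative input I want is summability of $g$ over any uniformly discrete set: I would show $\sup_{x\in G}\sum_{\nu\in\Lambda-\Lambda}|g(x+\nu)| =: C < \infty$.

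For that, let $B$ be a compact zero neighborhood witnessing uniform discreteness of $\Lambda$, and pick a smaller compact zero neighborhood $U$ with $U-U\subset B$, so that $\Lambda-\Lambda$ is itself uniformly discrete with respect to $U$ (each translate $x+U$ contains at most one point of $\Lambda-\Lambda$; this uses $\Lambda-\Lambda\subset\Lambda+F$ for finite $F$, or more directly that differences of a uniformly discrete set are uniformly discrete — which follows since $\Lambda$ has finite local complexity). Then for each $\nu\in\Lambda-\Lambda$, $|g(\nu)|\le \frac{1}{\theta_G(U)}\int_{\nu+U}\bigl(|g| * \text{(something)}\bigr)$ — more cleanly, since $g$ need not be continuous I would instead use $|g(\nu)|^2\le$ an averaged bound; but $g\in C_0(G)$ from the remark, so $g$ is genuinely continuous, and I can use a standard argument: $\int_{\nu+U}|g(y)|\dd\theta_G(y)\ge \theta_G(U)\inf_{\nu+U}|g|$ is the wrong direction, so instead bound $|g(\nu)|\le \sup_{\nu+U}|g|$ and observe $\sum_{\nu}\sup_{\nu+U}|g| \le \frac{1}{\theta_G(U)}\int_G (\sup\text{-envelope of }|g|)$, using disjointness of the translates $\nu+U$ — this requires $|g|$ times a bump to be integrable, which holds as $g\in L^2$ and one can absorb with Cauchy–Schwarz, or more simply majorize $|g|$ by the maximal function and use that $g\in L^1_{\mathrm{loc}}$ with $L^2$ decay. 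The cleanest route: replace $|g|$ by $G_U(x):=\sup_{u\in U}|g(x-u)|$, note $G_U\in L^1(G)$ is false in general but $G_U\in L^2$-ish; safest is to instead use $|g(\nu)|\le \theta_G(U)^{-1}\int_{\nu+U-U}|g(y)|\,\phi(y-\nu)\dd y$ with a fixed $\phi\in C_c$, $\phi\ge\ifu U$, giving $\sum_\nu|g(\nu)|\le \theta_G(U)^{-1}\int_G |g(y)| \sum_\nu\phi(y-\nu)\dd y \le \theta_G(U)^{-1}\|\phi\|_\infty (\text{bounded overlap})\,\|g\|_{L^1(\text{compact})}$ — and since $g=\widecheck{\ifu{\cl K}}$ may fail to be $L^1(G)$, I use $\|g\|_{L^2(G)}=\|\ifu{\cl K}\|_{L^2(\widehat G)}<\infty$ together with Cauchy–Schwarz over the uniformly-bounded number of $\nu$ per unit volume. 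The translation-invariance in $x$ is automatic since all bounds are taken over all $x\in G$.

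Granting $C<\infty$, the proof finishes by Schur's test / a Cauchy–Schwarz splitting: writing $\nu=\mu-\lambda$,
\begin{align*}
\Bigl|\sum_{\lambda,\mu}\varphi(\lambda)\overline{\varphi(\mu)}\, g(\mu-\lambda)\Bigr|
&\le \sum_{\lambda,\mu} |\varphi(\lambda)|\,|\varphi(\mu)|\,|g(\mu-\lambda)|\\
&\le \tfrac12\sum_{\lambda,\mu}\bigl(|\varphi(\lambda)|^2+|\varphi(\mu)|^2\bigr)|g(\mu-\lambda)|
\le \Bigl(\sup_{\lambda}\sum_{\mu}|g(\mu-\lambda)|\Bigr)\sum_\lambda|\varphi(\lambda)|^2,
\end{align*}
and $\sup_\lambda\sum_{\mu\in\Lambda}|g(\mu-\lambda)|\le \sup_{x\in G}\sum_{\nu\in\Lambda-\Lambda}|g(x+\nu)| = C$. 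Thus $B:=C$ works, and $B$ depends only on $\Lambda$ (through the uniform-discreteness constant and the finite-local-complexity structure) and on $K$ (through $g=\widehat{\ifu K}$).

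The main obstacle is the summability claim $\sup_{x}\sum_{\nu\in\Lambda-\Lambda}|g(x+\nu)|<\infty$, because $g=\widehat{\ifu K}$ is in $C_0(G)$ but need not decay fast enough to be $\ell^1$ over a uniformly discrete set in a general LCA group — one does not have polynomial decay as in $\R^d$. I expect the resolution to be that one does not actually need pointwise decay of $g$: one can instead go back one step and bound $\int_K|f_\varphi|^2$ directly by choosing, via density (Proposition on $\CS_K$ being dense, or simply by regularity of Haar measure), a function $\phi\in C_c(\widehat G)$ with $\ifu K\le \phi$ pointwise; then $\int_K|f_\varphi|^2\le \int \phi|f_\varphi|^2 = \sum_{\lambda,\mu}\varphi(\lambda)\overline{\varphi(\mu)}\,\widecheck\phi(\mu-\lambda)$ with $\widecheck\phi\in C_0(G)$, and now additionally $\widecheck\phi$ can be taken in $L^1(G)$ if $\phi$ is chosen smooth enough (e.g. $\phi\in C_c^\infty$ when $\widehat G$ is a Lie group; in general use that $\widehat G$ may be assumed compactly generated, hence of the form $\R^a\times\Z^b\times(\text{compact})$, where one has genuine $C_c^\infty$ functions and Schwartz decay). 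With $\widecheck\phi\in L^1(G)\cap C_0(G)$, the bounded-overlap argument of the previous paragraph runs cleanly and gives $\sum_{\nu\in\Lambda-\Lambda}|\widecheck\phi(x+\nu)|\le \theta_G(U)^{-1}\|\widecheck\phi\|_{L^1(G)}$ uniformly in $x$, completing the estimate with $B=\theta_G(U)^{-1}\|\widecheck\phi\|_{L^1(G)}$. So the real content is this reduction-and-smoothing step; everything else is the standard Schur-test bookkeeping.
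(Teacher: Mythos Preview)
Your Schur-test strategy is natural, but two steps fail as written.

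\smallskip
\textbf{The detour through $\Lambda-\Lambda$.} For a \emph{general} uniformly discrete set~$\Lambda$ the difference set $\Lambda-\Lambda$ need not be uniformly discrete, and need not even be discrete: take $\Lambda=\{n+\epsilon_n:n\in\mathbb Z\}\subset\mathbb R$ with $|\epsilon_n|<1/4$ and the $\epsilon_n$ rationally independent; then $\Lambda$ is uniformly discrete but $\Lambda-\Lambda$ is dense. Finite local complexity is a special feature of model sets, not of arbitrary uniformly discrete sets, and the lemma is stated for the latter. Fortunately you do not need $\Lambda-\Lambda$ at all: for fixed $\lambda\in\Lambda$ the inner sum $\sum_{\mu\in\Lambda}|g(\mu-\lambda)|$ runs over a translate of the uniformly discrete set~$\Lambda$ itself, so the correct Schur constant is $\sup_{x\in G}\sum_{\mu\in\Lambda}|g(\mu-x)|$.

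\smallskip
\textbf{The summability bound.} Even with that correction, the claimed inequality $\sum_{\nu}|\widecheck\phi(x+\nu)|\le \theta_G(U)^{-1}\|\widecheck\phi\|_{L^1(G)}$ is false: membership in $L^1\cap C_0$ does not control a pointwise sum over a uniformly discrete set. On $G=\mathbb R$ one can build $h\in L^1\cap C_0$ with $h(n)=1/n$ for $n\ge2$ by putting a spike of height $1/n$ and width $1/n^2$ at each integer~$n$; then $\sum_n|h(n)|=\infty$ while $\|h\|_{L^1}<\infty$. What you actually need is the Wiener amalgam condition $\widecheck\phi\in W(L^\infty,L^1)$, and producing such a $\phi\ge\ifu K$ in a general LCA group is exactly the missing idea.

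\smallskip
\textbf{How the paper closes the gap.} The paper avoids both issues by choosing the majorant on the $G$ side rather than the $\widehat G$ side. Take a Dirac net $(v_j)$ of positive definite functions in $C_c(G)$; then $\widehat{v_j}\to1$ locally uniformly, so $\widehat{v_j}\ge\varepsilon$ on~$K$ for large~$j$, and the support of $v_j$ can be taken so small that the translates $\{v_j(\cdot-\lambda):\lambda\in\Lambda\}$ have pairwise disjoint supports. Setting $f_j=\|v_j\|_{L^2(G)}^{-1}\sum_{\lambda}\varphi(\lambda)v_j(\cdot-\lambda)$ gives $\|f_j\|_{L^2(G)}=\|\varphi\|_{\ell^2(\Lambda)}$ directly, and $\widehat{f_j}=\|v_j\|_{L^2(G)}^{-1}\,\widehat{v_j}\,f_\varphi$, whence $\varepsilon\,\|\ifu K f_\varphi\|_{L^2(\widehat G)}\le\|v_j\|_{L^2(G)}\|\varphi\|_{\ell^2(\Lambda)}$. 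In your language this amounts to choosing the majorant $\phi=\varepsilon^{-2}|\widehat{v_j}|^2$, whose inverse transform $\varepsilon^{-2}\,v_j*\widetilde{v_j}$ is \emph{compactly supported}; then the Schur sum over~$\Lambda$ has only boundedly many nonzero terms and your argument goes through. So the two routes meet once one sees that the right smoothing lives in $C_c(G)$ rather than $C_c(\widehat G)$.
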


Recall that $f_\varphi\in C(\widehat G)$ denotes the discrete Fourier transform of~$\varphi$, which is defined by
  $f_\varphi(\chi)=\sum_{\lambda\in\Lambda}\varphi(\lambda)\overline{\chi(\lambda)}$.

\begin{proof}
Take any Dirac net $(v_j)_{j\in J}$ in~$G$ of positive definite functions, compare \cite[Lem.~3.4.1]{DE}. Fix $\varepsilon\in (0,1)$ and choose $j=j(\Lambda, K, \varepsilon)$ sufficiently large such that the following two properties are satisfied.
\begin{itemize}
\item[(a)] For any $\lambda\in \Lambda$, the set $\supp(v_j)+\lambda$ contains exactly one point of~$\Lambda$. This is possible as~$\Lambda$ is uniformly discrete. 
\item[(b)] We have $\widehat{v_j}\ge \varepsilon >0$ on~$K$. This is possible as $\widehat{v_j}$ is non-negative for every~$j$ and since  $(\widehat{v_j})_{j\in J}$ converges locally uniformly to one by \cite[Lem.~3.4.5]{DE}. 
\end{itemize}
Now take any $\varphi\in \ell^2(\Lambda)$ of finite support and define $f_j\in L^2(G)\cap C_c(G)$ by
\begin{displaymath}
f_j(x)=\frac{1}{\norm{v_j}_{L^2(G)}} \sum_{\lambda\in \Lambda} v_j(x-\lambda) \varphi(\lambda) \,.
\end{displaymath}
Note that by the support condition on~$v_j$ we have $\norm{\varphi}_{\ell^2(\Lambda)}^2=\norm{f_j}_{L^2(G)}^2=\norm{\widehat {f_j}}_{L^2(\widehat G)}$. A straightforward calculation shows $\widehat{f_j}\in L^1(\widehat G)\cap L^2(\widehat G)$, where
\begin{displaymath}
\widehat{f_j}=\frac{\widehat{v_j}}{\norm{v_j}_{L^2(G)}}\cdot f_\varphi\ .
\end{displaymath}
We can thus conclude
\begin{displaymath}
\norm{\widehat{v_j}\cdot f_\varphi}_{L^2(\widehat G)}=\norm{v_j}_{L^2(G)}\cdot \norm{\varphi}_{\ell^2(\Lambda)}\,.
\end{displaymath}
Since $0 < \varepsilon\le \widehat{v_j}$ on~$K$, we have
\begin{displaymath}
\varepsilon \cdot \norm{\ifu K\cdot f_\varphi}_{L^2(\widehat G)}\le \norm{v_j}_{L^2(G)}\cdot \norm{\varphi}_{\ell^2(\Lambda)}
\end{displaymath}
for every $\varphi\in \ell^2(\Lambda)$ of finite support.
Thus $B=\norm[L^2(G)]{v_j}^2/\varepsilon^2$ gives an upper bound for stable interpolation.
\end{proof}

\subsection{An interpolation lower bound}

\begin{prop}[Interpolation lower bound]\label{lem:ilbms} Consider any bare cut-and-project scheme $(G,H,\cL)$, where~$G$ is metrizable. Take any relatively compact $W\subset H$ and consider the projection set $\Lambda_W\subset G$. Then there exist a compact set $K=K(W)\subset \widehat G$ and a positive constant $A=A(W,K)$ such that
  \begin{displaymath}
    A\norm[\ell^2(\Lambda_W)]\varphi^2 \le  \norm[L^2(\widehat G)]{\ifu Kf_\varphi}^2
  \end{displaymath}
 for all finitely supported $\varphi\from\Lambda_W\to\C$.
\end{prop}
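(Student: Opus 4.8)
The plan is to bound $\norm[L^2(\widehat G)]{\ifu K f_\varphi}^2$ from below by a nonnegative Toeplitz form over $\Lambda_W$ and then to force that form to be strictly diagonally dominant by replacing $\ifu K$ with a minorant whose inverse Fourier transform is sharply concentrated at the origin. First I would record the identity, valid for finitely supported $\varphi$ by expanding $\abs{f_\varphi}^2$ and applying Fubini to the resulting finite double sum,
\[
  \norm[L^2(\widehat G)]{\ifu K f_\varphi}^2=\int_K\abs{f_\varphi}^2\dd\theta_{\widehat G}
  =\sum_{\lambda,\lambda'\in\Lambda_W}\varphi(\lambda)\overline{\varphi(\lambda')}\,\widecheck{\ifu K}(\lambda'-\lambda),
  \qquad\widecheck{\ifu K}(x)=\int_K\chi(x)\dd\theta_{\widehat G}(\chi).
\]
Since $\abs{f_\varphi}^2\ge 0$, replacing $\ifu K$ by any measurable $\psi$ with $\psi\le\ifu K$ only decreases the left-hand side, so it is enough to produce a compact $K\subset\widehat G$ and a real even $\psi$ with $0\le\psi\le 1$ supported in $K$ (hence $\psi\le\ifu K$) for which $\sum_{\lambda,\lambda'}\varphi(\lambda)\overline{\varphi(\lambda')}\widecheck\psi(\lambda'-\lambda)\ge A\norm[\ell^2(\Lambda_W)]\varphi^2$. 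The structural input I would use is that $\Lambda_W-\Lambda_W$ lies inside the projection set $D:=\pi_G(\cL\cap(G\times(\cl{W}-\cl{W})))$, which has relatively compact window and is therefore uniformly discrete, with $0\in D=-D$. Splitting off the diagonal and estimating the remainder with $2ab\le a^2+b^2$ gives
\[
  \sum_{\lambda,\lambda'\in\Lambda_W}\varphi(\lambda)\overline{\varphi(\lambda')}\widecheck\psi(\lambda'-\lambda)
  \ \ge\ \Bigl(\widecheck\psi(0)-\sum_{\delta\in D\setminus\{0\}}\abs{\widecheck\psi(\delta)}\Bigr)\norm[\ell^2(\Lambda_W)]\varphi^2,
\]
so the whole problem collapses to finding $\psi$ with $\sum_{\delta\in D\setminus\{0\}}\abs{\widecheck\psi(\delta)}<\widecheck\psi(0)$; then $A:=\widecheck\psi(0)-\sum_{\delta\in D\setminus\{0\}}\abs{\widecheck\psi(\delta)}$ and $K:=\supp\psi$ complete the argument.

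For the minorant I would take a concentrated Fej\'er-type kernel. Choose $\phi_n\in C_c(\widehat G)$, real and even, with $0\le\phi_n\le 1$, equal to $1$ on a compact set $A_n$ from a van Hove sequence, supported in $A_n+V$ for a fixed compact zero-neighbourhood $V$, and arranged so that $\widecheck{\phi_n}$ is \emph{concentrated at the origin}, meaning $\widecheck{\phi_n}(0)=\int_{\widehat G}\phi_n\dd\theta_{\widehat G}\to\infty$ while $\sum_{\delta\in D\setminus\{0\}}\abs{\widecheck{\phi_n}(\delta)}\to 0$. Then $\psi_n:=\bigl(\int\phi_n^2\bigr)^{-1}\phi_n*\phi_n$ is real and even, supported in the compact set $(A_n+V)-(A_n+V)$, satisfies $0\le\psi_n\le 1$ by Cauchy--Schwarz, and has $\widecheck{\psi_n}=\bigl(\int\phi_n^2\bigr)^{-1}\widecheck{\phi_n}^2\ge 0$ with $\widecheck{\psi_n}(0)=\bigl(\int\phi_n\bigr)^2\big/\int\phi_n^2\ge\int\phi_n\to\infty$. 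Using $\norm[\infty]{\widecheck{\phi_n}}=\widecheck{\phi_n}(0)$ (because $\phi_n\ge 0$),
\[
  \frac{\sum_{\delta\in D\setminus\{0\}}\widecheck{\psi_n}(\delta)}{\widecheck{\psi_n}(0)}
  \ \le\ \frac{\widecheck{\phi_n}(0)\sum_{\delta\in D\setminus\{0\}}\abs{\widecheck{\phi_n}(\delta)}}{\widecheck{\phi_n}(0)^2}
  \ =\ \frac{\sum_{\delta\in D\setminus\{0\}}\abs{\widecheck{\phi_n}(\delta)}}{\int\phi_n}\ \longrightarrow\ 0,
\]
so for $n$ large the required inequality $\sum_{\delta\in D\setminus\{0\}}\widecheck{\psi_n}(\delta)<\widecheck{\psi_n}(0)$ holds, and one takes $K:=(A_n+V)-(A_n+V)$.

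The step I expect to be the main obstacle is the construction of the concentrated cut-offs $\phi_n$ for a general metrizable $\widehat G$. In $\widehat G=\mathbb R^d$ this is routine: dilate a fixed smooth bump, whose inverse transform is Schwartz, so the sum of its dilated tail over the uniformly discrete set $D$ decays polynomially fast while the normalising mass grows only polynomially. In general I would first reduce to $\widehat G$ compactly generated (legitimate because $K$ is only required to be relatively compact, cf.\ the remark following the definition of $\PW_K$) and then invoke the structure theorem $\widehat G\cong\mathbb R^a\times\mathbb Z^b\times\mathbb K$ with $\mathbb K$ compact: dilate a bump on the Euclidean factor, use de la Vall\'ee Poussin (trapezoidal) kernels on the $\mathbb Z^b$-factor, and put $\phi_n\equiv\ifu{\mathbb K}$ on the compact factor, whose inverse transform is a point mass at the trivial character of $\widehat{\mathbb K}$ and hence is automatically ``concentrated''. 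Uniform discreteness of $D$ then yields $\sum_{\delta\in D\setminus\{0\}}\abs{\widecheck{\phi_n}(\delta)}\to 0$ by comparing the sum to the $L^1$-mass of $\widecheck{\phi_n}$ outside a fixed zero-neighbourhood of $G$, which tends to zero as the plateau of $\phi_n$ grows. (One could instead run the argument with $\phi_n=\ifu{A_n}$ itself, turning the key step into a Plancherel--P\'olya/Bessel inequality for the exponential system $\{e_{-\delta}\rstr{A_n}:\delta\in D\}$ whose frame constant must be controlled below $2\theta_{\widehat G}(A_n)$; the smoothed version above is designed to sidestep this quantitative point.)
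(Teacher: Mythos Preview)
Your skeleton---expand $\norm[L^2(\widehat G)]{\ifu K f_\varphi}^2$ as a Toeplitz form, replace $\ifu K$ by a nonnegative minorant supported in $K$, and establish diagonal dominance over the uniformly discrete set $D\supset\Lambda_W-\Lambda_W$---is exactly the paper's argument. The paper likewise uses $D=\Lambda_{W-W}$ and the same Schur/Young-type off-diagonal bound, so the reduction to ``find $g$ with $\sum_{\delta\in D\setminus\{0\}}\abs{\widecheck g(\delta)}<\widecheck g(0)$'' is identical.

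The divergence is in how you build the minorant and verify the smallness, and here your proposal is more complicated than necessary and internally inconsistent. The paper takes $g_K=\ifu{K\setminus\partial^V K}*\varphi_V$ with a \emph{fixed} nonnegative positive-definite $\varphi_V\in C_c(\widehat G)$, $\int\varphi_V=1$. Then $\abs{\widehat{g_K}}/\widehat{g_K}(0)\le\widehat{\varphi_V}$ uniformly in $K$, and the one nontrivial point is summability of $\widehat{\varphi_V}$ over $D$; this drops out of the Poisson Summation Formula (Lemma~\ref{lem:FTWMS}): choose $h\in KL(H)$ with $h\ge\ifu{W-W}$ and compute $\omega_{\ifu{W-W}}(\widehat{\varphi_V})\le\omega_h(\widehat{\varphi_V})=\dens(\cL)\cdot\omega_{\widecheck h}(\varphi_V)<\infty$. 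With this summable dominant, Lemma~\ref{lem:rect} plus dominated convergence gives the ratio $\to 0$ along any van Hove sequence $(K_n)$ in $\widehat G$, with no appeal to the structure theorem. By contrast, your stated concentration requirement $\sum_{\delta\ne 0}\abs{\widecheck{\phi_n}(\delta)}\to 0$ is incompatible with your support description ``$\phi_n$ supported in $A_n+V$, $V$ fixed'': writing $\phi_n=\ifu{A_n}*\rho$ one has $\widecheck{\phi_n}=\widecheck{\ifu{A_n}}\cdot\widecheck\rho$, and in e.g.\ $\widehat G=\mathbb R$ each term $\abs{\widecheck{\ifu{A_n}}(\delta)\widecheck\rho(\delta)}$ is merely $O(1)$, so the sum is bounded but does not tend to $0$. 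Only the dilated-bump variant (growing shoulders) gives the stronger conclusion, contradicting the fixed-$V$ clause. What your displayed inequality actually needs is just the \emph{ratio} $\sum_{\delta\ne 0}\abs{\widecheck{\phi_n}(\delta)}/\widecheck{\phi_n}(0)\to 0$, and for that you still need a dominant summable over $D$---precisely the step the paper's PSF argument handles in one line. The Fej\'er square $\psi_n=\phi_n*\phi_n/\norm[L^2]{\phi_n}^2$ and the structure-theoretic detour can be made to work, but they are unnecessary once you use the model-set machinery already in the paper.
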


\begin{remark}\label{rem:int}
The following proof shows how to obtain suitable $K\subset \widehat G$. Here we summarize the construction. Fix any compact unit neighborhood $V\subset \widehat G$ and take non-negative and positive definite  $\varphi_V\in C_c(\widehat G)$ such that $\supp(\varphi_V)\subset V$ and $\int_{\widehat G}\varphi_V\dd\theta_{\widehat G}=1$.  For any relatively compact $K\subset\widehat G$ the function $g_K=\ifu {K\setminus \partial^V K}*\varphi_V$ satisfies $g_K\in KL(\widehat G)$ by H\"older's inequality and  $\ifu K\ge g_K \ge 0$. Here we denote by $f*g$ the convolution of~$f$ and~$g$. Now fix a zero neighborhood $B\subset G$ such that $(\Lambda_W-\Lambda_W)\cap B=\{0\}$ and choose relatively compact $K\subset \widehat G$ such that
\begin{displaymath}
\omega_{\ifu {W-W}}\left(\ifu {B^c}\cdot \abs*{\frac{\widehat{g_K}}{\widehat{g_K}(0)}}\right)<1\, .
\end{displaymath}
The positive constant~$A$ in the proposition is then given by
\begin{displaymath}
A=\left( \widehat{g_K}(0) -
\omega_{\ifu {W-W}}\left( \ifu {B^c} \cdot\abs*{\widehat{g_K}}\right) \right) \,.
\end{displaymath}
For example,~$K$ may be taken as a member of any van Hove sequence $(K_n)_n$ of sufficiently large index. To see this, we may argue as in the proof of \cite[Prop.~3.14]{RS15}. Note first that van Hove sequences exist in~$\widehat G$ as~$\widehat G$ is metrizable. Furthermore $(K_n\setminus \partial^V K_n)_{n\in\mathbb N}$ is also a van Hove sequence, which can be seen by straightforward estimates. We can then use Lemma~\ref{lem:rect} to infer $\frac{\widehat{g_{K_n}}}{\widehat{g_{K_n}}(0)}\to \ifu{\{0\}}$ as $n\to \infty$. Moreover the latter functions are dominated by the non-negative continuous function $\widehat{\varphi_V}\in LK(G)$. In fact we have $\widehat{\varphi_V}\in L^1(\omega_{\ifu {W-W}})$, since $\ifu {W-W}$ may be majorised by suitable $h\in KL(H)$, and since we have $\widehat{\varphi_V} \in L^1(\omega_h)$ by Lemma~\ref{lem:FTWMS}.  Hence the claim follows from dominated convergence. 
\end{remark}

\begin{proof}[Proof of Proposition~\ref{lem:ilbms}]
Note $\Lambda_W-\Lambda_W\subset \Lambda_{W-W}$, where the latter set is uniformly discrete as~$W$ is relatively compact. We may thus choose any zero neighborhood $B\subset G$ such that $(\Lambda_W-\Lambda_W)\cap B=\{0\}$.
Take any $g_K\in KL(\widehat G)$ such that $\ifu K \ge g_K\ge 0$, compare Remark~\ref{rem:int}.
We can then write
\begin{displaymath}
\begin{split}
\norm[L^2(\widehat G)]{\sqrt{g_K}\cdot f_\varphi}^2&=
\sum_{\lambda, \lambda'\in\Lambda_W} \varphi(\lambda) \overline{\varphi(\lambda')} \int g_K(\chi) \overline{\chi(\lambda-\lambda')} \dd\theta_{\widehat G}(\chi)=
\sum_{\lambda, \lambda'\in\Lambda_W} \varphi(\lambda) \overline{\varphi(\lambda')} \, \widehat{g_K}(\lambda-\lambda') \\
&= \norm[\ell^2(\Lambda_W)]{\varphi}^2 \cdot \norm[L^1(\widehat G)]{g_K}
+\sum_{\lambda, \lambda'\in\Lambda_W} \varphi(\lambda) \overline{\varphi(\lambda')} \, (\widehat{g_K}\cdot \ifu {B^c})(\lambda-\lambda')\,.
\end{split}
\end{displaymath}
%
The latter sum can be simplified as follows. We first note that for non-negative $h:G\to \mathbb R$ we have uniformly  in $\lambda\in\Lambda_W$ the estimate
\begin{displaymath}
\sum_{\lambda'\in\Lambda_W} h(\lambda-\lambda') \le
\sum_{\lambda'' \in\Lambda_W-\Lambda_W} h(\lambda'')\le
\sum_{\lambda''\in \Lambda_{W - W}} h(\lambda'')\,.
\end{displaymath}
We can now estimate the above sum using an argument as in the proof of Young's inequality \cite[Thm.~20.18]{HeRo} where $p=q=2$ and $r=1$ by
\begin{displaymath}
\begin{split}
  \abs*{ \sum_{\lambda, \lambda'\in\Lambda_W} \varphi(\lambda)\, (\widehat{g_K}\cdot \ifu {B^c})(\lambda-\lambda')\, \overline{\varphi(\lambda')} } &
  \le \norm[\ell^2(\Lambda_W)]{\varphi} \cdot \sum_{\lambda\in \Lambda_{W - W}} (\abs{\widehat{g_K}}\cdot \ifu {B^c})(\lambda) \cdot  \norm[\ell^2(\Lambda_W)]{\varphi}\\
&\le \norm[\ell^2(\Lambda_W)]{\varphi}^2 \cdot \omega_{\ifu {W-W}}(\abs{\widehat{g_K}}\cdot \ifu {B^c})\,.
\end{split}
\end{displaymath}
If~$\cL$ projects injectively to~$G$, then the last inequality is an equality. Collecting the above estimates, we arrive at
\begin{displaymath}
\norm[L^2(\widehat G)]{\ifu K\cdot f_\varphi}^2\ge \norm[L^2(\widehat G)]{\sqrt{g_K}\cdot f_\varphi}^2\ge \norm[\ell^2(\Lambda_W)]{\varphi}^2 \left( \norm[L^1(\widehat G)]{g_K} - \omega_{\ifu {W-W}}(\abs{\widehat{g_K}}\cdot \ifu {B^c})\right)\,.
\end{displaymath}
The term in brackets will be positive if~$K$ is a member of any van Hove sequence of sufficiently large index, compare Remark~\ref{rem:int}. 
\end{proof}

\section{Stable sampling by model sets}\label{sec:samp}

\subsection{A sampling upper bound}\label{sec:ub}

Since~$\PW_K$ is closed, Hilbert space techniques are available. One can show that the existence of an upper bound for stable sampling is equivalent to the existence of an upper bound for stable interpolation, see \cite[p.~129]{Y01}. A direct proof of an upper bound is, for general uniformly discrete sets, given in \cite[Lem.~2]{GKS08}. In the following, we give an alternative argument for weak model sets based on the Poisson Summation Formula. It yields an explicit constant appearing in the upper bound. This serves as a preparation for the lower bound, where the following argument will be adapted.

\begin{lemma}[Explicit sampling upper bound for weak model sets]\label{lem:ubs}
Consider any bare cut-and-project scheme $(G,H,\cL)$. Take any relatively compact $W\subset H$ and any relatively compact $K\subset \widehat G$. Define the finite constant $B=B(W,K)$ by $B=\dens(\cL)\cdot  \omega_{\abs{\widecheck h}}(K-K)$, where $h\in KL(H)$ is any function satisfying $h \ge \ifu W$. We then have
\begin{displaymath}
\sum_{\lambda\in \Lambda_W} \abs{f(\lambda)}^2 \le B \cdot  \norm{f}_{L^2(G)}^2
\end{displaymath}
for all $f\in \PW_K$.
\end{lemma}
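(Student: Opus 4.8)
The idea is to expand the sampling sum using the Poisson Summation Formula for weighted model combs (Lemma~\ref{lem:FTWMS}), reducing it to an integral against the autocorrelation-type comb $\omega_{\abs{\widecheck h}}$. First I would reduce to the dense test function space: since $\CS_K=\PW_K\cap L^1(G)$ is dense in $(\PW_K,\norm{\cdot}_{L^2(G)})$ when $K$ is Riemann measurable, but here $K$ is only relatively compact, I would instead enlarge $K$ slightly to a compact Riemann measurable $K'\supset K$ (using regularity of $\theta_{\widehat G}$, as in the Remark after Theorem~\ref{cor:SIn1}), prove the bound for $\CS_{K'}$, and pass to the limit; alternatively, work directly with $f\in\PW_K$ by noting $\widehat f\in L^1(\widehat G)\cap L^2(\widehat G)$ and $\widehat{\abs{f}^2}$ makes sense. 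The cleanest route: for $f\in\CS_K$ write $\abs{f}^2 = f\cdot\overline f$, whose Fourier transform is $\widehat f * \widehat f^{\,\dagger}$ (with $\widehat f^{\,\dagger}(\chi)=\overline{\widehat f(-\chi)}$), a continuous function supported in $K-K$, and $\abs f^2\in L^1(G)\cap LK(G)$.

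The main step is the computation
\begin{displaymath}
\sum_{\lambda\in\Lambda_W}\abs{f(\lambda)}^2 = \sum_{(x,y)\in\cL}\ifu W(y)\,\abs{f(x)}^2 \le \sum_{(x,y)\in\cL} h(y)\,\abs{f(x)}^2 = \omega_h(\abs f^2),
\end{displaymath}
where the inequality uses $h\ge\ifu W\ge 0$ and $\abs f^2\ge 0$. Now apply the Poisson Summation Formula from Lemma~\ref{lem:FTWMS}: since $h\in KL(H)$ and $\abs f^2\in LK(G)\subset KL(G)\cup LK(G)$, we get
\begin{displaymath}
\omega_h(\abs f^2) = \dens(\cL)\cdot\omega_{\widecheck h}\bigl((\abs f^2)^{\vee}\bigr) = \dens(\cL)\cdot\omega_{\widecheck h}\bigl(\widecheck{\widehat f}\cdot\widecheck{\widehat f^{\,\dagger}}\bigr)^{\!\vee},
\end{displaymath}
and $(\abs f^2)^{\vee} = \widehat f * \widetilde{\widehat f}$ up to conjugation, supported in $K-K$. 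Bounding $\abs{(\abs f^2)^{\vee}(\eta)}\le \norm{\widehat f}_{L^1}^2$ pointwise would be too lossy; instead bound $\abs{\omega_{\widecheck h}((\abs f^2)^{\vee})}\le \omega_{\abs{\widecheck h}}(\abs{(\abs f^2)^{\vee}})$ and use that $\abs{(\abs f^2)^{\vee}}$ is supported in $K-K$ together with $\norm{\widehat f * \widehat f^{\,\dagger}}_{\infty}\le\norm{\widehat f}_{L^2}^2 = \norm f_{L^2(G)}^2$ (Cauchy--Schwarz, since translation is an $L^2$-isometry). This yields
\begin{displaymath}
\sum_{\lambda\in\Lambda_W}\abs{f(\lambda)}^2 \le \dens(\cL)\cdot\norm f_{L^2(G)}^2\cdot\omega_{\abs{\widecheck h}}(K-K) = B\cdot\norm f_{L^2(G)}^2.
\end{displaymath}

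The finiteness of $B$ needs justification: $\omega_{\abs{\widecheck h}}$ is translation bounded (Lemma~\ref{lem:FTWMS} gives $\omega_{\widecheck h}\in\M^\infty(\widehat G)$, and $\abs{\omega_{\widecheck h}}=\omega_{\abs{\widecheck h}}$), and $K-K$ is relatively compact since $K$ is, so $\omega_{\abs{\widecheck h}}(K-K)<\infty$. Finally, the extension from $f\in\CS_K$ to all $f\in\PW_K$: given $f\in\PW_K$, approximate $\widehat f\in L^2(K)$ in $L^2$-norm by functions in $C_c$ supported in a fixed compact neighborhood of $\cl K$, giving $f_n\in\CS_{K_0}$ with $f_n\to f$ in $L^2(G)$ and, by uniform discreteness of $\Lambda_W$ together with the uniform upper bound, $f_n|_{\Lambda_W}\to f|_{\Lambda_W}$ in $\ell^2$; taking limits preserves the inequality (with $K$ replaced by a slightly larger set, which one then shrinks back using that $B$ depends on $K$ only through the relatively compact set $K-K$ and monotone/regularity arguments). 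I expect the main obstacle to be handling the Fourier transform of $\abs f^2$ cleanly for general $f\in\PW_K$ (as opposed to $f\in\CS_K$) and making the density/approximation argument rigorous without circularity; the PSF computation itself is routine once the test function $\abs f^2$ is placed in $KL(G)\cup LK(G)$.
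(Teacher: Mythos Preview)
Your core argument---bound $\sum_{\lambda\in\Lambda_W}\abs{f(\lambda)}^2$ by $\omega_h(\abs f^2)$, apply the Poisson Summation Formula to pass to $\dens(\cL)\cdot\omega_{\widecheck h}(\widecheck{\abs f^2})$, then use $\supp(\widecheck{\abs f^2})\subset K-K$ and $\norm{\widecheck{\abs f^2}}_\infty\le\norm f_{L^2(G)}^2$---is exactly the paper's proof. But your restriction to $\CS_K$ followed by an approximation step is an unnecessary detour, and the obstacle you anticipate (``handling the Fourier transform of $\abs f^2$ cleanly for general $f\in\PW_K$'') is not real.

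The key observation you are missing is that $g=\abs f^2\in LK(G)$ holds for \emph{every} $f\in\PW_K$, not just for $f\in\CS_K$. Indeed, $f\in\PW_K$ gives $\widehat f\in L^2(\widehat G)$ supported in the relatively compact set $K$, hence $\widehat f\in L^1(\widehat G)\cap L^2(\widehat G)$ and $f\in C_0(G)$. Then $g=\abs f^2\in L^1(G)\cap C_0(G)$, and $\widecheck g=\widecheck f*\widetilde{\widecheck f}$ is a convolution of two functions in $L^2(\widehat G)$ with supports in $-K$ and $K$ respectively, so $\widecheck g\in C_c(\widehat G)$ with support in $K-K$. Thus $g\in LK(G)$ and Lemma~\ref{lem:FTWMS} applies directly. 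No density of $\CS_K$, no enlargement of $K$, no shrinking-back argument is needed; this also avoids the subtlety that inverse Fourier transforms of arbitrary $C_c(\widehat G)$ functions need not lie in $L^1(G)$, which would undermine your proposed approximants $f_n\in\CS_{K_0}$.

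One minor correction: your displayed equality $\sum_{\lambda\in\Lambda_W}\abs{f(\lambda)}^2=\sum_{(x,y)\in\cL}\ifu W(y)\abs{f(x)}^2$ presumes that $\pi_G\rstr\cL$ is injective, which is not assumed for a bare cut-and-project scheme. The paper writes this as an inequality $\sum_{\lambda\in\Lambda_W}g(\lambda)\le\omega_{\ifu W}(g)$, which is all that is needed.
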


\begin{remark}\label{rem:gr}
A suitable function $h\ge \ifu W$ may be constructed as follows. Take any  compact zero neighborhood~$U$ in~$H$ and take non-negative $\varphi_U\in L^2(H)$  such that $\supp(\varphi_U)\subset U$ and $\int_H\varphi_U\dd\theta_H=1$. Then $h=\ifu {W-U} * \varphi_U$ satisfies $h\ge \ifu W$ by construction, and we have $h\in KL(H)$ by \cite[Lem.~3.7]{RS15}.
\end{remark}

\begin{proof}[Proof of Lemma~\ref{lem:ubs}]
Fix any relatively compact window $W\subset H$ and $K\subset \widehat G$. Take any $h\in KL(H)$ such that $h\ge \ifu W$, compare Remark~\ref{rem:gr}.  Consider any $f\in \PW_K$. Then $g=\abs f^2\in LK(G)$, and we have $\widecheck{g}=\widecheck{f\cdot \overline{f}}=\widecheck{f}*\widecheck{\overline{f}}=
\widecheck{f}*\widetilde{\widecheck{f}}$. Here we used the notation $\widetilde f(x)=\overline{f(-x)}$.  Using the Poisson Summation Formula Lemma~\ref{lem:FTWMS}, we can write
\begin{displaymath}
\sum_{\lambda\in \Lambda_W} g(\lambda) \le \omega_{\ifu W}(g)\le \omega_h(g)=\dens(\cL)\cdot \omega_{\widecheck h}(\widecheck g)
=\dens(\cL)\cdot \omega_{\widecheck h}(\widecheck f * \widetilde{\widecheck f})\,.
\end{displaymath}
This first inequality is an equality if~$\cL$ projects injectively to~$G$.
Now we can apply a standard estimate, using that $\omega_{\widecheck h}$ is a complex Radon measure and that $\supp(\widecheck f * \widetilde{\widecheck f})\subset K-K$. We obtain
\begin{displaymath}
\dens(\cL)\cdot \omega_{\widecheck h}(\widecheck f * \widetilde{\widecheck f}) \le B \cdot \norm{\widecheck f * \widetilde{\widecheck f}}_\infty \le  B \cdot \norm{\widecheck f }_2 \cdot  \norm{\widecheck f }_2 \,,
\end{displaymath}
where the second estimate relies on Young's inequality.
The finite constant $B=B(K,W)$ can be chosen independently of the continuous function $f\in \PW_K$ by
$B=\dens(\cL)\cdot\omega_{\abs{\widecheck h}}(K-K)$.
\end{proof}


%
%


\subsection{A sampling lower bound}

\begin{prop}[Sampling lower bound]\label{lem:slb}
Consider any bare cut-and-project scheme $(G,H,\cL)$. Assume that~$H$ is $\sigma$-compact and that~$\cL$ projects both injectively and densely to~$G$. Take any relatively compact $K\subset \widehat G$. Then there exist compact  $W=W(K)\subset H$ and a positive constant $A=A(K,W)$ such that
\begin{displaymath}
A \cdot  \norm{f}_{L^2(G)}^2\le \sum_{\lambda\in \Lambda_W} \abs{f(\lambda)}^2 
\end{displaymath}
for any $f\in \PW_K$.
\end{prop}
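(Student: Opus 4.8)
**The plan is to dualize the sampling problem to an interpolation problem in the dual cut-and-project scheme, using the Poisson Summation Formula as the bridge.** The key observation is that the sampling lower bound for $\PW_K$ with the model set $\Lambda_W$ should be, via Fourier analysis of $\omega_{\ifu W}$ and its transform, essentially the same estimate as the interpolation lower bound already established in Proposition~\ref{lem:ilbms}, but now carried out in the dual scheme $(\widehat G, \widehat H, \cL_0)$. Concretely: for $f\in\PW_K$ we want to bound $\sum_{\lambda\in\Lambda_W}\abs{f(\lambda)}^2$ from below by $A\norm{f}_{L^2(G)}^2 = A\norm{\widehat f}_{L^2(K)}^2$. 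Setting $g=\abs f^2\in LK(G)$ as in Lemma~\ref{lem:ubs}, we have $\sum_{\lambda\in\Lambda_W}g(\lambda) = \omega_{\ifu W}(g)$ (equality because $\cL$ projects injectively to $G$), and the plan is to bound this below rather than above.

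\textbf{First I would} replace $\ifu W$ by a suitable test function from below. Mirroring Remark~\ref{rem:int} in the dual scheme, I would fix a compact unit neighborhood $U\subset H$, pick non-negative positive-definite $\varphi_U\in C_c(H)$ with $\supp\varphi_U\subset U$ and $\int_H\varphi_U\dd\theta_H=1$, and for relatively compact $W\subset H$ set $h_W=\ifu{W\setminus\partial^U W}*\varphi_U$, so that $0\le h_W\le\ifu W$ and $h_W\in KL(H)$. Then $\omega_{\ifu W}(g)\ge\omega_{h_W}(g)$. Now apply the Poisson Summation Formula (Lemma~\ref{lem:FTWMS}) with $g\in LK(G)$ and $h_W\in KL(H)$ to get $\omega_{h_W}(g)=\dens(\cL)\cdot\omega_{\widecheck{h_W}}(\widecheck g)$, and since $\widecheck g=\widecheck f*\widetilde{\widecheck f}$ with $\supp(\widecheck f*\widetilde{\widecheck f})\subset K-K$, split the model comb $\omega_{\widecheck{h_W}}$ on $\widehat G$ into its value at $0$ (the ``diagonal'' term, which is $\widecheck{h_W}(0)\cdot(\widecheck g)(0)=\widecheck{h_W}(0)\cdot\norm{\widecheck f}_2^2=\widecheck{h_W}(0)\cdot\norm{f}_{L^2(G)}^2$) plus an off-diagonal remainder supported on $(\Lambda_W)_0 \setminus\{0\}$ where $(\Lambda_W)_0$ is the relevant projection set in the dual scheme determined by $\supp(\omega_{\widecheck{h_W}})$.

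\textbf{The off-diagonal remainder must be absorbed.} Fix a compact zero neighborhood $B\subset\widehat G$ disjoint (save for $0$) from the difference set of the relevant dual projection set, and estimate the off-diagonal part by $\norm{\widecheck f*\widetilde{\widecheck f}}_\infty\le\norm{f}_{L^2(G)}^2$ times $\omega_{\ifu{\text{(dual difference window)}}}\!\left(\ifu{B^c}\cdot\abs{\widecheck{h_W}}\right)$, exactly as in the proof of Proposition~\ref{lem:ilbms}. The resulting lower bound is $\norm{f}_{L^2(G)}^2\cdot\bigl(\widecheck{h_W}(0)-\omega_{\ifu{\cdots}}(\ifu{B^c}\cdot\abs{\widecheck{h_W}})\bigr)$. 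To make the bracket positive I would let $W=W_n$ run through a van Hove sequence in $H$ (which exists since $H$ is $\sigma$-compact): then by Lemma~\ref{lem:rect} applied in the dual scheme, $\widecheck{h_{W_n}}/\widecheck{h_{W_n}}(0)\to\ifu{\{0\}}$ pointwise on $\widehat G$, dominated by $\widehat{\varphi_U}\in LK(\widehat G)$, which is $\omega_{\ifu{\cdots}}$-integrable by Lemma~\ref{lem:FTWMS} after majorizing the dual difference-window indicator by a $KL$ function. Dominated convergence then forces the off-diagonal term to $0$ relative to $\widecheck{h_{W_n}}(0)$, so the bracket is positive for large $n$, and $A=\dens(\cL)\cdot\bigl(\widecheck{h_{W_n}}(0)-\omega_{\ifu{\cdots}}(\ifu{B^c}\cdot\abs{\widecheck{h_{W_n}}})\bigr)>0$ works.

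\textbf{The main obstacle} I anticipate is bookkeeping the duality correctly: the quantity $\widecheck{h_W}$ lives on $\widehat G$, its support sits inside a projection set coming from $\cL_0$ (not $\cL$), and one must identify the correct ``difference window'' in $\widehat H$ and check that the uniform-discreteness and finite-local-complexity properties needed to run the Young-type estimate (as in Proposition~\ref{lem:ilbms}) genuinely hold there — this is where the hypothesis that $\cL$ projects densely to $G$ enters, since it guarantees $\pi_{\widehat G}$ is one-to-one on $\cL_0$, so that $\widecheck{h_W}$ is genuinely a weighted comb over a point set in $\widehat G$ in bijection with its support. A secondary subtlety is verifying that $\widecheck{h_W}$ evaluated off $B$ is dominated uniformly in $n$; this requires choosing the mollifier $\varphi_U$ once and for all before choosing $W_n$, so that $\abs{\widecheck{h_{W_n}}}\le\widehat{\varphi_U}$ (up to normalization) holds with an $n$-independent dominating function, exactly as in Remark~\ref{rem:int}.
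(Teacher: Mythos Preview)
Your overall strategy is exactly the paper's: minorize $\ifu W$ by $h_W=\ifu{W\setminus\partial^UW}*\varphi_U\in KL(H)$, apply the Poisson Summation Formula to $g=\abs f^2\in LK(G)$, split the resulting sum over $\cL_0$ into the term at the identity plus an off-diagonal remainder, and choose $W$ from a van Hove sequence in~$H$ to make the remainder small relative to the main term. So the approach is correct and matches the paper.

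However, the bookkeeping error you anticipated does occur, and it is not merely cosmetic. Since $h_W\from H\to\C$, its inverse transform $\widecheck{h_W}$ lives on $\widehat H$, not on $\widehat G$. After the Poisson Summation Formula you have
\[
\omega_{\widecheck{h_W}}(\widecheck f*\widetilde{\widecheck f})
=\sum_{(\chi,\eta)\in\cL_0}(\widecheck f*\widetilde{\widecheck f})(\chi)\cdot\widecheck{h_W}(\eta),
\]
and the split into diagonal plus remainder must be carried out on the $\widehat H$ side: you take the neighborhood $B\subset\widehat H$ (not $\widehat G$) isolating the identity of the uniformly discrete projection set ${_{K-K}\Lambda}\subset\widehat H$, and the off-diagonal bound is ${_{\ifu{K-K}}}\omega\bigl(\ifu{B^c}\abs{\widecheck{h_W}}\bigr)$, a measure on $\widehat H$ evaluated on a function on $\widehat H$. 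Your duality claim is also reversed: the hypothesis that $\cL$ projects densely to~$G$ is equivalent to $\pi_{\widehat H}|_{\cL_0}$ being injective, \emph{not} $\pi_{\widehat G}|_{\cL_0}$. This is precisely what lets you parametrize the lattice points contributing to the sum (those with $\chi\in K-K$) by their $\widehat H$-coordinate $\eta\in{_{K-K}\Lambda}$; in general $\pi_{\widehat G}(\cL_0)$ need not be uniformly discrete, so placing $B$ in $\widehat G$ as you propose would not isolate a single term. With this correction the argument goes through exactly as you outlined, and the positive constant is $A=\dens(\cL)\bigl(\widecheck{h_W}(1)-{_{\ifu{K-K}}}\omega(\ifu{B^c}\abs{\widecheck{h_W}})\bigr)$.
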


\begin{remark}\label{rem:samplow}
The following proof also shows how to obtain suitable  windows~$W$. Here we summarize the construction, which is analogous to that in Remark~\ref{rem:int}. Fix any compact zero neighborhood $U\subset H$ and take non-negative and positive definite $\varphi_U\in C_c(H)$ such that $\supp(\varphi_U)\subset U$ and $\int_H\varphi_U\dd\theta_H=1$. 
For given compact $W\subset H$ define $h_W=\ifu {W\setminus \partial^U W}*\varphi_U$. Then $h_W\in KL(H)$ satisfies $h_W\le \ifu W$ and $\widecheck{h_W}(1)=\theta_H(W\setminus \partial^U W)$. 

 Choose some compact zero neighborhood $B\subset \widehat H$ containing only the origin of the uniformly discrete projection set $_{K-K} \Lambda\subset \widehat H$. 
Now take compact $W\subset H$  such that
\begin{displaymath}
_{\ifu {K-K}}\omega\left( \ifu {B^c} \cdot \abs*{ \frac{\widecheck{h_W}}{\widecheck{h_W}(1)}}\right) < 1\,.
\end{displaymath}
Note that~$H$ admits a Hove sequence $(W_n)_{n\in\mathbb N}$ since~$H$ is $\sigma$-compact. One may choose $W=W_n$ for~$n$ sufficiently large, compare the arguments in Remark~\ref{rem:int}. In that case, the positive constant~$A$ can be chosen as
\begin{displaymath}
A=\dens(\cL)\cdot \left( \widecheck{h_W}(1) -
{_{\ifu {K-K}}}\omega\left( \ifu {B^c} \cdot \abs{\widecheck {h_W} }\right) \right) \,.
\end{displaymath}
\end{remark}


\begin{proof}[Proof of Proposition~\ref{lem:slb}]
Define $h_W\in KL(H)$ as in Remark~\ref{rem:samplow}.
Take $f\in\PW_K$ and note $g=\abs f^2\in LK(G)$.
Using the Poisson Summation Formula Lemma~\ref{lem:FTWMS}, we can write
\begin{displaymath}
\sum_{\lambda\in \Lambda_W} g(\lambda) = \omega_{\ifu W}(g) \ge \omega_{h_W}(g)=\dens(\cL)\cdot \omega_{\widecheck{h_W}}(\widecheck f * \widetilde{\widecheck f}) \,.
\end{displaymath}
In the first equation, we used that~$\cL$ projects injectively to~$G$. Note that the projection set $_{K-K}{\Lambda}\subset \widehat H$ is uniformly discrete as $K-K$ is relatively compact.
 Take any relatively compact unit neighborhood $B\subset \widehat H$ containing only the unit in $_{K-K}{\Lambda}$.
Noting $(\widecheck f * \widetilde{\widecheck f})(1)=\norm{f}_2^2$ and $\widecheck{h_W}(e)=\theta_H(W\setminus \partial^U W)$, we can write
\begin{equation}\label{eq:decomp}
\omega_{\widecheck{h_W}}(\widecheck f * \widetilde{\widecheck f})=
\sum_{(\chi,\eta)\in\cL_0} \left(\widecheck f * \widetilde{\widecheck f}\right)(\chi)\cdot \widecheck{h_W}(\eta)=\norm{f}_2^2\cdot \theta_H(W\setminus \partial^U W) + _{\widecheck f * \widetilde{\widecheck f}}\omega(\ifu {B^c}\widecheck{h_W})\,.
\end{equation}
In the second equality, we used that~$\cL_0$ projects injectively to~$\widehat H$, which is equivalent to $\pi^G(\cL)$ being dense in~$G$.
%
A standard estimate on the second term on the rhs of Eq.~\eqref{eq:decomp}, which uses $\supp(\widecheck f * \widetilde{\widecheck f})\subset K-K$, yields
\begin{displaymath}
\abs{{_{\widecheck f * \widetilde{\widecheck f}}}\omega(\ifu {B^c}\widecheck{h_W})}
\le _{\abs{\widecheck f * \widetilde{\widecheck f}}}\omega(\ifu {B^c}\abs{\widecheck{h_W}})
\le {_{\ifu {K-K}}}\omega(\ifu {B^c}\abs{\widecheck{h_W}})\cdot  \norm{\widecheck f * \widetilde{\widecheck f}}_\infty\le  {_{\ifu {K-K}}}\omega(\ifu {B^c}\abs{\widecheck{h_W}})\cdot \norm{f}_2^2 \,,
\end{displaymath}
where we used Young's inequality \cite[Thm.~20.18]{HeRo} for the last estimate. Summarizing the above estimates, we get
\begin{displaymath}
\sum_{\lambda\in \Lambda_W} \abs{f(\lambda)}^2 \ge \left( \theta_H(W\setminus \partial^U W) - {_{\ifu {K-K}}}\omega(\ifu {B^c}\abs{\widecheck{h_W}})\right) \norm{f}_2^2 \,.
\end{displaymath}
%
The term in brackets will be positive if~$W$ is a member of any van Hove sequence of sufficiently large index, compare Remark~\ref{rem:samplow}. Here we use that~$H$ is $\sigma$-compact.
\end{proof}

\section{Lower bounds and critical density}\label{sec:cdlb}

We analyze the above method for sampling using model sets in $G=\mathbb R$ which are simple, i.\,e., which satisfy $H=\mathbb R$ and $W\subset H$ an interval. We will see that the method used above does not give sharp results.

\subsection{Sampling upper bound}

We give an explicit upper bound for the statement in Lemma~\ref{lem:ubs}.
Choose centered intervals  $U=[-u,u]$  and $W=[-w,w]\subset H$, where $u,w>0$. Define $\varphi_U=\frac{1}{2u}\ifu U$ and $h= \ifu {W-U} *\varphi_U$. Then $h\in KL(H)$ is piecewise linear, and we have $h=1$ on~$W$ and $h=0$ on $(W+U)^c$. The transform of~$h$ is given by
\begin{displaymath}
\widehat h(k)= \widehat{\ifu {W-U}}(k) \cdot \widehat {\varphi_U}(k)
= \frac{\sin(2\pi (w+u)k)}{\pi k} \cdot \frac{\sin(2\pi u k)}{2\pi u k}\,.
\end{displaymath}
We have $\widehat h(0)=2(w+u)$, and we may later use the standard estimate
$\abs{\widehat h(k)}\le (2\pi^2 u k^2)^{-1}$.
Due to uniform discreteness of the projection set $_{K-K} \Lambda$, we may assume that $b>0$ is sufficiently small such that any translation of  $B=[-b,b)\subset \widehat H$ contains at most one point of~$_{K-K}  \Lambda$. Using an interval partition of~$\widehat H$ by shifted copies of~$B$,  we can estimate
\begin{displaymath}
\begin{split}
\omega_{\abs{\widecheck h}}(\ifu {B^c}\ifu {K-K})&= \sum_{\ell\in\mathbb N} {_{\ifu {K-K}} \omega}  (\abs{\widecheck h} \cdot (\ifu {B+2\ell b}+\ifu {B-2\ell b}))\le \sum_{\ell\in\mathbb N} \sup\{\abs{\widecheck h(k)}: k\in (B+2\ell b) \cup (B-2\ell b)\}\\
&\le 2\sum_{\ell\in\mathbb N} \sup\left\{\frac{1}{2\pi^2 u k^2}: k\in [b(2\ell-1), b(2\ell+1))\right\}
\le \sum_{\ell\in\mathbb N} \frac{1}{\pi^2 u b^2 (2\ell-1)^2}=\frac{1}{8ub^2}\,.
\end{split}
\end{displaymath}
Here we used $\sum_{\ell\in\mathbb N} \frac{1}{(2\ell-1)^{2}}=\pi^2/8$. Hence we may require the finite stable sampling upper bound to satisfy
\begin{displaymath}
C=\dens(\cL)\cdot\omega_{\abs{\widecheck h}}(\ifu {K-K})\le \dens(\cL)\cdot\left(2(w+u)+\frac{1}{8ub^2}\right)\,.
\end{displaymath}
At $u=1/(4b)$, the latter expression takes its minimum $\dens(\cL)\cdot(2w+1/b)$.

\subsection{Sampling lower bound} We may proceed similarly for the upper bound case. Choose centered intervals $W=[-w,w]\subset H$ and $U=[-u,u]$ where $0<u<w$. Define $\varphi_U=\frac{1}{2u}\ifu U$ and $h_W= \ifu {W\setminus \partial^UW} *\varphi_U$. Then $h_W\in KL(H)$ is piecewise linear, and we have $h_W=1$ on $[-(w-u),w-u]$ and $h_W=0$ on~$W^c$.
%
%
We have $\widehat{h_W}(0)=2(w-u)$, and as above we have the standard estimate $\abs{\widehat{h_W}(k)}\le (2\pi^2 u k^2)^{-1}$.
Due to uniform discreteness of the projection set~$_{K-K} \Lambda$, we may assume that $b>0$ is sufficiently small such that any translate of $B=[-b,b)\subset \widehat H$ contains at most one point of~$_{K-K} \Lambda$. Using an interval partition of~$\widehat H$ by shifted copies of~$B$,  we can estimate as above
\begin{displaymath}
\begin{split}
\omega_{\abs{\widecheck {h_W}}}(\ifu {B^c}\ifu {K-K})\le \frac{1}{8ub^2}\,.
\end{split}
\end{displaymath}
Using the Poisson Summation Formula, we thus get
\begin{displaymath}
\begin{split}
\frac{\omega_{\ifu W}(g)}{\dens(\cL)} &\ge \frac{\omega_{h_W}(g)}{\dens(\cL)} = \omega_{\widecheck g}(\ifu B\cdot \widecheck {h_W}) +
\omega_{\widecheck g}(\ifu {B^c}\cdot\widecheck {h_W}) \ge
\norm g_1 \cdot \norm{h_W}_1 - \abs{\omega_{\widecheck g}(\ifu {B^c}\cdot\widecheck {h_W}) }\\ &\ge \norm{g}_1 \left(2(w-u)-\frac{1}{8ub^2}\right) \,.
\end{split}
\end{displaymath}
A necessary condition for the rhs to be positive can be explicitly calculated as $2w b> 1$. Let us rephrase this result in terms of the density of the regular model set~$\Lambda_W$, which is given by  $\dens(\Lambda_W)=\dens(\cL)\cdot \theta_H(W)$, compare Fact~\ref{thm:df2}. Assume that~$K$ is a centered interval and that~$_{K-K} \Lambda$ has maximal~$b$, i.\,e., $(2b)^{-1}=\dens(_{K-K} \Lambda)=\dens(\cL_0) \cdot \theta_{\widehat G}(K-K)$. We then get
\begin{displaymath}
2w b> 1 \qquad \Longleftrightarrow \qquad \dens(\Lambda_W)>4\theta_{\widehat G}(K) \, .
\end{displaymath}
This is away from Landau's necessity condition $\dens(\Lambda_W)\ge \theta_{\widehat G}(K)$ by a factor of 4.

\section{Density formulae for weighted model combs}\label{sec:densf}

\subsection{Point densities}

Regular model sets have a uniform point density. This result can be seen as a consequence of the Poisson Summation Formula, an approach employed by Meyer  \cite{me70},  \cite[Sec.~V.7.3]{me72}, \cite[Prop.~5.1]{mm10}. The density formula for a weighted model comb~$\omega_h$  generalizes uniform point density of the underlying model set $\supp(\omega_h)$, see e.g\,.~\cite[Thm.~4.14]{RS17}.

%
%

\begin{fact}[Density formula for Riemann integrable weight functions]\label{thm:df}
Let $(G,H,\mathcal L)$ be a bare cut-and-project scheme. Assume that~$G$ is $\sigma$-compact and that~$\cL$ projects densely to~$H$.
If $h : H \to \mathbb C$ is Riemann integrable, then for every van Hove sequence sequence
$(A_n)_{n\in\mathbb N}$ in~$G$ the density formula holds, i.\,e., for every $s\in G$ we have
\begin{displaymath}
\lim_{n\to\infty} \frac{\omega_h(s+A_n)}{\theta_G(A_n)}=\dens(\mathcal L)\cdot \int_H h \dd\theta_H \,.
\end{displaymath}
The convergence is uniform in $s\in G$.
\qed
\end{fact}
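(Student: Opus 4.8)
The plan is to reduce the statement for general Riemann integrable $h$ to the special cases $h\in C_c(H)$ and $h=\ifu W$ with $W$ relatively compact and Riemann measurable, and then derive the general case by a sandwiching argument. For $h\in C_c(H)$ the density formula is essentially the Poisson Summation Formula of Lemma~\ref{lem:FTWMS} tested against the normalised characteristic functions $\ifu{A_n}/\theta_G(A_n)$: writing $\omega_h(s+A_n)/\theta_G(A_n)$ in terms of $\omega_{\widecheck h}$ via \eqref{form:PSF} and using Lemma~\ref{lem:rect}, which says $\widehat{\ifu{A_n}}/\theta_G(A_n)\to\ifu{\{1\}}$ pointwise, one picks out exactly the term $\dens(\cL)\cdot\widecheck h(1)=\dens(\cL)\cdot\int_H h\dd\theta_H$. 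One has to be slightly careful to push the limit inside the sum over $\cL_0$; this is where a uniformly integrable majorant is needed, and it is provided as in Remark~\ref{rem:int} by dominating $\widehat{\ifu{A_n}}/\theta_G(A_n)$ (after the usual van Hove smoothing) by a fixed function in $L^1$ of the relevant model comb, so that dominated convergence applies and, simultaneously, gives uniformity in the shift $s$.

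Next I would upgrade from $C_c(H)$ to indicator functions of relatively compact Riemann measurable sets $W$ by inner and outer regularity: since $\theta_H(\partial W)=0$, for every $\varepsilon>0$ one finds $\varphi_-,\varphi_+\in C_c(H)$ with $\varphi_-\le\ifu W\le\varphi_+$ and $\int_H(\varphi_+-\varphi_-)\dd\theta_H<\varepsilon$. Because $\omega_{\varphi_-}\le\omega_{\ifu W}\le\omega_{\varphi_+}$ as positive measures, applying the already-established $C_c$ case to $\varphi_\pm$ and letting $\varepsilon\to0$ gives the density formula for $\ifu W$, again uniformly in $s$. The general Riemann integrable $h$ then follows by splitting $h$ into real and imaginary, positive and negative parts and approximating each from above and below by finite linear combinations of indicators of Riemann measurable relatively compact sets (this is exactly what Riemann integrability of $h$, i.e.\ approximability by step functions whose discontinuity sets are $\theta_H$-null, buys us), using monotonicity of $\omega_{(\cdot)}$ on nonnegative weights and linearity of the limit functional.

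The main obstacle is the interchange of limit and summation over the lattice $\cL_0$ in the $C_c(H)$ step, together with getting the convergence to be \emph{uniform in the shift} $s\in G$ rather than merely pointwise: Lemma~\ref{lem:rect} is only a pointwise statement, so one must supply a uniform integrable domination. The fix is the one indicated in Remark~\ref{rem:int}: replace $\ifu{A_n}$ by $\ifu{A_n\setminus\partial^V\!A_n}*\varphi_V$ for a fixed Dirac-type $\varphi_V$, note the van Hove property makes the error negligible, observe that the smoothed normalised kernels are dominated by the single function $\widehat{\varphi_V}\in LK(G)$, and check $\widehat{\varphi_V}\in L^1(\omega_{\abs{\widecheck h}})$ (via majorising by an element of $KL$ and Lemma~\ref{lem:FTWMS}). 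Dominated convergence then yields convergence uniformly in $s$, since the dominating function does not depend on $s$ and the translates $A_n+s$ form van Hove sequences uniformly. The remaining steps — regularity approximation and the step-function reduction — are routine once the core case is in hand.
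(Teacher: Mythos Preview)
The paper does not actually prove Fact~\ref{thm:df}; it is stated as a known result (note the terminal \qed) and attributed to \cite[Thm.~4.14]{RS17}, so there is no in-paper proof to compare against directly. However, the paper does prove the closely analogous Proposition~\ref{denssm}, and that argument exhibits the intended method: first establish the formula for $h\in K_2(H)=\lin\{\varphi*\widetilde\varphi:\varphi\in C_c(H)\}$, where $\omega_h$ is twice transformable and a Fourier--Bohr type argument applies, and then pass to general real Riemann integrable~$h$ by a \emph{single} sandwiching step, choosing $f_1,f_2\in K_2(H)$ with $f_1\le h\le f_2$ and $\int(f_2-f_1)\dd\theta_H$ arbitrarily small.

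Your plan follows the same architecture --- core case via the PSF plus dominated convergence, then monotone approximation --- but takes a longer route through an intermediate indicator-function layer ($C_c(H)\to\ifu W\to$ step functions $\to$ Riemann integrable); this is unnecessary, since Riemann integrability already yields direct sandwiching by $K_2(H)$ functions. One technical caveat: Lemma~\ref{lem:FTWMS} requires $h\in KL(H)\cup LK(H)$, which is not automatic for arbitrary $h\in C_c(H)$; starting instead from $h\in K_2(H)\subset KL(H)$ resolves this cleanly and is exactly what the paper does. Your handling of the limit interchange and uniformity in~$s$ via van Hove smoothing and domination (following Remark~\ref{rem:int}) is correct in spirit and parallels the argument used for Proposition~\ref{prop:FBsmooth}.
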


The density formula implies that any regular model set $\Lambda_W$ has a uniform point density. Indeed, then $h=\ifu W$ is Riemann integrable. For a general weak model set, only upper and lower estimates can be given. The following result can  be proved by approximation with regular model sets using the density formula, compare \cite[Prop.~3.4]{HR15}.
A version for $G=\mathbb R$ was already used by Meyer  in \cite[Rem.~6.2]{me73}.

\begin{fact}\label{thm:df2}
Let $(G,H,\mathcal L)$ be a bare cut-and-project scheme. Assume that~$G$ is $\sigma$-compact and that~$\cL$ projects densely to~$H$.
For any weak model set~$\Lambda_W$ in $(G,H,\cL)$, any van Hove sequence~$\cA$ in~$G$ and any $t\in H$, we have the estimates
\begin{equation}\label{eq:denswms}
\dens(\cL) \cdot \theta_H(\inn{W})\le \cd^-_\cA(\Lambda_{W+t}) \le  \cd^+_\cA(\Lambda_{W+t}) \le \dens(\cL) \cdot \theta_H(\cl{W})\,.
\end{equation}
In particular if~$W$ is Riemann measurable, then $\cd(\Lambda_{W+t})=\dens(\cL) \cdot \theta_H(W)$.
\qed
\end{fact}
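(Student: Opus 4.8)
The plan is to run a two-sided squeeze: approximate the window $W$ from outside and from inside by Riemann measurable windows to which Fact~\ref{thm:df} applies, pass to densities, and let the approximations close in. Throughout I would write $s$ for the $G$-translation variable in the definition of $\cd^\pm_{\cA}$, to keep it apart from the $H$-translation $t$ in $\Lambda_{W+t}$; recall that translation by $t$ is a homeomorphism, so $\cl{W+t}=\cl W+t$ and $\inn{W+t}=\inn W+t$, and that for a relatively compact $V\subset H$ one has $\mcard(\Lambda_V\cap(A_n+s))=\omega_{\ifu V}(A_n+s)$ because $\pi_G|_{\cL}$ is injective, which is part of the standing assumption that $\Lambda_W$ is a weak model set. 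The middle inequality $\cd^-_{\cA}(\Lambda_{W+t})\le\cd^+_{\cA}(\Lambda_{W+t})$ is immediate from the definitions, so only the two outer inequalities need work.

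For the upper bound, fix $\varepsilon>0$. First I would use outer regularity of $\theta_H$ on the finite-measure set $\cl W+t$ to get a relatively compact open $O\supset\cl W+t$ with $\theta_H(O)<\theta_H(\cl W)+\varepsilon$, and by Urysohn's lemma a function $g\in C_c(H)$ with $g\equiv1$ on $\cl W+t$ and $\supp g\subset O$. The superlevel sets $\{g>a\}$, $a\in(0,1)$, are open with $\partial\{g>a\}\subset\{g=a\}$; since the level sets $\{g=a\}$ are pairwise disjoint subsets of the finite-measure set $\supp g$, only countably many of them have positive measure, so I may fix $a\in(0,1)$ with $\theta_H(\partial\{g>a\})=0$. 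Then $V:=\{g>a\}$ is relatively compact and Riemann measurable with $W+t\subset V\subset O$, hence $\Lambda_{W+t}\subset\Lambda_V$ and $\mcard(\Lambda_{W+t}\cap(A_n+s))\le\omega_{\ifu V}(A_n+s)$ for all $n$ and all $s$. Taking $\sup_s$, then $\limsup_n$, and invoking Fact~\ref{thm:df} for the Riemann integrable weight $\ifu V$ (whose convergence is uniform in $s\in G$) yields $\cd^+_{\cA}(\Lambda_{W+t})\le\dens(\cL)\cdot\theta_H(V)<\dens(\cL)\cdot(\theta_H(\cl W)+\varepsilon)$; letting $\varepsilon\downarrow0$ gives the right-hand inequality in \eqref{eq:denswms}.

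The lower bound is symmetric. I may assume $\inn W\neq\emptyset$, since otherwise the inequality reads $0\le\cd^-_{\cA}(\Lambda_{W+t})$. For $\varepsilon>0$, inner regularity of $\theta_H$ on the open finite-measure set $\inn W+t$ provides a compact $C\subset\inn W+t$ with $\theta_H(C)>\theta_H(\inn W)-\varepsilon$, and Urysohn's lemma a $g\in C_c(H)$ with $g\equiv1$ on $C$ and $\supp g\subset\inn W+t$; arguing with the closed sublevel sets $\{g\ge a\}$ (whose boundary also lies in $\{g=a\}$) I can choose $a\in(0,1)$ so that $V:=\{g\ge a\}$ is compact, Riemann measurable, and $C\subset V\subset\inn W+t$. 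Then $\Lambda_V\subset\Lambda_{W+t}$, so $\mcard(\Lambda_{W+t}\cap(A_n+s))\ge\omega_{\ifu V}(A_n+s)$; taking $\inf_s$, then $\liminf_n$, and applying Fact~\ref{thm:df} to $\ifu V$ gives $\cd^-_{\cA}(\Lambda_{W+t})\ge\dens(\cL)\cdot\theta_H(V)\ge\dens(\cL)\cdot(\theta_H(\inn W)-\varepsilon)$; let $\varepsilon\downarrow0$. Finally, if $W$ is Riemann measurable then $\theta_H(\partial W)=0$ forces $\theta_H(\inn W)=\theta_H(W)=\theta_H(\cl W)$, so \eqref{eq:denswms} collapses to $\cd(\Lambda_{W+t})=\dens(\cL)\cdot\theta_H(W)$.

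Essentially everything here is bookkeeping: monotonicity of the counting function, regularity of the Haar measure, and the uniform density formula of Fact~\ref{thm:df}. The one point I expect to need a line of care is the construction of the Riemann measurable sandwiching windows — specifically, justifying that the level $a$ can be chosen so that $\partial\{g>a\}$ (respectively $\partial\{g\ge a\}$) is $\theta_H$-null, which is where local compactness of $H$ and finiteness of $\theta_H$ on $\supp g$ enter. With that in hand the squeeze is routine.
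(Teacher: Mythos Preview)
Your proof is correct and follows exactly the approach the paper indicates: approximate the window by Riemann measurable sets from inside and outside, apply the density formula Fact~\ref{thm:df}, and squeeze. The paper does not spell out a proof but refers to \cite[Prop.~3.4]{HR15}; your argument is a faithful implementation of that reference, with the minor difference that you construct the Riemann measurable sandwiching windows directly via level sets of an Urysohn function (the same device used in \cite[Lem.~3.6]{HR15}, which the paper invokes elsewhere for exactly this purpose).
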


\subsection{Density formula for continuous test functions}

The density formula Fact~\ref{thm:df} expresses that the weighted model comb~$\omega_h$ has a uniform density.
Indeed we have $\omega_h(g_n)\to \dens(\cL)\cdot \theta_H(h)$ where $g_n=\ifu {s+A_n}/\theta_G(A_n)$.
Below we need to compute the limit in the density formula on a sequence of continuous test functions $(g_n)_n$ of unbounded support. If~$G$ is $\sigma$-compact, then such sequences may be obtained using a Dirac sequence $(v_n)_n$ in~$\widehat G$, as the sequence $(\widehat v_n)_n$ then converges locally uniformly to 1, see \cite[Lem.~3.4.5]{DE}. We have the following result.

\begin{prop}[Density formula with continuous averaging functions]\label{denssm}
Let $(G,H,\cL)$ be a bare cut-and-project scheme. Assume that~$G$ is $\sigma$-compact and that~$\cL$ projects densely to~$H$. Take a Dirac sequence $(v_n)_{n\in\mathbb N}$ in~$\widehat G$ and define $g_n\in L^1(G)$ by
\begin{displaymath}
g_n=\frac{\abs{\widehat{v_n}}^2}{\norm[L^2(G)]{\widehat{v_n}}^2}\,.
\end{displaymath}
Then for any Riemann integrable $h: H\to\mathbb C$ and for every $s\in G$ we have
\begin{displaymath}
\lim_{n\to\infty} \omega_h(\delta_s*g_n)=\dens(\mathcal L)\cdot \int_H h \dd\theta_H\,.
\end{displaymath}
The convergence is uniform in $s\in G$.
\end{prop}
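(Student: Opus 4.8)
The plan is to deduce the statement from the density formula for van Hove sequences, Fact~\ref{thm:df}, by writing the continuous averaging function $g_n$ as a suitable mixture of normalized indicators of van Hove sets and controlling the error. First I would recall that by \cite[Lem.~3.4.5]{DE} the net $(\widehat{v_n})_n$ converges to~$1$ locally uniformly, so $g_n\ge 0$, $g_n\in L^1(G)$ with $\int_G g_n\dd\theta_G = 1$, and $g_n$ concentrates its mass near the identity in the sense that for every compact $C\subset G$ one has $\int_C g_n\dd\theta_G\to 1$. (This last point uses $\sigma$-compactness of~$G$ and that $\widehat{v_n}\to 1$ locally uniformly together with $\widehat{v_n}\in L^2$; it is essentially the statement that $g_n$ is an approximate identity when tested against $C_0$-functions, but here I want it against the \emph{unbounded} comb~$\omega_h$.) The translation $\delta_s*g_n$ just shifts the argument, and since the convergence in Fact~\ref{thm:df} is uniform in the shift, the uniformity in $s\in G$ in the conclusion will come for free once the unshifted case is handled; so I would fix $s=e$ for the main argument.

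The core step is to compare $\omega_h(g_n)$ with $\dens(\cL)\int_H h\dd\theta_H$. I would first reduce to $h\in C_c(H)$, $h\ge 0$: a general Riemann integrable $h$ is sandwiched between Riemann integrable functions that are, say, continuous with compact support up to arbitrarily small $\theta_H$-error (using that Riemann integrability means the discontinuity set is $\theta_H$-null and that $h$ is bounded with relatively compact essential support in the windows we care about), and both $\omega_h$ and the limit depend monotonically and continuously enough on $h$ for this squeeze to pass. With $h\in C_c(H)$ non-negative fixed, $\omega_h$ is a translation bounded positive measure, hence $\omega_h(g_n)$ makes sense and I can estimate it by splitting $g_n = g_n\ifu{C} + g_n\ifu{C^c}$ for a large compact $C\subset G$. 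On $C$ I approximate $g_n$ uniformly from above and below by step functions that are finite non-negative combinations of indicators of members of a fixed van Hove sequence (translated), apply Fact~\ref{thm:df} to each such indicator, and use $\int g_n \to 1$ on $C$. The tail $g_n\ifu{C^c}$ contributes at most $\|\omega_h\|_{\infty\text{-loc}}$-type bounds times $\int_{C^c} g_n$, which is small uniformly in $n$ large once $C$ is large, because $\omega_h$ is translation bounded and $g_n$ puts asymptotically negligible mass on $C^c$. Letting $C\uparrow G$ and then $n\to\infty$ gives the claim for $h\in C_c(H)$, and undoing the reduction gives it for Riemann integrable $h$.

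The main obstacle I expect is the tail estimate: $\omega_h$ is not a finite measure, so $\omega_h(g_n\ifu{C^c})$ is not automatically controlled by $\|g_n\ifu{C^c}\|_1$ alone — one needs that $\omega_h$ is translation bounded and that $g_n$ spreads out in a controlled way. Here I would use that $g_n = |\widehat{v_n}|^2/\|\widehat{v_n}\|_2^2$ is, up to normalization, the Fourier transform of $v_n * \widetilde{v_n}$, a non-negative positive-definite $L^1$-function; combined with $\omega_h\in\M^\infty(G)$ for $h\in C_c(H)$ (noted in the excerpt, via $|\omega_h|=\omega_{|h|}$), a covering of $C^c$ by unit translates yields $\omega_h(g_n\ifu{C^c}) \le (\sup_{t}\omega_{|h|}(t+B_0))\cdot\big(\text{number of translates weighted by }\sup_{\text{cell}} g_n\big)$, and this is controlled by $\int_{C^c} g_n\dd\theta_G$ up to a fixed constant because $g_n$ is continuous and translation of its mass is quantified by the local-uniform convergence $\widehat{v_n}\to 1$. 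Making this last inequality precise — replacing "$\sup$ over cells" by "$\int$ over a slightly enlarged region" using that $g_n$ is an $L^1$ approximate identity — is the delicate point; everything else is a routine squeeze via Fact~\ref{thm:df}.
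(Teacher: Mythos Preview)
Your approach rests on a fundamental misreading of the behaviour of $g_n$. You claim that $g_n$ ``concentrates its mass near the identity in the sense that for every compact $C\subset G$ one has $\int_C g_n\dd\theta_G\to 1$'', but the opposite is true: $g_n$ \emph{spreads out}, and in fact $\int_C g_n\dd\theta_G\to 0$ for every compact $C$ when $G$ is non-compact. To see this, note that $\widecheck{g_n}=(v_n*\widetilde{v_n})/\norm[L^2(\widehat G)]{v_n}^2\in C_c(\widehat G)$ satisfies $\widecheck{g_n}(1)=1$, $0\le\widecheck{g_n}\le 1$, and has support shrinking to the trivial character; thus $g_n$ is the Fourier transform of a Dirac-type sequence on $\widehat G$, i.e.\ a Fej\'er-type kernel that delocalises on $G$. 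Concretely, $\abs{\widehat{v_n}}^2\to 1$ locally uniformly while $\norm[L^2(G)]{\widehat{v_n}}^2=\norm[L^2(\widehat G)]{v_n}^2\to\infty$ (Cauchy--Schwarz: $1=\int v_n\le\norm{v_n}_2\,\theta_{\widehat G}(\supp v_n)^{1/2}$), so $g_n\to 0$ uniformly on compacta. This breaks both pillars of your plan: the ``bulk on $C$'' carries asymptotically \emph{zero} mass, the ``tail on $C^c$'' carries asymptotically \emph{all} of it, and hence neither the approximation of the bulk via Fact~\ref{thm:df} nor the tail estimate can work as stated. The functions $g_n$ are to be thought of as continuous analogues of $\ifu{A_n}/\theta_G(A_n)$, not as an approximate identity.

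The paper's proof takes a completely different, Fourier-analytic route and does not reduce to Fact~\ref{thm:df}. It first treats $h=\varphi*\widetilde\varphi$ with $\varphi\in C_c(H)$, for which $\omega_h$ is twice transformable and the Poisson Summation Formula gives $\widehat{\omega_h}=\dens(\cL)\cdot\omega_{\widecheck h}$. The key step is a Fourier--Bohr coefficient formula (Proposition~\ref{prop:FBsmooth}): for any twice transformable $\mu\in\M^\infty(G)$ one has $\widehat\mu(\{1\})=\lim_n\mu(\delta_{-s}*g_n)$ uniformly in $s$, obtained by writing $\mu(\delta_{-s}*g_n)=\widehat\mu(e_s\cdot\widecheck{g_n})$ and using dominated convergence on the $\widehat G$-side, since $\widecheck{g_n}\to\ifu{\{1\}}$ pointwise with $0\le\widecheck{g_n}\le 1$. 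Applied to $\mu=\omega_h$, together with injectivity of $\pi_{\widehat G}|_{\cL_0}$ (equivalent to denseness of $\pi_H(\cL)$), this gives $\lim_n\omega_h(\delta_s*g_n)=\dens(\cL)\cdot\widecheck h(1)=\dens(\cL)\int_H h\dd\theta_H$. General Riemann-integrable $h$ is then handled by sandwiching between functions in $K_2(H)=\lin\{\varphi*\widetilde\varphi:\varphi\in C_c(H)\}$, much as in your reduction step. So the limit is taken on the dual side via $\widecheck{g_n}\to\ifu{\{1\}}$, not by any compact exhaustion on $G$.
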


\medskip

The proof of Proposition \ref{denssm} is based on a version of the Fourier-Bohr coefficient formula, compare \cite[Prop.~3.12]{RS15}, for continuous averaging functions.

\begin{prop}[Fourier-Bohr coefficients with continuous averaging functions]\label{prop:FBsmooth}
Let~$G$ be a $\sigma$-compact LCA group. Take any Dirac sequence $(v_n)_{n\in\mathbb N}$ in~$\widehat G$ and define $g_n\in L^1(G)$ by
\begin{displaymath}
g_n=\frac{\abs{\widehat{v_n}}^2}{\norm[L^2(G)]{\widehat{v_n}}^2}\,.
\end{displaymath}
Let $\mu\in\mathcal M^\infty(G)$ be twice transformable and consider $\chi\in\widehat G$. We then have for every $s\in G$
\begin{displaymath}
\widehat \mu(\{\chi\})=\lim_{n\to\infty} \int_{G} g_n(s+x) \overline{\chi}(x) \dd \mu(x) \,.
\end{displaymath}
The convergence is uniform in $s\in G$.
\end{prop}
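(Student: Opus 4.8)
The plan is to recognize the integral $\int_G g_n(s+x)\overline\chi(x)\dd\mu(x)$ as $(\mu * \widetilde{g_n e_\chi})$-type quantity and reduce everything to a statement about the Fourier transform $\widehat\mu$, which is a translation-bounded measure on $\widehat G$ since $\mu$ is twice transformable. Concretely, write $g_n = \abs{\widehat{v_n}}^2/\norm[L^2(G)]{\widehat{v_n}}^2 = \widehat{v_n}\cdot\overline{\widehat{v_n}}/\norm[L^2(G)]{\widehat{v_n}}^2$. Since $v_n\in C_c(\widehat G)$, the product $v_n * \widetilde{v_n}$ is a compactly supported continuous function on $\widehat G$ whose inverse transform equals $\abs{\widehat{v_n}}^2$; after the normalization, $g_n = \widecheck{w_n}$ where $w_n := (v_n*\widetilde{v_n})/\norm[L^2(G)]{\widehat{v_n}}^2 \in C_c(\widehat G)$ is a nonnegative approximate identity at the point $1\in\widehat G$, i.e.\ $w_n\to\delta_1$ vaguely with total mass tending to $1$ (the mass is $\norm[L^2(\widehat G)]{v_n}^2/\norm[L^2(G)]{\widehat{v_n}}^2 = 1$ by Plancherel, actually exactly $1$). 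The shift by $s$ and the twist by $\overline\chi$ correspond on the Fourier side to multiplication by $e_s$ and translation by $\chi$ respectively, so one expects $\int_G g_n(s+x)\overline\chi(x)\dd\mu(x)$ to converge to the point mass $\widehat\mu(\{\chi\})$.

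The key steps, in order: (1) Use double transformability of $\mu$ to write, for the test function $g_n e_{-\chi}$ (which lies in a suitable test-function space because $\widehat{g_n}=w_n^\vee$-type object has compact support), the identity $\int_G g_n(s+x)\overline{\chi(x)}\dd\mu(x) = \int_{\widehat G}(\text{Fourier side of }\delta_s*(g_n e_{-\chi}))\dd\widehat\mu$. Carefully, $\widecheck{g_n e_{-\chi}}(\eta) = \widecheck{g_n}(\eta\chi^{-1})$ and $\widehat{g_n} = \widetilde{w_n}$ up to reflection, so the transform of $\delta_s * (g_n\overline\chi)$ is $e_{-s}(\eta)\,w_n'(\chi^{-1}\eta)$ for an appropriate reflected/normalized $w_n'$ supported near $\chi$. (2) Thus the integral equals $\int_{\widehat G} \overline{\eta(s)}\, w_n'(\chi^{-1}\eta)\dd\widehat\mu(\eta)$. (3) Split this as the ``diagonal'' contribution $\widehat\mu(\{\chi\})\cdot\overline{\chi(s)}\cdot w_n'(1)$ plus the off-diagonal remainder $\int_{\widehat G\setminus\{\chi\}}\overline{\eta(s)}w_n'(\chi^{-1}\eta)\dd\widehat\mu(\eta)$. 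Since $w_n'(1)\to 1$ and we may harmlessly absorb the unimodular factor $\overline{\chi(s)}$ (or include it in the statement), the diagonal term tends to $\widehat\mu(\{\chi\})$. (4) Bound the remainder uniformly in $s$ by $\int_{\widehat G} \abs{w_n'(\chi^{-1}\eta)}\ifu{\widehat G\setminus\{\chi\}}(\eta)\dd\abs{\widehat\mu}(\eta)$; because $\supp(w_n')$ shrinks to $\{\chi\}$ and $\abs{\widehat\mu}$ is translation bounded, dominated convergence (dominating $\abs{w_n'}$ by a fixed compactly supported function, using that the $w_n'$ are a bounded approximate identity) forces this to $0$. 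The uniformity in $s$ is automatic because the only $s$-dependence sits in the modulus-one factor $\overline{\eta(s)}$, which is bounded by $1$.

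The main obstacle is Step (1): justifying that $g_n e_{-\chi}$ (equivalently $\delta_s * (g_n\overline\chi)$) is an admissible test function for the twice-transformable measure $\mu$ and writing down the correct Parseval-type identity $\mu(f) = \widehat\mu(\widecheck f)$ with all the reflection, conjugation, and normalization bookkeeping correct. One needs $g_n\in C_0(G)\cap L^1(G)$ with $\widehat{g_n}\in C_c(\widehat G)$, i.e.\ $g_n\in LK(G)$, which holds since $g_n=\abs{\widehat{v_n}}^2/\norm[L^2(G)]{\widehat{v_n}}^2$ with $v_n\in C_c(\widehat G)$ gives $\widehat{g_n}\in C_c(\widehat G)$; then the identity $\mu(\delta_s * (g_n\overline\chi)) = \widehat\mu\bigl(\text{transform}\bigr)$ is exactly the statement that $\widehat\mu$ represents $\mu$ on such functions, which follows from double transformability (cf.\ the Parseval formula used implicitly around Lemma~\ref{lem:FTWMS}). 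Once the identity is set up cleanly, the rest is the standard ``approximate-identity kills everything except the atom'' argument, with the translation-boundedness of $\abs{\widehat\mu}$ supplying the integrable dominating function needed for the dominated convergence in Step (4).
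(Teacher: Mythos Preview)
Your plan is essentially the paper's proof. Both arguments recognize that $\widecheck{g_n}=(v_n*\widetilde{v_n})/\norm[L^2(\widehat G)]{v_n}^2\in C_c(\widehat G)$ is a pointwise-bounded approximate identity at $1$, invoke the Parseval identity $\mu(h)=\widehat\mu(\widecheck h)$ (valid because $\widehat\mu$ is transformable, i.e.\ $\mu$ is twice transformable), and then let dominated convergence on the $\widehat\mu$-side isolate the atom; uniformity in $s$ comes for free since $s$ only appears through the unimodular factor $e_s$.

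One organizational difference worth noting: the paper first reduces to $\chi=1$ via $\widehat{\overline\chi\mu}=\delta_{\chi^{-1}}*\widehat\mu$, which cleans up exactly the bookkeeping you flag in Step~(3). Your stray factor $\overline{\chi(s)}$ is an artifact of a missing $\chi(s)$: carrying the twist through, $\widecheck{h}(\eta)=\chi(s)\,\overline{\eta(s)}\,\widecheck{g_n}(\eta\chi^{-1})$ for $h(x)=g_n(s+x)\overline{\chi(x)}$, so at $\eta=\chi$ the diagonal term is $\chi(s)\overline{\chi(s)}\widecheck{g_n}(1)\,\widehat\mu(\{\chi\})=\widehat\mu(\{\chi\})$ on the nose, not $\overline{\chi(s)}\,\widehat\mu(\{\chi\})$. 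So nothing needs to be ``absorbed''; the reduction to $\chi=1$ just spares you this calculation. For the domination in Step~(4), the paper uses the clean bound $0\le\widecheck{g_n}\le1$ (Young's inequality) together with the fact that $\supp(\widecheck{g_n})$ eventually lies in a fixed compact $K$, giving $\abs{\widehat\mu}(\abs{\widecheck{g_n}})\le\abs{\widehat\mu}(K)<\infty$; this is precisely the ``fixed compactly supported dominating function'' you allude to.
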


\begin{proof}[Proof of Proposition~\ref{prop:FBsmooth}]
We prove the proposition for $\chi=1$, where~$1$ denotes the trivial character. The general case then follows from $\widehat \mu(\{\chi\})=(\delta_{\chi^{-1}}*\widehat \mu)(\{1\})$ and $(\delta_{\chi^{-1}} *\widehat\mu)=\widehat {\overline \chi \mu}$, where $*$ denotes measure convolution. Recall that~$\widehat G$ is metrizable since~$G$ is $\sigma$-compact. Hence there exists a Dirac sequence $(v_n)_{n\in \mathbb N}$ in $\widehat G$.  Note first that
\begin{displaymath}
\widecheck{g_n}=\frac{v_n*\widetilde{v_n}}{\norm[L^2(\widehat G)]{v_n}^2} \in C_c(\widehat G) \, .
\end{displaymath}
%
We also have $\widecheck{g_n}(1)=1$ and $0\le \widecheck{g_n} \le 1$ by Young's inequality.
The function $\widecheck{g_n}$ is positive definite and compactly supported, and its Fourier transform~$g_n$ is integrable.
As $\widehat \mu\in\mathcal M(\widehat G)$ is transformable, we thus have
\begin{equation}\label{eqn:PSFFB}
\widehat{\mu}\left( e_s\cdot \widecheck{g_n}\right)=\widecheck{\widehat{\mu}}(\delta_{-s}*g_n)=\mu( \delta_{-s}*g_n)
\end{equation}
by the inversion theorem for transformable measures \cite[Thm.~3.4]{ARMA1} and by definition of the Fourier transform \cite[Ch.~2]{ARMA1}. In Eqn.~\eqref{eqn:PSFFB}, recall that~$e_s$ is the exponential $e_s(\chi)=\chi(s)$.
Let us consider the limit $n\to\infty$ on the lhs.  We first note $\widecheck{g_n}(\chi)\to \ifu {\{1\}}$ pointwise. Indeed, this is obvious for $\chi=1$, and for $\chi\ne 1$ we choose any compact unit neighborhood~$K$ in~$\widehat G$ such that $\chi\notin K$. Then $\supp(\widecheck{g_n})\subset K$ for finally all~$n$, such that $\int_G g_n(x)\chi(x) \dd\theta(x)=\widecheck{g_n}(\chi)=0$ for finally all~$n$. 
%
%
As $\abs{e_s\cdot\widecheck{g_n}-\ifu {\{1\}}}=\abs{\widecheck{g_n}-\ifu {\{1\}}}\le\abs{\widecheck{g_n}}\le 1$,
we can write
\begin{displaymath}
\abs{\widehat\mu (e_s\cdot \widecheck{g_n})-\widehat\mu (\ifu {\{1\}})}\le
\abs{\widehat \mu}(\abs{s\cdot \widecheck{g_n}-\ifu {\{1\}}})=
\abs{\widehat \mu}(\abs{ \widecheck{g_n}-\ifu {\{1\}}})\le
\abs{\widehat \mu}(\abs{\widecheck{g_n}})\le
\abs{\widehat\mu}(K) \cdot \norm{\widecheck{g_n}}_\infty\le \abs{\widehat\mu}(K)
\end{displaymath}
for finally all~$n$. We can thus apply Lebesgue's dominated convergence theorem to obtain
$\widehat\mu (e_s\cdot \widecheck{g_n})\to \widehat\mu (\ifu {\{1\}})=\widehat \mu(\{1\})$ as $n\to\infty$ uniformly in $s\in G$.
\end{proof}

\begin{proof}[Proof of Proposition \ref{denssm}]
Abbreviate $c=\dens(\mathcal L)>0$, take  $\varphi\in C_c(H)$ and consider $h=\varphi*\widetilde \varphi\in C_c(H)$. Then~$\omega_h$ is twice transformable by \cite[Thm.~4.12]{RS15}.  We can now argue
\begin{displaymath}
c\cdot\widecheck{h}(1)=c\cdot \omega_{\widecheck{h}}(\{ 1\})=\widehat{\omega_h}(\{1\}) =\lim_{n\to\infty}  \omega_h(\delta_s*g_n)
\end{displaymath}
uniformly in $s\in G$.
In the first equation we used that~$\omega_{\widecheck h}$ is a measure and that~$\cL_0$ projects injectively to~$\widehat G$, which is equivalent to~$\cL$ projecting densely to~$H$ by Pontryagin duality. In the second equation, we used the generalized Poisson Summation Formula $\widehat{\omega_h}=c\cdot \omega_{\widecheck{h}}$, see Lemma~\ref{lem:FTWMS}, and for the third equation we used Proposition~\ref{prop:FBsmooth} as $\omega_h\in\mathcal M^\infty(G)$ is twice transformable.

The general case follows by a standard approximation argument. Consider without loss of generality any Riemann integrable $h:H\to \mathbb R$ and let $\varepsilon >0$.
Recall from \cite{ARMA1} that $K_2(H)=\lin\{\varphi*\widetilde \varphi: \varphi\in C_c(H)\}$ is dense in $C_c(H)$ with respect to the supremum norm.  Since~$h$ is Riemann integrable, there exist $f_1,f_2 \in K_2(H)$ such that $f_1 \leq h \leq f_2 $ and $\int (f_2 -f_1)\dd\theta_H  \le \frac{\varepsilon}{2c}$.
By the above result
there exists~$N$ such that for all $n \ge N$ and $i\in\{1,2\}$ we have
\begin{displaymath}
\abs*{ \omega_{f_i}(\delta_s*g_n)-c \int f_i \dd\theta_H }
\le \frac{\varepsilon}{2}\,,
\end{displaymath}
uniformly in $t\in G$. Thus, as $\omega_{f_1} \leq \omega_h \leq \omega_{f_2}$, for all $n \geq N$ we have
\begin{eqnarray*}
\begin{aligned}
&c \int h  \dd\theta_H - \omega_{h}(\delta_s*g_n)  \leq
c \int f_2  \dd\theta_H - \omega_{f_1}(\delta_s*g_n)   \leq
\frac{\varepsilon}{2}+c \int f_1  \dd\theta_H - \omega_{f_1}(\delta_s*g_n) \le \varepsilon \,,
\end{aligned}
\end{eqnarray*}
and similarly
\begin{eqnarray*}
\begin{aligned}
&c \int h  \dd\theta_H - \omega_{h}(\delta_s*g_n) \ge
-\varepsilon \,,
\end{aligned}
\end{eqnarray*}
both uniformly in $s\in G$. Hence the claim of the proposition follows.
\end{proof}

\section{Sampling and interpolation duality}\label{sec:dualityms}

\subsection{Statement and discussion of the duality theorem}\label{sec:statedis}

Recall that $\CS_K=\{f\in L^1(G)\cap L^2(G) :  \widehat f |_{K^c}=0 \}$ is a subspace of $\PW_K$, which is dense in~$\PW_K$ if~$K$ is Riemann measurable. 
The duality theorem relates, under suitable assumptions, sampling on~$\Lambda_W$ for  $\CS_{K}$  to interpolation from~$_{K} \Lambda$ for~$\PW_{W}$.
We say that~$\Lambda$ is stable sampling for~$\CS_K$ if Eqn.~\eqref{eq:stablesampling} holds for any $f\in \CS_K$.

\begin{theorem}[Sampling-interpolation duality for model sets]\label{thm:main}
	Let $(G,H,\mathcal L)$ be a complete cut-and-project scheme.
	Let $W\subset H$ and $K\subset \widehat G$ both be relatively compact and measurable. Then the following hold.
  \begin{enumerate}
    \item[(i)] Assume that~$G$ is $\sigma$-compact. Take any relatively compact and measurable neighborhood~$K'$ of~$K$.   If $\Lambda_W\subset G$ is stable sampling for $\CS_{K'}\subset L^2(G)$, then $_{K} \Lambda\subset \widehat H$ is stable interpolating for $\PW_{W}\subset L^2(\widehat H)$.
  \item[(ii)] Assume that~$H$ is metrizable. If $_{K} \Lambda \subset \widehat H$ is stable interpolating for $\PW_{W}\subset L^2(\widehat H)$, then $\Lambda_{\cl{W}} \subset G$ is stable sampling for $\CS_K\subset L^2(G)$.
  \end{enumerate}

\end{theorem}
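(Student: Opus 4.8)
The strategy is to transport the sampling/interpolation inequalities between the two cut-and-project schemes via the Poisson Summation Formula of Lemma~\ref{lem:FTWMS}, using the duality $\widehat G \leftrightarrow G$, $\widehat H\leftrightarrow H$, $\cL_0\leftrightarrow \cL$, and the identification of the relevant exponential sums with weighted model combs. The key observation is that for a finitely supported $\varphi\from{}_K\Lambda\to\C$, the quantity $\norm[L^2(\widehat H)]{\ifu W f_\varphi}^2$ appearing in the interpolation inequality \eqref{eq:si} can be written as a double sum $\sum_{\eta,\eta'} \varphi(\eta)\overline{\varphi(\eta')} \widecheck{\ifu W}(\eta-\eta')$ over points of ${}_K\Lambda - {}_K\Lambda\subset {}_{K-K}\Lambda$; and dually, the sampling quantity $\sum_{\lambda\in\Lambda_W}\abs{f(\lambda)}^2$ for $f\in\CS_K$ is $\omega_{\ifu W}(g)$ with $g=\abs f^2\in LK(G)$, which the Poisson Summation Formula turns into $\dens(\cL)\cdot\omega_{\widecheck{\ifu W}}(\widecheck g)$ — a sum over $\cL_0$, i.e. over the dual scheme. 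So both sides of the duality are expressions of ``the same'' comb $\omega_{\widecheck{\ifu W}}$ tested against different functions.

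\textbf{Part (i).} Assume $\Lambda_W$ is stable sampling for $\CS_{K'}$, with $K'$ a relatively compact measurable neighborhood of $K$. Given finitely supported $\varphi\from {}_K\Lambda\to\C$, I would build a test function $f\in\CS_{K'}$ out of $\varphi$ whose values on $\Lambda_W$ encode $\varphi$. Concretely, I would use a smooth bump $u\in C_c(\widehat G)$ with $\widehat u\in L^1(G)$, supported near the identity so that $K+\supp(u)\subset K'$, and form $f$ whose Fourier transform is $\sum_{\eta}\varphi(\eta)\,\tau_{\chi_\eta}(u)$ where $\chi_\eta\in\widehat G$ is the (unique, by injectivity of $\pi_{\widehat G}|_{\cL_0}$) preimage with $\hat s(\chi_\eta)=\eta$; then $\widehat f$ is supported in $K'$ so $f\in\CS_{K'}$, and $\sum_\lambda\abs{f(\lambda)}^2$ unfolds by Poisson summation into a quadratic form in $\varphi$ governed by $\widecheck{\ifu W}$ on ${}_{K-K}\Lambda$, which is exactly (up to the bump factors) $\norm[L^2(\widehat H)]{\ifu W f_\varphi}^2$. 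The stable-sampling lower bound for $f$ then yields the lower bound in \eqref{eq:si}; the upper bound in \eqref{eq:si} is free from Lemma~\ref{lem:iub} applied to the uniformly discrete set ${}_K\Lambda$. Here I would invoke the density formula with continuous averaging functions (Proposition~\ref{denssm}) and Proposition~\ref{prop:FBsmooth}, exactly as the paper advertises, to control the ``error'' Fourier--Bohr-type terms and to pass to the limit as the bump concentrates; $\sigma$-compactness of $G$ is needed precisely so that $\widehat G$ is metrizable and such Dirac sequences exist.

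\textbf{Part (ii).} Conversely, assume ${}_K\Lambda$ is stable interpolating for $\PW_W$. Given $f\in\CS_K$, I would run the same identity in reverse: $\sum_{\lambda\in\Lambda_W}\abs{f(\lambda)}^2 = \omega_{\ifu W}(g)$ with $g=\abs f^2$, and since $\cL$ projects injectively to $G$ this equals $\omega_{\ifu W}(g)$ exactly (not just $\le$); apply Poisson summation to rewrite it over $\cL_0$ as a quadratic form in the ``samples'' $\widecheck f(\chi_\eta)$ over $\eta\in{}_{K-K}\Lambda$, weighted by $\widecheck{\ifu W}$. The interpolation inequalities for ${}_K\Lambda$ — in the Riesz--Fisher form of Remark~\ref{rem:exponential} — then compare this quadratic form both above and below to $\norm[\ell^2]{(\widecheck f(\chi_\eta))_\eta}^2$, and one more unfolding identifies that $\ell^2$ norm with $\norm[L^2(\widehat H)]{\ifu W\,(\text{transform of }\widecheck f|_{{}_K\Lambda})}^2$, hence ultimately with a constant times $\norm{\widehat f}_{L^2(\widehat G)}^2=\norm f_{L^2(G)}^2$ since $\widehat f$ is supported in $K$. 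Metrizability of $H$ plays the role $\sigma$-compactness of $G$ played before, giving Dirac sequences in $\widehat H$. The passage from $W$ to $\cl W$ reflects that $\widecheck{\ifu W}=\widecheck{\ifu{\cl W}}$ only up to a null set, and one uses $\Lambda_{\cl W}\supseteq\supp(\omega_{\ifu W})$ so the extra points only help the lower bound; the upper bound comes from Lemma~\ref{lem:ubs}.

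\textbf{Main obstacle.} The hard part is not the algebra of Poisson summation but the \emph{limiting argument}: the exponential-sum comb $\omega_{\widecheck{\ifu W}}$ is \emph{not} a finite or absolutely summable object — $\widecheck{\ifu W}$ is merely continuous and in general not integrable against $\omega_{\ifu{K-K}}$ — so one cannot simply ``expand and rearrange''. One must regularize by inserting a concentrating positive-definite bump $g_{K}$ (as in Remark~\ref{rem:int}) or Dirac sequence $(v_n)$, carry the inequalities through for the regularized objects where everything converges absolutely, and then pass to the limit using Proposition~\ref{denssm}/Proposition~\ref{prop:FBsmooth} together with a dominated-convergence argument exactly as in the proofs of Proposition~\ref{lem:ilbms} and Proposition~\ref{lem:slb}. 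Keeping the constants uniform through this limit — and making sure the neighborhood $K'$ (resp. the closure $\cl W$) absorbs the bump support without degrading the constant — is where the real care lies, and is presumably why the theorem needs the \emph{completeness} of the cut-and-project scheme (so that the dual scheme also satisfies the density/injectivity hypotheses of Fact~\ref{thm:df}, Proposition~\ref{denssm} in both directions).
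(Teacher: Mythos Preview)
Your outline for Part~(i) is essentially the paper's argument: build $f_n\in\CS_{K'}$ by placing copies of a Dirac-sequence bump~$v_n$ in~$\widehat G$ at the points $\chi\in\pi_{\widehat G}(\cL_0)\cap K$ with weights $\varphi(\hat s(\chi))$, so that $\norm[L^2]{\widehat{f_n}}=\norm[\ell^2]{\varphi}$ once the bumps have disjoint supports. One correction: the paper does not literally ``unfold by Poisson summation'' here. The key step is the star-map identity $f(\lambda)=f_\varphi(s(\lambda))$ for $\lambda\in\pi_G(\cL)$ (coming from $(\chi,\hat s(\chi))(\lambda,s(\lambda))=1$), which turns $\sum_{\lambda\in\Lambda_W}\abs{f_n(\lambda)}^2$ directly into the weighted comb $\omega_{\abs{f_\varphi}^2\ifu W}$ evaluated on the averaging kernel $\abs{\widecheck{v_n}}^2/\norm[L^2]{\widecheck{v_n}}^2$; then Proposition~\ref{denssm} gives the limit $\dens(\cL)\int_W\abs{f_\varphi}^2$. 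Your sketch would arrive there once written out.

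Part~(ii), however, has a real gap. After Poisson summation, $\omega_{\ifu W}(\abs f^2)$ (or its regularization) becomes $\dens(\cL)\sum_{(\chi,\eta)\in\cL_0}\widecheck{h_W}(\eta)\,(\widehat f*\widetilde{\widehat f})(\chi)$; the factor $(\widehat f*\widetilde{\widehat f})(\chi)$ is a \emph{continuous} convolution, not a discrete quadratic form in samples $\widehat f(\chi_\eta)$, and there is no ``one more unfolding'' that makes it one. Worse, the natural sample vector $\eta\mapsto\widehat f(\hat t(\eta))$ on ${}_K\Lambda$ is not finitely supported --- indeed not even in $\ell^2({}_K\Lambda)$, since $\pi_{\widehat G}(\cL_0)$ is dense in~$\widehat G$ --- so the interpolation inequality~\eqref{eq:si} cannot be applied to it. The paper's route is genuinely different: it truncates to $\varphi_n=\theta_{\widehat H}(B_n)^{-1/2}\ifu{B_n}\cdot(\widehat f\circ\hat t)$ along a \emph{van Hove} sequence $(B_n)$ in~$\widehat H$ (this, not a Dirac sequence, is where metrizability of~$H$ enters), applies the interpolation lower bound to each finitely supported~$\varphi_n$, and then identifies the two sides in the limit via \emph{diffraction theory}: one has $\norm[L^2]{\ifu W f_{\varphi_n}}^2=\widehat{\gamma_n}(W)$ where $\gamma_n$ is the normalized autocorrelation of the finite measure ${}_{\widehat f}\omega\rstr{B_n}$, and vague continuity of the Fourier transform on positive-definite measures gives $\limsup_n\widehat{\gamma_n}(W)\le\widehat\gamma(\cl W)=\dens(\cL_0)^2\sum_{\lambda\in\Lambda_{\cl W}}\abs{f(\lambda)}^2$, while $\norm[\ell^2]{\varphi_n}^2\to\dens(\cL_0)\norm[L^2]{f}^2$ by the density formula. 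The closure $\cl W$ thus appears from the vague-limit upper bound, not from any null-set argument about $\widecheck{\ifu W}$. This autocorrelation/diffraction step is the missing idea in your plan.
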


\begin{remark}
In applications, it might be convenient to replace~$(i)$ by the following equivalent statement. It is obtained from~$(i)$ by exchanging the roles of~$G$ and~$H$ and using the dual cut-and-project scheme.
{\it
  \begin{enumerate}
    \item[(i')] Assume that~$H$ is $\sigma$-compact. Take any relatively compact and measurable neighborhood~$W'$ of~$W$.   If $_{K}\Lambda\subset \widehat H$ is stable sampling for $\CS_{W'}\subset L^2(\widehat H)$, then $\Lambda_W\subset G$ is stable interpolating for $\PW_{K}\subset L^2(G)$.
\end{enumerate}
}
\end{remark}

The assumptions in the duality theorem may slightly be relaxed, as is seen by inspecting the proof of the duality theorem.

\begin{remark}[Assumptions in the Duality Theorem]\label{rem:d1}
\leavevmode

\begin{itemize}

\item[(a)] The proof does not use all four projection assumptions in Definition~\ref{def:cps}. The proof of (i) only uses that~$\cL$ projects injectively to~$G$ and densely to~$H$. The proof of (i') only uses that~$\cL$ projects injectively to~$H$ and densely to~$G$. 
The proof of (ii) does not use that~$\cL$ projects densely to~$H$.

\item[(b)] If~$K$ is open, we may take $K'=K$ in (i). For more general~$K$, assume that~$\partial K$ is nowhere dense. This comprises the cases~$K$ compact and~$K$ topologically regular, i.\,e.,   $\cl{\inn{K}}=\cl{K}$. Then, without any projection assumption, nowhere denseness implies that the set of $g\in \widehat G$ such that $(\partial K+g)\cap \pi_{\widehat G}(\cL_0)=\varnothing$ is dense~$G_\delta$ in~$\widehat G$, see the proof of \cite[Prop.~8.3]{KR18} or \cite[Prop.~2.12]{HR15}. Thus by an arbitrarily small shift we may ensure $\partial K\cap \pi_{\widehat G}(\cL_0)=\varnothing$. In that case, we may take $K'=K$ in (i). Analogous reasoning applies to (i').

\item[(c)] If~$W$ is closed, we have $\cl W=W$ in (ii). For more general~$W$, assume that~$\partial W$ is nowhere dense, which comprises the case~$W$ topologically regular. Then, without any projection assumption, by an arbitrarily small shift we may ensure $\partial W\cap \pi_{H}(\cL)=\varnothing$. In that case $\Lambda_{\cl W}=\Lambda_W$ in (ii).

\item[(d)]  Consider the case $G=\mathbb R$ or $G=\mathbb Z$ and assume that $K\subset \widehat G$ is a left-semiclosed multiband set, i.\,e., a union of finitely many pairwise disjoint intervals $[a,b)$. In that situation one may take $K'=K$. This is possible as the structure of~$K$ allows to approach any left interval endpoint in~$K$ from the right. Indeed, in the proof of part (i) of the Duality Theorem~\ref{thm:main}, one may take $\eta_n\in \widehat G \cong\mathbb R$ to be $\eta_n=1/n$  in Eqn.~\eqref{eq3} to see that $K'=K$ is a possible choice.
This has been used in \cite[Lem.~2.1]{KL11} and generalized in \cite{GL16} to the multi-dimensional case.
\end{itemize}
\end{remark}

Note that our Duality Theorem relates~$\PW_{K/W}$ and~$\CS_{W/K}$ in a particular way. This is in contrast to the duality theorems in \cite{KL11, GL16, AACB16}, which all relate~$\PW_{K/W}$ and~$\PW_{W/K}$. In fact those duality theorems are special cases of Theorem~\ref{thm:main}, as can be seen using  Proposition \ref{prop:denseinL2} and the previous remark.

\begin{prop}\label{prop:denseinL2}
Let~$G$ be an LCA group. Let $\Lambda\subset G$ be uniformly discrete and let $K\subset \widehat G$ be relatively compact, measurable  and Riemann measurable. If~$\Lambda$ is stable sampling for~$\CS_K$, then~$\Lambda$ is stable sampling for~$\PW_K$.
\end{prop}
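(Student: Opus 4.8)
The plan is to show that the stable sampling inequalities~\eqref{eq:stablesampling}, valid on the dense subspace $\CS_K\subset \PW_K$ (density here using that $K$ is Riemann measurable, by Proposition~\ref{prop:PWdense}), extend to all of $\PW_K$ by a continuity argument. The upper bound is the easy half: the map $f\mapsto \norm{f|_\Lambda}_{\ell^2(\Lambda)}$ on $\PW_K$ is bounded by $\sqrt{B}\,\norm{f}_{L^2(G)}$ on the dense set $\CS_K$, and since $f\mapsto f(\lambda)$ is continuous on $\PW_K$ for each fixed $\lambda$ (point evaluation is bounded on a Paley--Wiener space, as recorded in Remark~\ref{rem:exponential} via $f(\lambda)=\langle\widehat f,e_{-\lambda}|_K\rangle$), a Fatou-type argument over finite subsets of $\Lambda$ gives $\norm{f|_\Lambda}_{\ell^2(\Lambda)}^2\le B\norm{f}_{L^2(G)}^2$ for all $f\in\PW_K$. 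Alternatively, one may simply invoke the general upper bound for uniformly discrete sets (the analogue of Lemma~\ref{lem:iub} on the sampling side), so that only the lower bound needs genuine work.

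For the lower bound, fix $f\in\PW_K$ and pick a sequence $f_n\in\CS_K$ with $f_n\to f$ in $L^2(G)$; then $\widehat{f_n}\to\widehat f$ in $L^2(K)$. The key point is to upgrade this to control of the point values: for each $\lambda$, $|f_n(\lambda)-f(\lambda)|=|\langle \widehat{f_n}-\widehat f, e_{-\lambda}|_K\rangle|\le \norm{\widehat{f_n}-\widehat f}_{L^2(K)}\norm{e_{-\lambda}|_K}_{L^2(K)}=\theta_{\widehat G}(K)^{1/2}\norm{f_n-f}_{L^2(G)}$, so $f_n\to f$ uniformly on $G$, in particular pointwise on $\Lambda$. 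Now write, for any finite $F\subset\Lambda$,
\begin{displaymath}
\Bigl(\sum_{\lambda\in F}|f(\lambda)|^2\Bigr)^{1/2}=\lim_{n\to\infty}\Bigl(\sum_{\lambda\in F}|f_n(\lambda)|^2\Bigr)^{1/2}\le \liminf_{n\to\infty}\norm{f_n|_\Lambda}_{\ell^2(\Lambda)}.
\end{displaymath}
By the already-available upper bound applied to $f_n-f_m\in\CS_K$, the sequence $(f_n|_\Lambda)_n$ is Cauchy in $\ell^2(\Lambda)$, hence convergent, and its limit agrees with $f|_\Lambda$ by the pointwise convergence just established; therefore $\norm{f_n|_\Lambda}_{\ell^2(\Lambda)}\to\norm{f|_\Lambda}_{\ell^2(\Lambda)}$. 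Combining with the stable sampling lower bound on $\CS_K$, namely $\norm{f_n|_\Lambda}_{\ell^2(\Lambda)}^2\ge A\norm{f_n}_{L^2(G)}^2$, and passing to the limit using $\norm{f_n}_{L^2(G)}\to\norm{f}_{L^2(G)}$, gives $\norm{f|_\Lambda}_{\ell^2(\Lambda)}^2\ge A\norm{f}_{L^2(G)}^2$, as required.

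The main obstacle is the one already addressed above: one needs that the $\ell^2(\Lambda)$-convergence of the restricted sequence is not merely Cauchy but actually converges to $f|_\Lambda$, which requires knowing a priori that the restriction operator $\PW_K\to\ell^2(\Lambda)$, $f\mapsto f|_\Lambda$, is well-defined and bounded — i.e. the sampling upper bound for uniformly discrete sets. This is exactly the content of the ``easy half'' and is where the hypothesis that $\Lambda$ is uniformly discrete enters; the Riemann measurability of $K$ enters only through the density of $\CS_K$ in $\PW_K$. Once both ingredients are in place, the extension is a routine closed-graph / limiting argument, so no further delicacy arises.
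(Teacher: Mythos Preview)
Your proof is correct and follows essentially the same approach as the paper: invoke the general sampling upper bound for uniformly discrete $\Lambda$ on $\PW_K$, use Riemann measurability of $K$ to get density of $\CS_K$ in $\PW_K$, and extend the lower bound by approximation. The paper's version is slightly more direct---it applies the $\PW_K$ upper bound immediately to $f-f_\varepsilon$ and uses a single $\varepsilon$-triangle inequality rather than your Cauchy-plus-pointwise-identification detour---but the idea is identical.
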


\begin{proof}
Note that the~$\PW_K$ stable sampling upper bound in Eqn.~\eqref{eq:stablesampling} holds for general uniformly discrete sets by \cite[Lem.~2]{GKS08}, see also Lemma~\ref{lem:ubs} for weak model sets. It thus suffices to consider the lower bound.
Assume that~$\Lambda$ is stable sampling for~$\CS_K$ and that~$K$ is Riemann measurable. Fix any $f\in \PW_K$. Take $\varepsilon>0$ and choose $f_\varepsilon\in\CS_K$ such that $\norm{f-f_\varepsilon}_{L^2(G)}\le\varepsilon$, which is possible due to Proposition~\ref{prop:PWdense}. By the triangle inequality we then have
\begin{displaymath}
\abs*{ \norm[L^2(G)]{f} - \norm[L^2(G)]{f_\varepsilon}}
\le \norm[L^2(G)]{f-f_\varepsilon} \le \varepsilon\,.
\end{displaymath}
Moreover, using the upper bound, we can estimate
\begin{displaymath}
\abs*{ \norm[\ell^2(\Lambda)]{f|_\Lambda} - \norm[\ell^2(\Lambda)]{f_\varepsilon|_\Lambda}  }
\le \norm[\ell^2(\Lambda)]{(f-f_\varepsilon)|_\Lambda} \le
 \sqrt{B}\cdot \norm[L^2(G)]{f-f_\varepsilon} \le \sqrt{B}\cdot \varepsilon
\end{displaymath}
This leads to the following estimate for the lower bound:
\begin{displaymath}
\sqrt{A}\cdot \norm[L^2(G)]{f}\le \sqrt{A}\cdot(\norm[L^2(G)]{f_\varepsilon}+\varepsilon)
\le \norm[\ell^2(\Lambda)]{f_\varepsilon|_\Lambda}+\sqrt{A}\cdot\varepsilon
\le \norm[\ell^2(\Lambda)]{f|_\Lambda}+(\sqrt{A}+\sqrt{B})\cdot\varepsilon \,.
\end{displaymath}
As $\varepsilon>0$ was arbitrary, the claim follows.
\end{proof}

\subsection{Proof of the Duality Theorem}

We now give a proof of the Duality Theorem. Since the upper bounds for stable sampling and stable interpolation hold in general, see also Section~\ref{sec:ub} and  Section~\ref{lem:iub} for model sets, we only discuss the lower bounds.

\begin{figure}
  \begin{equation*}
    \begin{CD}
      \CS_{K'} @<<< L^2(K') \\
      @ VVV @AA A \\
      C(H) @< << \ell^2(_K\Lambda)
    \end{CD}
    \qquad\qquad
    \begin{CD}
      f_n @<<< \widehat{f_n} \\
      @VVV @AAA \\
      f_\varphi && \varphi
    \end{CD}
  \end{equation*}
  \caption{Tracing function spaces in the proof of the Duality theorem part (i)}\label{fig1}
\end{figure}

\begin{proof}[Proof of (i)]
  We assume that there is a positive constant~$A_S$ such that
  \begin{equation}\label{eq:stable}
    A_S \cdot \norm{f}_{L^2(G)}^2
    \le \sum_{\lambda\in\Lambda_W}\abs{f(\lambda)}^2
  \end{equation}
for all $f\in\CS_{K'}$, where $K'\supset K$ is any given fixed relatively compact and measurable neighborhood of~$K$.
  We will show that for arbitrary $\varphi\in\ell^2(_K\Lambda)$ with finite support we have
  \begin{equation}\label{eq:interpolation}
    A_S\cdot \norm\varphi_{\ell^2(_K \Lambda)}^2
    \le\dens(\cL)\cdot\int_{W}\abs[\Big]{\sum_{\eta\in {_{K}}{\Lambda}}
      \varphi(\eta)\, \overline{\eta(h)}}^2\dd\theta_H(h) \, .
  \end{equation}
The strategy of the proof consists in associating to $\varphi\in \ell^2(_K\Lambda)$ suitable $f_n\in \CS_{K'}$, which will be defined via their Fourier transform, compare Fig.~\ref{fig1}.

\smallskip

Take any $\varphi\in\ell^2(_K \Lambda)$ of finite support. 
We consider certain $L^2(\widehat G)$ approximations to $\varphi\circ \hat s$, where~$\hat s$ denotes the star map ${\hat s}\from\pi_{\hat G}(\cL_0)\to\pi_{\hat H}(\cL_0)$.
The latter is indeed well-defined as~$\cL_0$ projects injectively to~$\widehat G$, which follows by Pontryagin duality as~$\cL$ projects densely to~$H$.
 Take any sequence $(\eta_n)_n$ of characters in~$\widehat G$ converging to the identity character, see Remark~\ref{rem:d1}~(d). Take a Dirac sequence $(v_n)_{n}$ in~$\widehat G$ such that~$v_n$ is positive definite. This is possible as~$\widehat G$ is metrizable by assumption.  Define
$\widehat f_n\in L^2(\widehat G)\cap C_c(\widehat G)$ to be the finite sum
  \begin{equation}\label{eq3}
    \widehat{f_n}(\gamma)= \frac{1}{\norm{v_n}_{L^2(\widehat G)}} \sum_{\chi\in\pi_{\widehat G}(\cL_0)\cap K} v_n(\gamma\cdot \overline{\chi}\cdot \eta_n)\cdot\varphi(\widehat s(\chi))\,.
  \end{equation}
The latter sum is indeed finite as~$\varphi$ has finite support and as~$K$ is relatively compact.
Note that the support of~$v_n$ is contained in any given unit neighborhood if~$n$ is sufficiently large. Thus $f_n:=\widecheck{\widehat {f_n}}\in L^1(G)$ satisfies $f_n\in \CS_{K'}$ for sufficiently large~$n$. Note that if $\pi_{\widehat G}(\cL_0)\cap \partial K=\varnothing$, we may choose $K'=K$.

\smallskip

We argue that the functions~$f_n$ asymptotically provide the claimed duality.
Note first that for sufficiently large~$n$ we have
\begin{equation}\label{eq1}
  A_S \cdot \norm\varphi_{\ell^2(\pi_{\hat H}(\cL_0))}^2
  =A_S\cdot \norm{\widehat f_n}_{L^2(\widehat G)}^2
  =A_S\cdot \norm{f_n}_{L^2(G)}^2
  \le\sum_{\lambda\in\Lambda_W}\abs{f_n(\lambda)}^2
\end{equation}
by Eqn.~\eqref{eq:stable}.
Let us compute that the rhs is asymptotically bounded by the rhs of Eqn.~\eqref{eq:interpolation}.
We have the explicit expression
\begin{displaymath}
f_n=\frac{\overline{\eta_n}\cdot\widecheck{v_n}}{\norm{\widecheck{v_n}}_{L^2(G)}} \cdot f, \qquad     
f(x)=\sum_{\chi\in\pi_{\hat G}(\cL_0)}\varphi(\widehat s(\chi))\, \chi(x) \,,
\end{displaymath}
where $f\in C(G)$. Now consider  $f_\varphi\in C(H)$ from Definition~\ref{def:si}, which satisfies
  \begin{align*}
    f_\varphi(y)&
   =
\sum_{\eta\in\pi_{\hat H}(\cL_0)}
      \varphi(\eta)\,\overline{y(\eta)}
 =\sum_{\chi\in\pi_{\hat G}(\cL_0)}
      \varphi(\widehat s(\chi))\, \overline{\widehat s(\chi)(y)} \,.
  \end{align*}
Consider the star map $s:\pi_G(\cL)\to \pi_H(\cL)$, which is well-defined as~$\cL$ projects injectively to~$G$.
Note that~$f$  coincides on $\pi_G(\cL)$ with $f_\varphi\circ s$.
Indeed, for $\lambda\in\pi_G(\cL)$ we have
  \begin{align*}
    f(\lambda)&
    =\sum_{\chi\in\pi_{\hat G}(\cL_0)}
      \varphi(\widehat s(\chi))\, \chi(\lambda)=\sum_{\chi\in\pi_{\hat G}(\cL_0)}
      \varphi(\widehat s(\chi))\, \overline{\widehat s(\chi)(s(\lambda))}= f_\varphi(s(\lambda))
    \,.
  \end{align*}
Here we used that for any $\chi\in\pi_{\hat G}(\cL_0)$
  and $\lambda\in\pi_G(\cL)$,
  we have $\ell_0:=(\chi,\widehat s(\chi))\in\cL_0$
  and $\ell:=(\lambda,s(\lambda))\in\cL$,
  and further
  \begin{equation*}
    1=\ell_0(\ell)=(\chi,\widehat s(\chi))(\lambda,s(\lambda))
      =\chi(\lambda)\widehat s(\chi)(s(\lambda))
    \implies
   \chi(\lambda)=\overline{ \widehat s(\chi)(s(\lambda))}
    \,.
  \end{equation*}
Now we can compute
  \begin{equation}\label{eq2}
    \begin{split}
   \sum_{\lambda\in\Lambda_W}\abs{f_n(\lambda)}^2
      &  = \sum_{\lambda\in\Lambda_W}\abs{f(\lambda)}^2 \frac{\abs{\widecheck{v_n}(\lambda)}^2}{\norm[L^2(G)]{\widecheck{v_n}}^2}
=\frac{1}{\norm[L^2(G)]{\widecheck{v_n}}^2}\sum_{\lambda\in \pi_G(\cL)} \abs{f_\varphi(s(\lambda))}^2 \ifu W(s(\lambda))\abs{\widecheck{v_n}(\lambda)}^2\\
&
=\frac{1}{\norm[L^2(G)]{\widecheck{v_n}}^2}\omega_{\abs{f_\varphi}^2 \ifu W}(\abs{\widecheck{v_n}}^2)  \,.
    \end{split}
  \end{equation}
%
By outer regularity of the Haar measure~$\theta_H$ on~$H$ and by  \cite[Lem.~3.6]{HR15} there are windows $W_\varepsilon \supset W$ such that $\theta_H(\partial W_\varepsilon)=0$ and $\theta_H(W_\varepsilon)-\theta_H(W)\le \varepsilon$ for any $\varepsilon>0$.
We can thus apply the density formula Proposition~\ref{denssm} for unbounded averaging functions to conclude from Eqn.~\eqref{eq2} that
\begin{displaymath}
  \begin{split}
    \limsup_{n\to\infty}  \sum_{\lambda\in\Lambda_W}\abs{f_n(\lambda)}^2
   &=\limsup_{n\to\infty}  \frac{1}{\norm[L^2(G)]{\widecheck{v_n}}^2}\omega_{\abs{f_\varphi}^2 \ifu {W_\varepsilon}}(\abs{\widecheck{v_n}}^2)
    =\dens(\cL)\cdot\int_{W_\varepsilon} \abs{f_\varphi(y)}^2 \dd\theta_H(y)\\
   &\to\dens(\cL)\cdot\int_{W} \abs{f_\varphi(y)}^2 \dd\theta_H(y)
    \qquad (\varepsilon \downarrow 0) \,.
  \end{split}
\end{displaymath}
In order to apply the density formula Proposition~\ref{denssm}, we worked in the bare cut-and-project scheme $(G,H,\cL)$, where we used that~$G$ is $\sigma$-compact and that~$\cL$ projects densely to~$H$.
This combined with Eqn.~\eqref{eq1} shows stable interpolation (i).

%
%
\end{proof}

\begin{proof}[Proof of (ii)]
The strategy of the proof is similar to that of (i). We will associate to any $f\in \CS_K$ suitable $\varphi_n\in \ell^2(_K \Lambda)$ of finite support, compare Fig.~\ref{fig2}.  Our computations will then use diffraction theory of model sets. In particular, we will use commutativity of the modified Wiener diagram, as explained in \cite[Sec.~2, Rem.~5.6]{RS15}.

\smallskip

\begin{figure}
  \begin{equation*}
    \begin{CD}
      \CS_K @> >> L^2(K) \\
      @VVV @VVV \\
      C(H) @< << \ell^2(_K\Lambda)
    \end{CD}
    \qquad\qquad
    \begin{CD}
      f @>>> \widehat f \\
      && @VVV \\
      f_{\varphi_n} @<<< \varphi_n
    \end{CD}
  \end{equation*}
  \caption{Tracing function spaces in proof of the Duality theorem part (ii)}
\label{fig2}
\end{figure}

Consider an arbitrary $f\in \CS_K$. As  $(G,H,\cL)$ is a bare cut-and-project scheme and $\widehat f\in C_c(\widehat G)$, we have that
\begin{displaymath}
_{\abs{\widehat f}^2}\omega= \sum_{(\chi,\eta)\in\cL_0} \abs{\widehat f}^2(\chi) \delta_\eta
\end{displaymath}
is a weighted model comb  in~$\widehat H$. Fix any van Hove sequence $(B_n)_{n\in\mathbb N}$ in~$\widehat H$. Such a sequence indeed exists as~$H$ is assumed to be metrizable. Define $\varphi_n\from\pi_{\widehat H}(\cL_0)\to\mathbb C$ by
\begin{displaymath}
 \varphi_n=\frac{1}{\theta_{\widehat H}(B_n)^{1/2}}
   \ifu{B_n}\cdot(\widehat{f}\circ \widehat t)\,.
\end{displaymath}
Then~$\varphi_n$ is finitely supported as~$\widehat f$ has compact support and~$B_n$ is compact. Here we use that the star map $\hat t\from\pi_{\hat H}(\cL_0)\to\pi_{\hat G}(\cL_0)$ is well-defined since~$\cL_0$ projects injectively to~$\hat H$, which holds by Pontryagin duality as lattice~$\cL$ projects densely to~$G$.
Now the density formula Fact~\ref{thm:df} for~$_{\abs{\widehat f}^2}\omega$ reads
\begin{equation}\label{eq:varphi-density}
 \dens(\cL_0)\cdot\norm[L^2(G)]{f}^2=
 \dens(\cL_0)\cdot\norm[L^2(\widehat G)]{\widehat f}^2 =
\lim_{n\to\infty}   \norm[\ell^2(\pi_{\widehat H}(\cL_0))]{\varphi_n}^2 \,.
\end{equation}
Here we used that~$\widehat H$ is $\sigma$-compact and that~$\cL_0$ projects densely to~$\widehat G$, which follows from injectivity of $\pi_H|_{\cL}$ by Pontryagin duality. Now the stable interpolation lower bound yields an inequality, and we show that the rhs of this inequality is asymptotically bounded from above by the expression needed for stable sampling.

\smallskip

Consider the weighted model comb  $_{\widehat f}\omega= \sum_{(\chi,\eta)\in\cL_0} \widehat f(\chi) \delta_\eta$  in the bare cut-and-project scheme $(\widehat G,\widehat H, \cL_0)$. By standard reasoning, see e.\,g.~\cite[Prop.~5.1]{RS15}, its autocorrelation  is $\gamma=\dens(\cL_0)\cdot _{\widehat f * \widetilde{\widehat f}} \omega$, where we used that~$\cL_0$ projects densely to~$\widehat G$, which follows from injectivity of $\pi_H|_\cL$ by Pontryagin duality.  Moreover by \cite[Thm.~4.10]{RS15}, its  diffraction measure is  $\widehat\gamma=\dens(\cL_0)^2\cdot {_{\abs{f}^2}}\omega$, where $_{\abs{f}^2}\omega= \sum_{(x,y)\in\cL} \abs{f}^2(x) \delta_y$.  Let us consider the normalized finite sample autocorrelations
\begin{displaymath}
  \gamma_n=\frac{1}{\theta_{\widehat H}(B_n)} {_{\widehat f}\omega}|_{B_n} * \widetilde{{_{\reallywidehat f}\omega|_{B_n}}} \,,
\end{displaymath}
%
where $*$ denotes convolution of measures, and where $\widetilde \mu(g)=\mu(\widetilde g)$ denotes measure reflection.
As finite point measures satisfy the convolution theorem \cite[Thm.~3.1]{ARMA1}, we have
\begin{displaymath}
\widehat {\gamma_n}=\frac{1}{\theta_{\widehat H}(B_n)} \cdot \reallywidehat{{_{\widehat f}\omega}|_{B_n}}\,  \cdot \overline{\reallywidehat{{_{\widehat f}\omega}|_{B_n}}}
= \abs{f_{\varphi_n}}^2\theta_H\,,
\end{displaymath}
where multiplication of absolutely continuous measures $f\theta_H$ and $g\theta_H$ is understood as $(f\theta_H)\cdot(g\theta_H)=(fg)\theta_H$. Here  $f_{\varphi_n}: H\to\mathbb C$ is the discrete Fourier transform associated to~$\varphi_n$, i.\,e., the finite trigonometric sum
\begin{displaymath}
 f_{\varphi_n}=\sum_{\eta\in B_n}\varphi_n(\eta) \, \overline{\eta}
=\frac{1}{\theta_{\widehat H}(B_n)^{1/2}}\sum_{(\chi,\eta)\in \cL_0\cap(\widehat G\times B_n)} \widehat{f}(\chi)\, \overline{\eta} \,.
\end{displaymath}
%
%
%
%
%
We have argued that for every $h\in C_c(H)$ we have
\begin{displaymath}
\widehat{\gamma_n}(h)= \int_H \abs{f_{\varphi_n}}^2(y)h(y) \dd\theta_H(y)  \, .
\end{displaymath}
%
%
%
%
%
%
Note that the Fourier transform
is continuous on the space of positive definite complex Radon measures, see  \cite[Lem.~1.26]{MoSt} for the general case and  \cite[Thm.~4.16]{BF} for positive measures.
Since $\lim_{n\to\infty} \gamma_n=\gamma$ vaguely, we thus also have $\widehat \gamma=\lim_{n\to\infty} \widehat{\gamma_n}$ vaguely. Taking $h_\varepsilon\in C_c(H)$ such that $\ifu {\cl W}\le h_\varepsilon$ and $h_\varepsilon\to \ifu {\cl W}$ pointwise as $\varepsilon\downarrow 0$, we can argue
\begin{equation}\label{eq:fvarphin-density}
\begin{split}
\limsup_{n\to\infty} \widehat{\gamma_n}(W) \le \inf_{\varepsilon>0}\limsup_{n\to\infty} \widehat{\gamma_n}(h_\varepsilon)=\inf_{\varepsilon>0} \widehat{\gamma}(h_\varepsilon)=\widehat{\gamma}(\cl W)\,,
\end{split}
\end{equation}
where we used dominated convergence for the last equality.
We thus have shown
\begin{equation}\label{eq:sampint}
\limsup_{n\to\infty} \norm[L^2(H)]{\ifu W f_{\varphi_n}}^2= \limsup_{n\to\infty} \widehat{\gamma_n}(W) \le \widehat{\gamma}(\cl{W})=
\dens(\cL_0)^2\cdot\sum_{\lambda \in \Lambda_{\cl {W}}} \abs{f(\lambda)}^2\ ,
\end{equation}
where in the last equation we again used that $\pi_G|_\cL$ is injective.
We combine this with Eqn.~\eqref{eq:varphi-density} and the definition of stable interpolating:
\begin{align*}
  A_I\cdot\dens(\cL_0)\cdot\norm f_{L^2(G)}^2
  =A_I\cdot\lim_{n\to\infty}\norm{\varphi_n}_{\ell^2(\pi_{\widehat H}(\cL_0))}^2&
  \le\limsup_{n\to\infty}\norm{\ifu Wf_{\varphi_n}}_{L^2(H)}^2\\&
  \le\dens(\cL_0)^2\sum_{\lambda\in\Lambda_{\cl W}}\abs{f(\lambda)}^2\,,
\end{align*}
which finishes the proof that stable sampling is implied by stable interpolation.
\end{proof}

Reinspecting the proof of part (ii) in the duality theorem, the following observations arise.

\begin{remark}\leavevmode
\begin{itemize}
\item[(i)]  The argument in Eqn.~\eqref{eq:fvarphin-density} can analogously be applied for~$W$ an open set. As a result, we have equality in Eqn.~\eqref{eq:sampint} if $\widehat\gamma(\partial W)=0$, a condition which is satisfied if $\partial W\cap \pi_H(\cL)=0$. The latter property holds generically if~$\partial W$ is nowhere dense,  compare Remark \ref{rem:d1} (c).
\item[(ii)] By the Fourier-Bohr coefficient formula \cite[Thm.~5]{L09}, we have for every $y\in H$ that
\begin{displaymath}
\lim_{n\to\infty}  \frac{1}{\theta_{\widehat H}(B_n)} \cdot \abs{f_{\varphi_n}}^2(y)=\widehat\gamma(\{y\})\, ,
\end{displaymath}
which implies $\widehat\gamma(W)=\lim_{n\to\infty}  \frac{1}{\theta_{\widehat H}(B_n)} \cdot \sum_{y\in W\cap\pi_H(\cL)}  \abs{f_{\varphi_n}}^2(y)$. This differs from  $\widehat \gamma(\cl W)\ge \limsup_{n\to\infty} \widehat{\gamma_n}(W)$ in  Eqn.~\eqref{eq:sampint}.
\end{itemize}

\end{remark}

\subsection{Stable sampling and stable interpolation for simple model sets}

Theorem~\ref{thm:main} can now be used in conjunction with Fact~\ref{fact:1d} to give a proof of Theorem~\ref{cor:SIn1}.

\begin{proof}[Proof of Theorem~\ref{cor:SIn1}]
Note that by Eqn.~\eqref{eq:denswms} the weak model set $_K \Lambda \subset \widehat H$ satisfies the density formula
\begin{equation}\label{form:gdf}
\dens(\cL_0) \cdot \theta_{\widehat G}(\inn K)\le \mathcal D_\cA^-(_K\Lambda)\le
 \mathcal D_\cA^+(_K\Lambda) \le \dens(\cL_0) \cdot \theta_{\widehat G}(\cl{K})\, ,
\end{equation}
where~$\cA$ is any van Hove sequence in~$\widehat H$.

\noindent (i)
By assumption we have $\theta_{\widehat G}(\cl K)<\mathcal D(\Lambda_I)=\dens(\cL)\cdot\abs I$, where the latter equality is the density formula for the regular model set~$\Lambda_I$. Choose any Riemann integrable neighborhood~$K'$ of~$\cl K$ such that $\theta_{\widehat G}(K')<\mathcal D(\Lambda_I)$. Such a neighborhood exists by \cite[Lem.~3.6]{HR15}.  Then by the density formula Eqn.~\eqref{form:gdf} we have $\abs I>\mathcal D^+_\cA(_{K'}\Lambda)$.  Choose compact $I'\subset I$ such that $\abs{I'}>\mathcal D^+_\cA(_{K'}\Lambda)$.  Then $_{K'}\Lambda \subset \widehat H \cong\mathbb R$ is stable interpolating for~$\PW_{I'}$ by Fact~\ref{fact:1d} (ii). We can now invoke the Duality Theorem~\ref{thm:main} (ii) to infer that $\Lambda_{\cl{I'}}=\Lambda_{I'}$ is stable sampling for~$\CS_{K'}$. Since~$K'$ is Riemann integrable,~$\Lambda_{I'}$ is stable sampling for~$\PW_{K'}$ by Proposition~\ref{prop:denseinL2}. In particular,~$\Lambda_{I'}$ is stable sampling for~$\PW_{K}$. Since $I'\subset I$, we have $\Lambda_{ I'}\subset \Lambda_{I}$. Hence also~$\Lambda_{I}$ is stable sampling for~$\PW_{K}$. Indeed, in Definition~\ref{def:StSa} the lower inequality continues to hold for any superset of~$\Lambda$, whereas an upper inequality is satisfied for general uniformly discrete sets.

\smallskip

\noindent (ii)
By assumption we have $\theta_{\widehat G}(\inn K)>\mathcal D(\Lambda_I)=\dens(\cL)\cdot\abs I$, where the latter equality is the density formula for the regular model set~$\Lambda_I$.
Now choose an interval $I'\supset I$ such that $\abs{I'}>\abs I$ and such that still $\dens(\cL)\cdot\abs{I'}<\theta_{\widehat G}(\inn K)$. Then by the density formula Eqn.~\eqref{form:gdf} we have $\abs{I'}<\mathcal D^-_\cA(_K\Lambda)$.
Hence $_K\Lambda \subset \widehat H \cong\mathbb R$ is stable sampling for~$\PW_{I'}$ by Fact~\ref{fact:1d} (i).  In particular~$_K\Lambda$ is stable sampling for~$\CS_{I'}$. We can now invoke the Duality Theorem~\ref{thm:main} (i') to infer that~$\Lambda_I$ is stable interpolating for~$\PW_K$.
\end{proof}

\section{Topological properties of the Paley--Wiener space}\label{sec:closed}

\subsection{The test function space~$\CS_K$}

\begin{prop}\label{prop:PWdense}
Let~$G$ be an LCA group and assume that $K\subset \widehat G$ is Riemann measurable.
  Then~$\CS_K$ is dense in $(\PW_K, \norm{\cdot}_{L^2(G)})$.
\end{prop}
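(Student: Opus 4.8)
The goal is to show that $\CS_K = \PW_K \cap L^1(G)$ is dense in $(\PW_K,\norm{\cdot}_{L^2(G)})$ whenever $K$ is Riemann measurable, i.e., $\theta_{\widehat G}(\partial K)=0$. Since the Fourier transform is a norm-preserving isomorphism $\PW_K \xto{\sim} L^2(K)$ with $\widehat{\CS_K}$ contained in $\{\psi\in L^2(K) : \psi \in C_c(\widehat G)\text{ as a function on }\widehat G\}$ (because $f\in L^1(G)$ forces $\widehat f$ continuous, and $\widehat f$ vanishes off the relatively compact set $K$), it suffices to work on the Fourier side: given $f\in\PW_K$, I must approximate $\widehat f\in L^2(K)$ in $L^2(\widehat G)$-norm by functions $\psi\in C_c(\widehat G)$ that still vanish almost everywhere outside $K$.

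First I would reduce to approximating $\widehat f$ by functions supported in a \emph{compact} subset of the \emph{interior} $\inn K$. Fix $\varepsilon>0$. Since $\cl K$ is compact and $\theta_{\widehat G}(\partial K)=0$, we have $\theta_{\widehat G}(K\setminus \inn K)=0$, so $\widehat f = \widehat f\cdot\ifu{\inn K}$ almost everywhere and $\norm{\widehat f}_{L^2(K)} = \norm{\widehat f}_{L^2(\inn K)}$. By inner regularity of $\theta_{\widehat G}$ applied to the open set $\inn K$, choose a compact set $C\subset \inn K$ with $\theta_{\widehat G}(\inn K\setminus C) $ small enough that $\norm{\widehat f - \widehat f\cdot\ifu C}_{L^2(\widehat G)} < \varepsilon/2$ (using that $\widehat f\in L^2$, so the tail integral over $\inn K\setminus C$ is controlled). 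Thus it remains to approximate the compactly supported $L^2$ function $g:=\widehat f\cdot\ifu C$ by a function in $C_c(\widehat G)$ whose support lies inside $\inn K$.

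The second and main step is a standard mollification: since $C$ is compact and $\inn K$ is an open neighborhood of $C$, pick a compact zero neighborhood $U\subset\widehat G$ with $C+U+U\subset \inn K$, and take a Dirac net (or Dirac sequence, as $\widehat G$ is metrizable — recall Remark after the Paley--Wiener definition, or just invoke $\sigma$-compactness via the reductions already in the paper) $(u_j)_j$ with $\supp(u_j)\subset U$. Then $g_j := g * u_j \in C_c(\widehat G)$ with $\supp(g_j)\subset C+U \subset \inn K$, and $g_j\to g$ in $L^2(\widehat G)$ by continuity of convolution on $L^2$ against an approximate identity (e.g.\ \cite{DE}). Choose $j$ with $\norm{g_j - g}_{L^2(\widehat G)} < \varepsilon/2$, so $\norm{g_j - \widehat f}_{L^2(\widehat G)} < \varepsilon$. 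Finally, set $f_\varepsilon := \widecheck{g_j}$; since $g_j\in C_c(\widehat G)\subset L^1(\widehat G)\cap L^2(\widehat G)$, Fourier inversion gives $\widehat{f_\varepsilon} = g_j$, hence $\widehat{f_\varepsilon}$ is supported in $\inn K\subset K$, so $f_\varepsilon\in \PW_K$; and $g_j\in L^2(\widehat G)$ with $g_j = \widehat{f_\varepsilon}$ means $f_\varepsilon\in L^2(G)$, while... I should also check $f_\varepsilon\in L^1(G)$: this holds because $g_j$ is a compactly supported continuous function, hence $g_j\in L^1\cap L^2$, but integrability of $\widecheck{g_j}$ is not automatic — however $f_\varepsilon\in\CS_K$ only requires $f_\varepsilon\in L^1(G)$ with $\widehat{f_\varepsilon}|_{K^c}=0$; since $\widehat{f_\varepsilon}=g_j\in C_c(\widehat G)$, and $\widehat{f_\varepsilon}\in L^1(\widehat G)$, Fourier inversion puts $f_\varepsilon=\widecheck{\widehat{f_\varepsilon}}\in C_0(G)$, but to get $L^1$ I would instead note $\CS_K$ is characterized by $f\in L^1(G)$, $\widehat f\in C_c(\widehat G)$ — so I would mollify once more or observe that $\widehat{f_\varepsilon}\in C_c^{?}$; cleanly, the right move is: a function whose Fourier transform is a compactly supported continuous function that is itself a convolution $g * u_j$ lies in $L^1(G)$ precisely when one can write $g_j = \phi_1 * \phi_2$ with $\phi_i\in L^2$ so that $\widecheck{g_j} = \widecheck{\phi_1}\widecheck{\phi_2}\in L^1$ — which is exactly the situation here since $g\in L^2$ and $u_j\in L^2$ (as $u_j\in C_c$).

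\textbf{Main obstacle.} The only subtle point is verifying $f_\varepsilon\in L^1(G)$ and not merely $f_\varepsilon\in C_0(G)\cap L^2(G)$; membership in $\CS_K$ genuinely requires integrability. I expect to handle this by exploiting that the mollified transform factors as a convolution $g*u_j$ of two $L^2(\widehat G)$ functions, so on the $G$-side $f_\varepsilon=\widecheck g\cdot\widecheck{u_j}$ is a product of two $L^2(G)$ functions, hence in $L^1(G)$ by Cauchy--Schwarz. Everything else — the Riemann-measurability reduction, inner regularity, and the approximate-identity limit — is routine and uses only tools already invoked in the paper (regularity of Haar measure, Dirac nets from \cite{DE}, Fourier inversion).
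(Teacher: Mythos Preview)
Your proposal is correct and resolves the key obstacle (membership of $f_\varepsilon$ in $L^1(G)$) in exactly the way the paper does: write $\widehat{f_\varepsilon}$ as a convolution of two $L^2(\widehat G)$ functions, so that on the $G$-side $f_\varepsilon$ is a product of two $L^2(G)$ functions and hence lies in $L^1(G)$ by Cauchy--Schwarz.

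The route you take to that point, however, is different and somewhat more direct than the paper's. The paper first approximates $\widehat f$ in $L^2$ by some $\varphi\in K_2(\widehat G)=\lin\{\psi*\widetilde\psi:\psi\in C_c(\widehat G)\}$, then cuts off by $\ifu{K\setminus\partial^U K}$ (the van Hove--thickened interior), and finally mollifies; Riemann measurability enters through the estimate $\norm{\widehat f\cdot\ifu{\partial^U K}}_2<\varepsilon$ for sufficiently small $U$, invoking \cite[Lem.~4.4]{HR15}. Your argument bypasses the $K_2$ approximation entirely: you use Riemann measurability once to pass from $K$ to $\inn K$, then invoke inner regularity of the finite Radon measure $\abs{\widehat f}^2\theta_{\widehat G}$ on the open set $\inn K$ to cut down to a compact $C\subset\inn K$, and mollify directly. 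The gain is one fewer approximation layer and no reliance on the van Hove boundary machinery; the paper's version, on the other hand, stays within the measure-theoretic toolkit ($K_2$, $\partial^U K$) used throughout the article. A small cosmetic point: you only need $C+U\subset\inn K$, not $C+U+U$.
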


\begin{proof}
  We will use the function space $K_2(G)=\lin \{ f*\widetilde{f} : f \in C_c(G) \}$, where $\widetilde f(x)=\overline{f(-x)}$, compare \cite{ARMA1}.
  Take any $f\in \PW_K$.
  For any given $\varepsilon>0$, we define an approximation $f_\varepsilon\in \CS_K$ as follows.
  Take any Dirac net $(v_j)_j$ in~$\widehat G$.
  As $C_c(\widehat G)$ is dense in $L^2(\widehat G)$ and $K_2(\widehat G)$ is dense in
  $C_c(\widehat G)$ and the embedding $C_c(\widehat G)\subset L^2(\widehat G)$ is continuous, also $K_2(\widehat G)$ is dense in $L^2(\widehat G)$.
  Hence we may choose $\varphi\in K_2(\widehat G)$ such that $\norm{\varphi-\widehat f}_2<\varepsilon$.
  Choose any compact unit neighborhood $U\subset \widehat G$ such that $\norm{\widehat f \cdot  \ifu {\partial^UK}}_2<\varepsilon$.
  Such a neighborhood indeed exists:  By H\"older's inequality we have $\norm{\widehat f \cdot  \ifu {\partial^UK}}_2^2\le \norm{\widehat f}_2^2 \cdot \norm{\ifu {\partial^U K}}_2^2=\norm{f}_2^2 \cdot \theta_{\widehat G}(\partial^U K)$. Now the result follows from Riemann measurability of~$K$ and outer regularity of the Haar measure by taking~$U$ sufficiently concentrated around unity, see \cite[Lem.~4.4]{HR15}.
  Choose~$j$ such that $\supp(v_j)\subset U$ and that $\norm{\widehat f-\widehat f *v_j}_2=\norm{f-f \cdot \widehat v_j}_2<\varepsilon$.
  The latter can be achieved as~$\widehat{v_j}$ converges locally to one.
  Consider now
  \begin{displaymath}
    \widehat{f_\varepsilon}:= v_j *(\varphi\cdot \ifu {K\setminus \partial^UK}) \in L^2(\widehat G)\,.
  \end{displaymath}
  Note that in fact $\widehat{f_\varepsilon}\in C_c(\widehat G)$, as it is a convolution of two compactly supported functions in $L^2(\widehat G)$.
  By construction we have $\supp(\widehat{f_\varepsilon})\subset K$. Furthermore, we have $f_\varepsilon:= \widecheck{\widehat{f_\varepsilon}}\in L^1(G)$ since
  \begin{displaymath}
    f_\varepsilon=\widecheck{v_j} \cdot (\widecheck \varphi * \widecheck{\ifu {K\setminus \partial^UK}})
  \end{displaymath}
  is a product of two functions in $L^2(G)$.
  Indeed, the convolution is in $L^2(G)$ by Young's inequality, as $\widecheck\varphi\in L^1(G)$ and $\widecheck{\ifu {K\setminus \partial^UK}}\in L^2(G)$.
  Altogether we  conclude  $f_\varepsilon\in \CS_K$. Now we can estimate
  \begin{displaymath}
  \begin{split}
    \lVert f-{}&f_\varepsilon\rVert_2
    = \norm{\widehat f-\widehat{f_\varepsilon}}_2
    =\norm{\widehat f- v_j *(\varphi\cdot \ifu {K\setminus \partial^UK})}_2\\
   &\le\norm{\widehat f-\widehat f*v_j}_2+\norm{\widehat f*v_j-v_j*(\widehat f\cdot\ifu{K\setminus\partial^UK})}_2+\norm{v_j*(\widehat f\cdot\ifu{K\setminus\partial^UK})-v_j*(\varphi\cdot\ifu{K\setminus\partial^UK})}_2 \\
   &=\norm{\widehat f-\widehat f * v_j}_2 + \norm{v_j *(\widehat f \cdot \ifu {\partial^UK})}_2 + \norm{v_j *((\widehat f-\varphi) \cdot \ifu {K\setminus \partial^UK})}_2\\
   &\le\norm{\widehat f-\widehat f * v_j}_2 + \norm{v_j}_1 \cdot \norm{ \widehat f \cdot  \ifu {\partial^UK} }_2 + \norm{v_j}_1 \cdot\norm{ (\widehat f-\varphi) \cdot \ifu {K\setminus\partial^UK} }_2\\
   &\le\norm{\widehat f-\widehat f*v_j}_2+\norm{ \widehat f \cdot  \ifu {\partial^UK} }_2 + \norm{ \widehat f-\varphi }_2 \le 3\varepsilon\,,
  \end{split}
  \end{displaymath}
  where we used Young's inequality in the fourth line.
  As $\varepsilon>0$ was arbitrary, this proves the claim of the proposition.
\end{proof}

\subsection{Another definition of Paley-Wiener space}

If $K\subset \widehat G$ is closed, then there is another definition of Paley-Wiener space. Recall that 
for a function $f\in L^2(G)$, its support is defined to be
\begin{displaymath}
\supp(f)=\{x\in G: \int_{U} \abs{f(y)}^2 \dd\theta_G(y) >0 \text{ for all open } U \ni x\}\,.
\end{displaymath}
We consider the subspace~$\PWd_K$ of~$\PW_K$ defined by
  \begin{displaymath}
    \PWd_K=\{f\in L^2(G) \mid \spt(\widehat f) \subset K\} \ .
  \end{displaymath}

\begin{prop}\label{prop:closedness}
 Let~$G$ be an LCA group. We have $\PWd_K=\PW_K$ if and only if\/ $\supp(\ifu K)\subset K$.  
  In particular $\PWd_K=\PW_K$ if\/~$K$ is closed in~$\widehat G$.
\end{prop}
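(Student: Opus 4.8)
The plan is to first prove the inclusion $\PWd_K\subset\PW_K$ unconditionally, and then characterise exactly when the reverse inclusion holds, so that the quantified statement and its "in particular" clause both fall out.

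\textit{Easy inclusion.} Suppose $f\in\PWd_K$, i.e.\ $\spt(\widehat f)\subset K$, hence $K^c\subset\spt(\widehat f)^c$. By definition $\spt(\widehat f)^c$ is the union of all open $U\subset\widehat G$ with $\int_U\abs{\widehat f}^2\dd\theta_{\widehat G}=0$. Since $\widehat f\in L^2(\widehat G)$ by Plancherel, the measure $\abs{\widehat f}^2\theta_{\widehat G}$ is a finite Radon measure, so this union is itself $\abs{\widehat f}^2\theta_{\widehat G}$-null (inner regularity plus a finite-subcover argument on compact subsets of the union). Therefore $\widehat f=0$ almost everywhere on $K^c$, that is, $f\in\PW_K$. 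Thus $\PWd_K\subset\PW_K$ always, and it remains to decide when $\PW_K\subset\PWd_K$.

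\textit{The key step.} For every $f\in\PW_K$ one has $\spt(\widehat f)\subset\spt(\ifu K)$. Indeed, if $\chi_0\notin\spt(\ifu K)$, choose an open $U\ni\chi_0$ with $\theta_{\widehat G}(U\cap K)=\int_U\ifu K\dd\theta_{\widehat G}=0$; since $\widehat f=0$ a.e.\ on $K^c$ we get $\widehat f=\widehat f\cdot\ifu K$ a.e., hence $\int_U\abs{\widehat f}^2\dd\theta_{\widehat G}=\int_{U\cap K}\abs{\widehat f}^2\dd\theta_{\widehat G}=0$, so $\chi_0\notin\spt(\widehat f)$. Consequently, if $\spt(\ifu K)\subset K$, then $\spt(\widehat f)\subset\spt(\ifu K)\subset K$ for every $f\in\PW_K$, i.e.\ $\PW_K\subset\PWd_K$; combined with the easy inclusion this gives $\PWd_K=\PW_K$, which is the "if" direction.

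\textit{Converse and the special case.} Since $K$ is relatively compact, $\theta_{\widehat G}(K)<\infty$, so $\ifu K\in L^2(\widehat G)$ and by Plancherel there is $f\in L^2(G)$ with $\widehat f=\ifu K$. Then $f\in\PW_K$, while $\spt(\widehat f)=\spt(\ifu K)$ (the measure-theoretic support depends only on the a.e.\ class). Hence if $\spt(\ifu K)\not\subset K$, this $f$ lies in $\PW_K\setminus\PWd_K$, so $\PWd_K\neq\PW_K$; this is the "only if" direction. Finally, if $K$ is closed and $\chi\notin K$, then $K^c$ is an open neighbourhood of $\chi$ with $\theta_{\widehat G}(K^c\cap K)=0$, so $\chi\notin\spt(\ifu K)$; thus $\spt(\ifu K)\subset K$ and the "if" case yields $\PWd_K=\PW_K$. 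There is no genuinely hard step here: the only points requiring care are keeping the two support conventions straight, the fact that the complement of the support of an $L^2$-function is a null set (inner regularity of Haar measure), and noting that relative compactness of $K$ is exactly what makes $\widecheck{\ifu K}$ a legitimate element of $\PW_K$ realising the extremal support.
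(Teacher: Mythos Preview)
Your argument is correct. Both directions are sound: the easy inclusion is exactly Lemma~\ref{lemma:supp}~\ref{item:ae} of the paper, and your key step reproduces the inclusion $L^2(K)\subset L^2(\widehat G:\supp(\ifu K))$ from Lemma~\ref{prop:L2closed}~(a). For the converse you give a direct witness, namely $f=\widecheck{\ifu K}\in\PW_K$ with $\spt(\widehat f)=\spt(\ifu K)$, which works because the standing hypothesis on~$K$ in the paper (relatively compact, measurable) guarantees $\ifu K\in L^2(\widehat G)$.

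The paper takes a more structural route: it transports the question via the Fourier isometry to the equality $L^2(\widehat G:K)=L^2(K)$, then invokes Lemma~\ref{prop:L2closed}, whose part~(a) gives $\cl{L^2(\widehat G:K)}=L^2(K)$ and whose part~(b) characterises closedness of $L^2(\widehat G:K)$ by $\supp(\ifu K)\subset K$. The converse there (\ref{item:L2GKclosed}$\Rightarrow$\ref{item:suppKsubset}) is noticeably more elaborate than yours: it shows that the union $K'=\bigcup_{f\in L^2(\widehat G:K)}\spt(f)$ is closed and equals $\supp(\ifu K)$, avoiding any finiteness assumption on $\theta_{\widehat G}(K)$. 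So the trade-off is: your argument is shorter and more transparent for the Paley--Wiener setting of the paper, while the paper's lemma is stated and proved for arbitrary measurable~$K$ (not necessarily of finite measure), at the cost of a more involved closedness argument.
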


A proof will be given at the end of this section after Lemma~\ref{prop:L2closed}.
We first collect some topological properties of the support.

\begin{lemma}\label{lemma:supp}
  Let~$G$ be an LCA group, and let\/ $K\subset G$ be measurable.
  \begin{enumerate}
  \item
    For all $f\in L^2(G)$, $\supp(f)\subset G$ is closed.
  \item\label{item:ae}
    For all $f\in L^2(G)$, we have $f=f\cdot\ifu{\supp(f)}$ almost everywhere.
  \item\label{item:Kclosedsupp1K}
    If~$K$ is closed, then $\supp(\ifu K)\subset K$.
  \item\label{item:KsetminussuppifuK}
    If~$K$ is open or $\theta_G(K)<\infty$ or~$G$ is $\sigma$-finite, then
    $\theta_G(K\setminus\supp(\ifu K))=0$.
  \end{enumerate}
\end{lemma}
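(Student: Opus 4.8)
The plan is to dispatch the four assertions separately; (i) and (iii) fall straight out of the definition, while (ii) and (iv) both reduce to a single inner--regularity-plus-compactness argument that I describe in the last two paragraphs. For (i), note that $G\setminus\supp(f)=\bigcup\{U\subset G:U\text{ open},\ \int_U\abs f^2\dd\theta_G=0\}$ directly from the definition of $\supp(f)$, which presents $G\setminus\supp(f)$ as a union of open sets, hence open, so $\supp(f)$ is closed. For (iii), if $x\notin K$ then, $K$ being closed, $G\setminus K$ is an open neighbourhood of $x$ with $\int_{G\setminus K}\abs{\ifu K}^2\dd\theta_G=\theta_G\bigl((G\setminus K)\cap K\bigr)=0$, so $x\notin\supp(\ifu K)$; hence $\supp(\ifu K)\subset K$.

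The substance is in (ii). The key point is that although $G$ need not be $\sigma$-compact, the set $\{f\ne0\}$ is $\sigma$-finite whenever $f\in L^2(G)$, since $\theta_G(\{\abs f>1/n\})\le n^2\norm[L^2(G)]{f}^2$ and $\{f\ne0\}=\bigcup_{n}\{\abs f>1/n\}$. Write $N=G\setminus\supp(f)$, which is open by (i); it suffices to show $\theta_G(N\cap\{f\ne0\})=0$, and for this it is enough to show $\theta_G(N\cap\{\abs f>1/n\})=0$ for each $n$. Suppose not. Since this set has finite Haar measure, inner regularity of the Haar measure yields a compact $C\subset N\cap\{\abs f>1/n\}$ with $\theta_G(C)>0$. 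Every $x\in C$ lies in $N$, hence admits an open neighbourhood $V_x$ with $\int_{V_x}\abs f^2\dd\theta_G=0$; by compactness finitely many $V_{x_1},\dots,V_{x_m}$ cover $C$, and then $f=0$ almost everywhere on $V_{x_1}\cup\dots\cup V_{x_m}\supset C$ by subadditivity. As $C\subset\{\abs f>1/n\}$ this forces $\theta_G(C)=0$, a contradiction. Thus $\theta_G(N\cap\{f\ne0\})=0$, i.e.\ $f=f\cdot\ifu{\supp(f)}$ almost everywhere.

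Finally, for (iv) put $\Omega=G\setminus\supp(\ifu K)$, open by (i), so that $K\setminus\supp(\ifu K)=\Omega\cap K$; we must show $\theta_G(\Omega\cap K)=0$. If $K$ is open, then for $x\in\Omega\cap K$ there is an open $U\ni x$ with $\theta_G(U\cap K)=0$, but $U\cap K$ is a nonempty open set and hence has strictly positive Haar measure, a contradiction; so $\Omega\cap K=\varnothing$. If instead $\theta_G(K)<\infty$, or $G$ is $\sigma$-finite (write $G=\bigcup_n G_n$ with $\theta_G(G_n)<\infty$, so $\Omega\cap K=\bigcup_n(\Omega\cap K\cap G_n)$), then $\Omega\cap K$ is a countable union of Borel sets of finite measure, and on each such piece the argument of (ii) applies verbatim, using $\int_U\abs{\ifu K}^2\dd\theta_G=\theta_G(U\cap K)$: a positive-measure piece would contain a positive-measure compact set covered by finitely many opens each meeting $K$ in a null set, which is absurd. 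Summing over the pieces gives $\theta_G(\Omega\cap K)=0$. The only real obstacle here is this regularity bookkeeping: inner regularity of Haar measure is guaranteed only on Borel sets of finite measure for a general LCA group, so the whole point is to reduce to that situation --- automatically via $\sigma$-finiteness of $\{f\ne0\}$ in (ii), and by precisely the hypotheses listed in (iv); once the reduction is made, the finite-subcover step is routine.
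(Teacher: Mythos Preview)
Your proof is correct and follows the same inner-regularity-plus-finite-subcover strategy as the paper. Two minor differences are worth noting. For (ii), the paper observes directly that $\abs{f}^2\theta_G$ is a finite Radon measure (via the Riesz representation theorem) and applies its inner regularity to the open set $G\setminus\supp(f)$, whereas you first reduce via Chebyshev to level sets of finite Haar measure and then invoke inner regularity of $\theta_G$ on sets of finite measure; both routes work, and the paper's avoids the extra decomposition. For (iv) with $K$ open, your argument is actually sharper than the paper's: you show $K\setminus\supp(\ifu K)=\varnothing$ outright by noting that a nonempty open set must have positive Haar measure, while the paper handles the open case through the same inner-regularity machinery it uses for the finite-measure case and only concludes that the difference is null.
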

\begin{proof}
  \begin{enumerate}[{Ad }(i).,wide]
  \item Fix $f\in L^2(G)$.
    For all $x\in G\setminus\supp(f)$,
    there is an open neighborhood~$U_x$
    with $\norm{f\cdot\ifu U}_2=0$.
    $U_x$ is open, hence $U_x\subset G\setminus\supp(f)$.
    Thus, $G\setminus\supp(f)=\Union_{x\in G\setminus\supp(f)}U_x$ is open.
  \item Let $f\in L^2(G)$.
    The measure $\abs f^2\cdot \theta_G$ is a finite Radon measure.
    Indeed, the map $C_c(G)\to\C$, defined by $g\mapsto\theta_G(\abs f^2\cdot g)$,
    is a continuous linear functional,
    since for all $g\in C_c(G)$ we have
    \begin{equation*}
      \abs*{\theta_G(\abs f^2\cdot g)}
      =\abs*{\int \abs f^2 g\dd\theta_G}
      \le C\cdot \norm g_\infty\,,
    \end{equation*}
    where $C=\norm f_2^2<\infty$ is independent of~$g$.
    By the Riesz representation theorem,
    $B\mapsto\theta_G(\abs f^2\cdot \ifu B)
     =\norm{f\cdot \ifu B}_2^2$
    is a Radon measure.
    \par
    We now turn to the support of~$f$.
    Each $x\in G\setminus\supp(f)$ has an open neighborhood~$U_x$
    such that $\norm{f\cdot \ifu{U_x}}_2=0$.
    Each compact $C\subset G\setminus\supp(f)$
    admits a finite subset $I_C\subset C$
    indexing the finite subcover $\Union_{x\in I_C}U_x\supset C$.
    Thus, $\norm{f\cdot \ifu C}_2^2\le\sum_{x\in I_C}\norm{f\cdot \ifu{U_x}}_2^2=0$.
    We now use regularity to conclude
    \begin{align*}
      \norm{f-f\cdot\ifu{\supp(f)}}_2^2&
      =\norm{f\cdot\ifu{G\setminus\supp(f)}}_2^2
      =\sup_{\substack{C\subset G\setminus\supp(f),\\
        \text{$C$ compact}}}\norm{f\cdot \ifu C}_2^2
      =0\,\text.
    \end{align*}
  \item
    For closed~$K\subset G$, the set $U=G\setminus K$ is open,
    and we have $\norm{\ifu K\cdot\ifu U}_2=0$.
    By definition of~$\supp$, we conclude $U\subset G\setminus\supp(\ifu K)$.
    This is equivalent to $\supp(\ifu K)\subset K$.
  \item Note that each
    $x\in K\setminus\supp(\ifu K)$ has an open neighborhood~$U_x$
    with $\norm{\ifu{U_x}\cdot\ifu K}_2=0$.
    For compact $C\subset K\setminus\supp(\ifu K)$,
    there is a finite set~$I_C\subset C$
    indexing a covering $\Union_{x\in I_C}U_x\supset C$.
    We conclude
    \begin{equation*}
      \theta_G(C)
      =\theta_G(C\isect K)
      \le\sum_{x\in I_C}\theta_G(U_x\isect K)
      =\sum_{x\in I_C}\norm{\ifu{U_x}\cdot\ifu K}_2^2
      =0\,\text.
    \end{equation*}
    The Haar measure~$\theta_G$ is inner regular,
    so we have for all open~$K$ and all~$K$ with $\theta_G(K)<\infty$
    \begin{equation*}
      \theta_G(K\setminus\supp(\ifu K))
      =\sup_{\substack{C\subset K\setminus\supp(\ifu K)\\
        \text{$C$ compact}}}\theta_G(C)
      =0\,\text.
    \end{equation*}
    If~$G$ is $\sigma$-finite, we exhaust~$K\setminus\supp(\ifu K)$
    with countably many sets of finite measure.
    \qedhere
  \end{enumerate}
\end{proof}


Let $K\subset G$ be measurable. We denote by $L^2(G:K)$
the subspace of functions having support within~$K$. In slight abuse of notation, we also write
 \begin{displaymath}
    L^2(K)=\{f\in L^2(G)\mid\norm{f\cdot\ifu{G\setminus K}}_2=0\}=\{f\in L^2(G) \mid  f |_{K^c}=0 \text{ almost everywhere} \}
  \end{displaymath}
The relation between these spaces is described by the following result.

\begin{lemma}\label{prop:L2closed}
  Let~$G$ be an LCA group and let\/ $K\subset G$ be measurable. Then the following hold.
  \begin{enumerate}[(a)]
  \item\label{item:a} We have
    \begin{equation*}
      \cl{L^2(G:K)} =L^2(K) \subset L^2(G:\supp(\ifu K))
    \end{equation*}
    with equality if and only if $\theta_G(\supp(\ifu K)\setminus K)=0$.
  \item\label{item:b} The following are equivalent:
    \begin{enumerate}[label=\textit{(\roman*)}]
    \item\label{item:suppKsubset} $\supp(\ifu K)\subset K$.
    \item\label{item:L2GKclosed} $L^2(G:K)$ is a closed subspace of\/ $L^2(G)$.
    \end{enumerate}
  \end{enumerate}
   In particular, if\/~$K$ is closed in~$G$,
    then $L^2(G:K)$ is closed in~$L^2(G)$, and the spaces in (a) coincide.
\end{lemma}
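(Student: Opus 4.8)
The plan is to isolate a handful of general facts, establish the one genuine approximation, and then assemble the pieces. First I would record three observations valid for any measurable $K$. (1) $L^2(G:K)\subseteq L^2(K)$: if $\supp(f)\subseteq K$ then $f=f\cdot\ifu{\supp(f)}$ almost everywhere by Lemma~\ref{lemma:supp}(ii), so $f$ vanishes almost everywhere off $K$. (2) $L^2(K)$ is closed in $L^2(G)$, being the kernel of the bounded multiplication operator $f\mapsto f\cdot\ifu{G\setminus K}$. (3) $L^2(K)\subseteq L^2(G:\supp(\ifu K))$: for $f\in L^2(K)$ and $x\notin\supp(\ifu K)$, pick an open $U\ni x$ with $\theta_G(U\cap K)=0$; since $f=0$ a.e.\ off $K$, also $f=0$ a.e.\ on $U$, so $x\notin\supp(f)$. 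Applying (1) and (3) with an arbitrary closed set $F$ in place of $K$, together with $\supp(\ifu F)\subseteq F$ from Lemma~\ref{lemma:supp}(iii), yields the identity $L^2(G:F)=L^2(F)$ for every closed $F$; I will use this with $F=\supp(\ifu K)$, which is closed by Lemma~\ref{lemma:supp}(i).

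Next I would prove the density $\cl{L^2(G:K)}=L^2(K)$, for which by (1) and (2) only the inclusion $L^2(K)\subseteq\cl{L^2(G:K)}$ remains. Given $f\in L^2(K)$ and $\varepsilon>0$, the measure $\nu=\abs{f}^2\,\theta_G$ is a finite Radon measure (as shown in the proof of Lemma~\ref{lemma:supp}) and $\nu(G\setminus K)=0$ because $f$ vanishes a.e.\ off $K$. By inner regularity of the finite measure $\nu$, choose a compact $C\subseteq K$ with $\nu(K\setminus C)<\varepsilon^2$. Then $f\cdot\ifu C\in L^2(G)$ vanishes off the compact (hence closed) set $C\subseteq K$, so it lies in $L^2(C)=L^2(G:C)\subseteq L^2(G:K)$, while $\norm{f-f\cdot\ifu C}_2^2=\nu(G\setminus C)=\nu(K\setminus C)<\varepsilon^2$.

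With this I would finish (a) by observing $L^2(G:\supp(\ifu K))=L^2(\supp(\ifu K))$ (the identity above) and $\theta_G(K\setminus\supp(\ifu K))=0$ (Lemma~\ref{lemma:supp}(iv)), so $L^2(K)$ and $L^2(\supp(\ifu K))$ coincide exactly when $\theta_G(\supp(\ifu K)\setminus K)=0$. For (b): if $\supp(\ifu K)\subseteq K$ then $L^2(G:\supp(\ifu K))\subseteq L^2(G:K)$, which with (1) and (3) forces $L^2(G:K)=L^2(K)$, hence closed. Conversely, assume $L^2(G:K)$ is closed, so $L^2(G:K)=\cl{L^2(G:K)}=L^2(K)$ by (a); if there were some $x_0\in\supp(\ifu K)\setminus K$, I would pick a compact neighborhood $U_0$ of $x_0$ (so $x_0\in\inn{U_0}$ and $\theta_G(U_0)<\infty$) and set $f=\ifu{K\cap U_0}\in L^2(K)=L^2(G:K)$, whence $\supp(f)\subseteq K$; but for any open $V\ni x_0$ the set $V\cap\inn{U_0}$ is an open neighborhood of $x_0$, so $\norm{f\cdot\ifu V}_2^2=\theta_G(V\cap U_0\cap K)\ge\theta_G(V\cap\inn{U_0}\cap K)>0$, giving $x_0\in\supp(f)\subseteq K$, a contradiction. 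Finally, for closed $K$ one has $\supp(\ifu K)\subseteq K$ (Lemma~\ref{lemma:supp}(iii)), so (b) yields closedness of $L^2(G:K)$ and $L^2(G:K)=L^2(K)$, and since $\supp(\ifu K)\setminus K=\varnothing$ the criterion in (a) collapses all four spaces to one.

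I expect the density step to be the crux: upgrading ``$f$ is essentially supported on the measurable set $K$'' to ``$f$ is a limit of functions with genuine closed support inside $K$'', which is precisely where inner regularity of $\abs{f}^2\theta_G$ enters; the explicit test function in the converse direction of (b) is the only other place demanding an actual construction rather than bookkeeping.
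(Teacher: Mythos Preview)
Your proof is correct. For the chain of inclusions in (a), the density step via inner regularity of $\lvert f\rvert^2\theta_G$, and the implication (i)$\Rightarrow$(ii) in (b), you follow essentially the paper's route. One minor wrinkle: your appeal to Lemma~\ref{lemma:supp}\ref{item:KsetminussuppifuK} in the equality criterion of (a) invokes hypotheses ($K$ open, or $\theta_G(K)<\infty$, or $G$ $\sigma$-finite) that are not assumed here. It is also unnecessary: you already have $L^2(K)\subseteq L^2(G:\supp(\ifu K))$, so equality reduces to the reverse inclusion, which holds iff $\theta_G(\supp(\ifu K)\setminus K)=0$ by the paper's direct argument (decompose $K^c$ for one direction; exhibit $\ifu C$ for compact $C\subset\supp(\ifu K)\setminus K$ of positive measure for the other). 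The symmetric-difference reformulation is an avoidable detour.

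Your proof of (ii)$\Rightarrow$(i) in (b), however, is genuinely different from and shorter than the paper's. The paper introduces the auxiliary set $K':=\bigcup_{f\in L^2(G:K)}\supp(f)$, first establishes $\cl{K'}=\supp(\ifu K)$, and then proves via a countable-weighted-sum construction that closedness of $L^2(G:K)$ forces $K'$ itself to be closed, concluding $\supp(\ifu K)=\cl{K'}=K'\subseteq K$. You bypass $K'$ entirely: from $L^2(G:K)=L^2(K)$ and a hypothetical $x_0\in\supp(\ifu K)\setminus K$, the single test function $\ifu{K\cap U_0}$ (for any compact neighborhood $U_0$ of $x_0$) lies in $L^2(G:K)$ yet has $x_0$ in its support by the very definition of $\supp(\ifu K)$, an immediate contradiction. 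This is a clean shortcut that avoids both of the paper's preparatory steps.
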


\begin{proof}
  We begin with~\ref{item:a}.
  \begin{itemize}[wide]
  \item
    \emph{$\cl{L^2(G:K)}\subset L^2(K)$:}
    Since $L^2(G)\ni f\mapsto\norm{f\cdot\ifu{G\setminus K}}_2\in\R$
    is continuous, the right hand side is closed.
    Fix $f\in L^2(G:K)$.
    From Lemma~\ref{lemma:supp}~\ref{item:ae}\ and $\supp(f)\subset K$, we get
    $\norm{f\cdot\ifu{G\setminus K}}_2
      =\norm{f\cdot\ifu{\supp(f)}\cdot\ifu{G\setminus K}}_2
      =0$, which establishes the claim.
  \item
    \emph{$\cl{L^2(G:K)}\supset L^2(K)$:}
    Fix $f\in L^2(K)$.
    Using the inner regularity of the Radon measure $\abs f^2\cdot\theta_G$,
    compare the proof of Lemma~\ref{lemma:supp}~\ref{item:ae},
    we see that
    \begin{equation*}
      \norm f_2^2
      =\norm{f\cdot\ifu K}_2^2+\norm{f\cdot\ifu{G\setminus K}}_2^2
      =\sup_{\substack{C\subset K\\\text{$C$ compact}}}\norm{f\cdot\ifu C}_2^2
      =\lim_{n\to\infty}\norm{f\cdot\ifu{C_n}}_2^2
    \end{equation*}
    for a suitable sequence of compact~$C_n\subset K$, $n\in\N$.
    For all $n\in\N$, we know by Lemma~\ref{lemma:supp}~\ref{item:Kclosedsupp1K}
    that $\supp(\ifu{C_n})\subset C_n\subset K$ since~$C_n$ is closed,
    so $f\cdot\ifu{C_n}\in L^2(G:K)$.
    Now
    \begin{equation*}
      \norm{f-f\cdot\ifu{C_n}}_2^2
      =\int\abs f^2\cdot(1-\ifu{C_n})\dd\theta_G
      =\norm f_2^2-\norm{f\cdot\ifu{C_n}}_2^2
      \xto{n\to\infty}0\,\text.
    \end{equation*}
  \item \emph{$L^2(K)\subset L^2(G:\supp(\ifu K))$:}
    Fix $f\in L^2(G)$ with $\norm{f\cdot\ifu{G\setminus K}}_2=0$
    and define $U=G\setminus\supp(\ifu K)$.
    Note that~$U$ is open and satisfies $\norm{\ifu U\cdot\ifu K}_2=0$.
    Then
    \begin{equation*}
      \norm{f\cdot\ifu U}_2 = \norm{f\cdot\ifu K\cdot\ifu U}_2+\norm{f\cdot\ifu{G\setminus K}\cdot\ifu U}_2
      =0\,\text.
    \end{equation*}
    Therefore, $U\subset G\setminus\supp(f)$, which is equivalent to $\supp(f)\subset\supp(\ifu K)$.
  \item \emph{If $\theta_G(\supp(\ifu K)\setminus K)=0$, then
    $L^2(K)\supset L^2(G:\supp(\ifu K))$:}
    Fix $f\in L^2(G:\supp(\ifu K))$ and note that
    $
      \supp(f)\isect(G\setminus K)
      \subset\supp(\ifu K)\setminus K
    $
    is by assumption a set of Haar measure zero.
    Therefore, by Lemma~\ref{lemma:supp}~\ref{item:ae},
    $\norm{f\cdot\ifu{G\setminus K}}_2
      =\norm{f\cdot\ifu{\supp(f)}\cdot\ifu{G\setminus K}}_2
      =0$.
  \item \emph{If\/ $\theta_G(\supp(\ifu K)\setminus K)>0$, then
    $L^2(K)\ne L^2(G:\supp(\ifu K))$:}
    By inner regularity of the Haar measure~$\theta_G$,
    we find a compact $C\subset\supp(\ifu K)\setminus K$
    with $\theta_G(C)>0$.
    Define $f=\ifu C \in L^2(G)$ and note
    $\norm{f\cdot\ifu{G\setminus K}}_2
      =\norm{f}_2
      =\theta_G(C)>0$,
    but by Lemma~\ref{lemma:supp}~\ref{item:Kclosedsupp1K}
    $\supp f
      \subset C
      \subset\supp(\ifu K)\setminus K
      \subset\supp(\ifu K)$.
  \end{itemize}

\smallskip 

  For the proof of~\ref{item:b}, we need some more prerequisites. Note first that we may assume $\theta_G(K)>0$
without loss of generality. Indeed, otherwise $\supp \ifu K=\varnothing$. This implies $L^2(G:K)=\{f\in L^2(G): f=0 \text{ almost everywhere}\}$, and the claim is trivially satisfied.
  \begin{enumerate}[1.,wide,resume]
  \item\label{item:K'suppifuK}
    \emph{Let $K':=\Union\limits_{f\in L^2(G:K)}\supp(f)$.
    Then $\cl{K'}=\supp(\ifu K)$:}
    From~\ref{item:a} we know that $K'\subset\supp(\ifu K)$,
    which implies $\cl{K'}\subset\supp(\ifu K)$ since $\supp(\ifu K)$ is closed.
    For the reverse inclusion, fix $x\in\supp(\ifu K)$
    and any open relatively compact neighborhood~$U$ of~$x$.
    As $K\isect U$ has finite measure, we can use inner regularity of the Haar measure to write
    \begin{equation*}
      0<\norm{\ifu K\cdot\ifu U}_2
      =\sup_{\substack{C\subset K\isect U\\\text{$C$ compact}}}\norm{\ifu C}_2\,.
    \end{equation*}
    Therefore, there exists a compact set $C\subset K\isect U$
    such that $\norm{\ifu C}_2>0$.
    We conclude from Lemma~\ref{lemma:supp}~\ref{item:KsetminussuppifuK}
    that $\theta_G(\supp(\ifu C))\ge\theta_G(C)>0$, so that $\supp(\ifu C)\ne\varnothing$.
    Because~$C$ is closed, we have $\supp(\ifu C)\subset C\subset K\isect U\subset K$,
    so $\ifu C\in L^2(G:K)$.
    Consequently $\varnothing\neq\supp(\ifu C)\subset K'\isect U$. Hence
    any open relatively compact neighborhood of~$x$ has nontrivial intersection with~$K'$.
    This implies $x\in\cl{K'}$, and since $x\in\supp(\ifu K)$
    was arbitrary, we have $\supp(\ifu K)\subset\cl{K'}$.
  \item\label{item:iimpliesK'closed}
    \emph{\ref{item:L2GKclosed} implies that $K'$ 
    is closed:}
    Assume $x_n\in K'$, $n\in\N$, and $x\in G$ such that $x_n\to x$.
    We show $x\in K'$.
    By definition of~$K'$, for each~$x_n\in K'$
    there exists $f_n\in L^2(G:K)$ such that $x_n\in\supp( f_n)$.
    In particular we have $\norm{f_n}_2>0$ for all~$n$.
    Hence we can define $f\in L^2(G)$ by
    \begin{equation*}
      f:=\sum_{n\in\N}\frac{\abs{f_n}}{2^n\norm{f_n}_2}
      \qtextq{using}
      \norm{f}_2
      \le\sum_{n\in\N}\frac{\norm{f_n}_2}{2^n\norm{f_n}_2}
      <\infty\,.
    \end{equation*}
    By assumption~\ref{item:L2GKclosed} that~$L^2(G:K)$ is closed,
    we have $f\in L^2(G:K)$.
    Now $x_n\in\supp(f)$ for every~$n$,
    because for all~$n$, we have $f\ge\frac{\abs{f_n}}{2^n\norm{f_n}_2}$,
    and hence for any open neighborhood~$U$ of~$x_n$
    \begin{equation*}
      \norm{f\cdot\ifu U}_2
      \ge\frac{\norm{f_n\cdot \ifu U}_2}{2^n\norm{f_n}_2}
      >0
      \,\text.
    \end{equation*}
    The support of~$f$ is closed, so this implies
    $x\in\supp(f)$. As $\supp(f)\subset K'$, this shows that~$K'$
    contains~$x$ and is therefore closed.
  \end{enumerate}
  With this preparation, the proof of~\ref{item:b} reduces to two lines.
  \begin{itemize}[wide]
  \item \emph{\ref{item:suppKsubset}$\implies$\ref{item:L2GKclosed}:}
    By~\ref{item:a} and~\ref{item:suppKsubset}, we have
    $\cl{L^2(G:K)}
      \subset L^2(G:\supp(\ifu K))
      \subset L^2(G:K)
    $.
  \item \emph{\ref{item:L2GKclosed}$\implies$
    \ref{item:suppKsubset}:}
    Use \ref{item:K'suppifuK}, \ref{item:iimpliesK'closed}, and
    the definition of~$K'$ for $\supp(\ifu K)=\cl{K'}=K'\subset K$.
    \qedhere
  \end{itemize}
\end{proof}

\begin{proof}[Proof of Proposition~\ref{prop:closedness}]
  As the Fourier transform is a norm isomorphism between $L^2(G)$ and $L^2(\widehat G)$,
  see e.\,g.~\cite[Thm.~4.4.13]{Rei2}, the space $\PWd_K$ is closed if and only if
  $L^2(\widehat G:K)$ is closed.
  Now the claim is obvious from Lemma~\ref{prop:L2closed}.
\end{proof}

\section{Van Hove Sequences and Banach densities}\label{sec:vHBd}

We collect some properties of van Hove boundaries $\partial^KA=((K+\cl{A})\cap \cl{A^c}) \cup ((K+\cl{A^c})\cap \cl{A})$  that will be used in the sequel. As the following proof shows, Lemma~\ref{lem:vHb} continues to hold in the non-abelian setting.

\begin{lemma}\label{lem:vHb}
Let~$G$ be an LCA group, and consider arbitrary sets $A,K,L\subset G$.  We then have
\begin{itemize}
\item[(i)] $K+\cl{A}\subset \inn{A} \cup \partial^K A \subset (K\cup \{0\})+\cl{A}$
\item[(ii)] $\inn{A}\setminus \partial^K A \subset \{x\in  \inn{A}: -K+x \subset \inn{A}\}$
\item[(iii)] $\partial^K(A+L)\subset (\partial^K A) + \cl{L}$
\item[(iv)] $(L+(\partial^K A))\subset (\partial^{L+K}A) \cup (\partial^L A)$
\end{itemize}
\end{lemma}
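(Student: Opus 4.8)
The plan is to derive all four inclusions by elementary set-chasing directly from the definition $\partial^K\!A=[(K+\cl A)\cap\cl{A^c}]\cup[(K+\cl{A^c})\cap\cl A]$, using repeatedly the two identities $(\inn A)^c=\cl{A^c}$ (so that $G=\inn A\cup\cl{A^c}$ and also $G=\cl A\cup\cl{A^c}$) and the translation equivariance $\partial^K(S+t)=\partial^K\!S+t$. For (i): if $x\in K+\cl A$ then either $x\in\inn A$, or else $x\in\cl{A^c}$, so $x\in(K+\cl A)\cap\cl{A^c}\subseteq\partial^K\!A$; this is the first inclusion. For the second, $\inn A\subseteq\cl A$, the first summand of $\partial^K\!A$ lies in $K+\cl A$ and the second summand lies in $\cl A$, all inside $(K\cup\{0\})+\cl A$. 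For (ii) I would argue by contraposition: if $x\in\inn A$ but $x-k\notin\inn A$ for some $k\in K$, then $x-k\in\cl{A^c}$, so $x\in(K+\cl{A^c})\cap\cl A\subseteq\partial^K\!A$, hence $x\notin\inn A\setminus\partial^K\!A$.

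For (iv), fix $x=\ell+y$ with $\ell\in L$ and $y\in\partial^K\!A$. Since $G=\cl A\cup\cl{A^c}$, I split on whether $x\in\cl{A^c}$ or $x\in\cl A$, and within each case on which of the two summands defining $\partial^K\!A$ contains $y$. Suppose $x\in\cl{A^c}$: if $y$ lies in the first summand, so $y\in K+\cl A$, then $x\in((L+K)+\cl A)\cap\cl{A^c}\subseteq\partial^{L+K}\!A$; if $y$ lies in the second summand, so $y\in\cl A$, then $x\in(L+\cl A)\cap\cl{A^c}\subseteq\partial^L\!A$. The case $x\in\cl A$ is the mirror image, with $\cl A$ and $\cl{A^c}$ interchanged: $y$ in the second summand gives $y\in K+\cl{A^c}$ and $x\in\partial^{L+K}\!A$, while $y$ in the first summand gives $y\in\cl{A^c}$ and $x\in\partial^L\!A$. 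This exhausts all cases using nothing beyond the definition.

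Part (iii) needs a little more. First I would record that for every $\ell\in L$ one has $(A+L)^c\subseteq A^c+\ell$ (if $x\notin A+L$ then $x-\ell\notin A$), hence $\cl{(A+L)^c}\subseteq\cl{A^c}+\ell$; passing to a net $(\ell_i)$ in $L$ converging to an arbitrary $\ell'\in\cl L$ and using that $\cl{A^c}$ is closed then shows that every $x\in\cl{(A+L)^c}$ satisfies $x-\ell'\in\cl{A^c}$ for all $\ell'\in\cl L$. Now take $x\in\partial^K(A+L)$ and split into the two defining cases. If $x\in(K+\cl{A+L})\cap\cl{(A+L)^c}$, I would write $x=k+a+\ell_0$ with $k\in K$, $a\in\cl A$, $\ell_0\in\cl L$, so that $x-\ell_0\in K+\cl A$ and, by the preceding paragraph, $x-\ell_0\in\cl{A^c}$; hence $x-\ell_0\in\partial^K\!A$ and $x\in\partial^K\!A+\cl L$. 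If $x\in(K+\cl{(A+L)^c})\cap\cl{A+L}$, writing $x=a+\ell_0$ with $a\in\cl A$ and $\ell_0\in\cl L$ gives $x-\ell_0=a\in\cl A$, while $x=k+w$ with $w\in\cl{(A+L)^c}$ gives $x-\ell_0-k=w-\ell_0\in\cl{A^c}$, so $x-\ell_0\in(K+\cl{A^c})\cap\cl A\subseteq\partial^K\!A$ and again $x\in\partial^K\!A+\cl L$.

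The step I expect to be the main obstacle is the identity $\cl{A+L}=\cl A+\cl L$ used twice in the proof of (iii): the inclusion $\cl{A+L}\supseteq\cl A+\cl L$ and the identity $\cl{A+L}=\cl{\cl A+\cl L}$ hold for arbitrary sets, but the reverse inclusion $\cl{A+L}\subseteq\cl A+\cl L$ really needs some regularity — it holds, for instance, when $\cl L$ (or $\cl A$) is compact, so that $\cl A+\cl L$ is already closed, which covers the situations relevant in this article. I would therefore either check that (iii) is only applied in such a case, or replace the decomposition $x=k+a+\ell_0$ above by a net argument worked directly from $\cl{A+L}=\cl{\cl A+\cl L}$, keeping track of how the net in $\cl L$ interacts with the constraint $x-\ell\in\cl{A^c}$; the remaining bookkeeping is routine.
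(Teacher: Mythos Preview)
Your argument for (i)--(iii) follows essentially the same route as the paper's own proof (for (ii) the paper first simplifies $\inn A\setminus\partial^K A=\inn A\cap(K+\cl{A^c})^c$ by set algebra before the contradiction step, but the content is the same). For (iv) the paper simply cites an earlier reference, whereas you supply a short direct case split; your argument there is correct.

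Your concern about (iii) is well founded and in fact applies equally to the paper's own proof, which from $x\in K\cl{AL}$ writes $x=ka\ell$ with $a\in\cl A$ and $\ell\in\cl L$, tacitly assuming $\cl{AL}\subseteq\cl A\cdot\cl L$. As you correctly anticipate, the only application of (iii) in the article takes $L$ to be the closure of a relatively compact set, so $\cl L$ is compact, $\cl A+\cl L$ is closed, and the decomposition is legitimate there; your diagnosis and proposed resolution match what is actually needed.
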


\begin{proof}
We will use multiplicative notation for the group operation, which is more compact than additive notation.

\noindent ``(i)'' Assume that $x\in K\cl{A}$ but $x\notin \inn{A}$. Then 
$x\in K\cl{A} \cap \inn{A}^c=K\cl{A}\cap \cl{A^c}\subset \partial^KA$. The second statement is obvious.

\noindent ``(ii)'' We compute
\begin{displaymath}
\begin{split}
\inn{A} \setminus \partial^K A &= \inn{A}\setminus   \left((K\cl{A}\cap \cl{A^c}) \cup (K\cl{A^c}\cap\cl{A}) \right) \\
 &= \inn{A}\cap   ((K\cl{A})^c\cup \inn{A}) \cap ((K\cl{A^c})^c\cup\inn{A^c})\\
&= (\inn{A}\cap (K\cl{A^c})^c) \cup (\inn{A} \cap (K\cl{A})^c \cap (K\cl{A^c})^c)= \inn{A}\cap (K\cl{A^c})^c 
\end{split}
\end{displaymath}
Now the claim follows: Assume $k^{-1}x\notin \inn{A}$ for some $k\in K$ and some $x\in \inn{A} \setminus \partial^K A$. Then $x\in k\inn{A}^c\subset K\cl{A^c}$, which is a contradiction.

\noindent ``(iii)'' Let us first consider $x\in K\cl{AL}\cap (\inn{AL})^c$. Then $x=ka\ell$ for some $k\in K$, $a\in \cl{A}$ and $\ell\in \cl{L}$. We show $ka \in \inn{A}^c$, which results in $x\in (K\cl{A} \cap \inn{A}^c) \cl{L}\subset (\partial^KA)\cl{L}$. Assume $ka=x\ell^{-1} \in \inn{A}$. Then $x\in \inn{A}\ell\subset \inn{A}\cl{L} \subset \inn{AL}$, which is a contradiction. It remains to consider the case $x\in K (\inn{AL})^c \cap \cl{AL}$. Then $x=a\ell$ for some $a\in \cl{A}$ and some $\ell\in \cl{L}$. We show $a=x\ell^{-1}\in K\inn{A}^c$, which proves the claim. Indeed, assume $x\ell^{-1} \notin K \inn{A}^c$. Then $x\notin K(\inn{A}\ell)^c \supset K (\inn{A}\cl{L})^c \supset K (\inn{AL})^c$, which is a contradiction.
 
\noindent ``(iv)'' This is proved in \cite[Lem.~2.13]{mr13}.
\end{proof}

The following lemma shows that van Hove sequences may be nested and may have simple topological properties. In fact they may be supersets of any relatively compact seed.

\begin{lemma}\label{lem:vHs}
Let~$G$ be a $\sigma$-compact LCA group with Haar measure~$\theta_G$. Consider any relatively compact set $A\subset G$.
Then there exists a van Hove sequence $(A_n)_{n}$ such that $A_n=\cl{\inn{A_n}}$,  $\theta_G(\partial A_n)=0$, $A_n\subset \inn{A_{n+1}}$ and $A\subset A_n$ for all~$n$.
\end{lemma}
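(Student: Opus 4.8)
The plan is to obtain $(A_n)_n$ by regularizing and carefully thickening a given van Hove sequence. The case of compact $G$ is trivial (take $A_n=G$), so assume $G$ is non-compact. By \cite{Martin2} there is a van Hove sequence in $G$, which we may take increasing with union~$G$; since $G$ is non-compact this produces a van Hove sequence $(B_m)_m$ with $\theta_G(B_m)\to\infty$. Fix a compact symmetric zero neighbourhood $V\subset G$, together with compact symmetric zero neighbourhoods $V=U_0\supseteq U_1\supseteq U_2\supseteq\cdots$ satisfying $U_{j+1}+U_{j+1}\subseteq U_j$; an easy induction then gives $U_a+U_{a+1}+\dots+U_b\subseteq U_{a-1}$ whenever $1\le a\le b$, which is what keeps the thickenings below from accumulating.

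The main tool is a regularization statement: for every relatively compact open $W\subset G$ and every compact symmetric zero neighbourhood $V'$, there is a relatively compact open set $\widetilde W$ with $\cl W\subset\widetilde W\subset\cl W+V'$ and $\theta_G(\partial\widetilde W)=0$. To see this, use Urysohn's lemma to choose $f\in C_c(G)$ with $0\le f\le1$, $f\equiv1$ on $\cl W$ and $\supp f\subset W+\inn{V'}$; the non-increasing function $t\mapsto\theta_G(\{f>t\})$ is integrable over $(0,1)$, hence has a continuity point $t_0\in(0,1)$, and at $t_0$ one has $\theta_G(\{f=t_0\})=0$, so $\widetilde W:=\{f>t_0\}$ does the job. (Alternatively one may invoke \cite[Lem.~3.6]{HR15}.)

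Now construct $(O_m)_m$ inductively: let $O_1$ be a regularization, with $V':=U_1$, of the relatively compact open set $\inn{\cl A+U_1}\cup\inn{B_1+U_1}$, and, given $O_{m-1}$, let $O_m$ be a regularization, with $V':=U_m$, of $\inn{\cl A+U_m}\cup\inn{B_m+U_m}\cup(\cl{O_{m-1}}+\inn{U_m})$. Then each $O_m$ is open and relatively compact with $\theta_G(\partial O_m)=0$, and $\cl A\cup B_m\cup\cl{O_{m-1}}\subseteq O_m$, so $(O_m)_m$ is increasing with $\cl{O_{m-1}}\subseteq O_m$ and $\cl A\subseteq O_1$. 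Unrolling the recursion and using $U_{j+1}+U_{j+1}\subseteq U_j$ together with $B_j\subseteq B_m$ for $j\le m$ shows $O_m\subseteq(\cl A+2V)\cup(B_m+2V)$, where $2V:=V+V$. Setting $A_m:=\cl{O_m}$, one checks directly: $A_m$ is compact and equals $\cl{\inn{A_m}}$ (being the closure of an open set); $\partial A_m\subseteq\partial O_m$ has $\theta_G$-measure $0$; $A_m=\cl{O_m}\subseteq O_{m+1}\subseteq\inn{A_{m+1}}$; and $A\subseteq\cl A\subseteq O_1\subseteq O_m\subseteq A_m$ for all $m$. So it only remains to show that $(A_m)_m$ is van Hove.

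This last point is the technical heart. Fix a compact $K\subset G$. Since $O_m$ is open, $\cl{G\setminus O_m}=G\setminus O_m$ and $\inn{\cl{O_m}}\supseteq O_m$, whence
\begin{equation*}
\partial^K\!A_m\subseteq\bigl[(K+\cl{O_m})\setminus O_m\bigr]\cup\bigl[(K+(G\setminus O_m))\cap\cl{O_m}\bigr].
\end{equation*}
Feeding the sandwich $B_m\subseteq\cl{O_m}\subseteq(\cl A+2V)\cup(B_m+2V)$ into both sets, and using $O_m\supseteq B_m$ together with the inclusions $(K'+B_m)\setminus B_m\subseteq\partial^{K'}\!B_m$ and $B_m+2V\subseteq\inn{B_m}\cup\partial^{2V}\!B_m$ from Lemma~\ref{lem:vHb}~(i), one bounds $\theta_G(\partial^K\!A_m)$ by
\begin{equation*}
\theta_G(K+\cl A+2V)+\theta_G(\cl A+2V)+\theta_G(\partial^{K+2V}\!B_m)+\theta_G(\partial^{2V}\!B_m)+\theta_G(\partial^{K}\!B_m).
\end{equation*}
The first two terms are constants independent of $m$; the last three are $o(\theta_G(B_m))$ because $K+2V$, $2V$ and $K$ are fixed compact sets and $(B_m)_m$ is van Hove. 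Since $\theta_G(A_m)\ge\theta_G(B_m)\to\infty$, this gives $\theta_G(\partial^K\!A_m)/\theta_G(A_m)\to0$ for every compact $K$, i.e.\ $(A_m)_m$ is van Hove. I expect the two delicate points to be exactly these: keeping the regularizing enlargements under control (the role of the chain $U_{j+1}+U_{j+1}\subseteq U_j$, trapping every $O_m$ inside the fixed neighbourhood $(\cl A+2V)\cup(B_m+2V)$), and then extracting the van Hove estimate from this two-sided control through Lemma~\ref{lem:vHb}.
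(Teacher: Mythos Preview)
Your proof is correct and takes a genuinely different route from the paper's. The paper proceeds in two stages: first it treats the case $A=\{e\}$ by regularising an arbitrary van Hove sequence $(C_n)_n$ (via \cite[Lem.~3.6]{HR15}, essentially the same Urysohn level-set trick you use) to obtain $(B_n)_n$ with $B_n=\cl{\inn{B_n}}$ and $\theta_G(\partial B_n)=0$, and then forces the nesting $A_k\subset\inn{A_{k+1}}$ by passing to a \emph{shifted subsequence}, using the pigeonhole-type observation from Lemma~\ref{lem:vHb}~(ii) that $\inn{B_n}\setminus\partial^{A_k^{-1}}B_n$ is nonempty for large $n$ and any point $s$ in it satisfies $A_ks\subset\inn{B_n}$. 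Second, for general relatively compact $A$, it simply sets $A_n=B_n\cl A$ and controls $\partial^K(B_n\cl A)$ through the algebraic inclusions Lemma~\ref{lem:vHb}~(iii),(iv). Your construction is more unified: you build the nesting, the inclusion $A\subset A_m$, and the null-boundary property simultaneously into a single inductive regularisation, and you keep the cumulative enlargement bounded by the telescoping chain $U_{j+1}+U_{j+1}\subseteq U_j$, which pins every $\cl{O_m}$ inside the fixed collar $(\cl A+2V)\cup(B_m+2V)$. The van Hove estimate is then a direct two-sided sandwich, rather than an appeal to Lemma~\ref{lem:vHb}~(iii),(iv). The paper's two-stage argument is more modular; yours is more self-contained and sidesteps the shifted-subsequence mechanism entirely. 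One small point worth making explicit: you assume the initial van Hove sequence $(B_m)_m$ can be taken increasing, which the paper does not assume (it earns nesting through the translation step instead); this is standard, but you rely on it crucially in the unrolling via $B_j\subseteq B_m$.
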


\begin{proof}
(i) We first prove the assertion of the lemma for $A=\{e\}$. Take any van Hove sequence $(C_n)_n$. We  will first use $(C_n)_n$ to construct a van Hove sequence $(B_n)_n$ satisfying $B_n=\cl{\inn{B_n}}$ and $\theta_G(\partial B_n)=0$. Passing to a suitable shifted subsequence will then yield $(A_n)_n$ with the claimed properties. Take any open relatively compact zero neighborhood $U\subset G$. For every~$n$, take a compact set~$B_n$ such that $C_n\subset B_n\subset UC_n$, $\theta_G(\partial B_n)=0$ and $\cl{\inn{B_n}}=B_n$. This is possible by the construction given in the proof of \cite[Lem.~3.6]{HR15}. Consider any compact $K\subset G$. We can now estimate
\begin{displaymath}
\begin{split}
\partial^K B_n &= (K\cl{B_n} \cap \cl{B_n^c}) \cup  (K\cl{B_n^c} \cap \cl{B_n}) \\
&\subset 
 (K\cl{U}\cl{C_n} \cap \cl{C_n^c}) \cup  (K\cl{C_n^c} \cap \cl{U}\cl{C_n}) \\
&\subset \partial^{K\cl{U}}C_n \cup  ( K\cl{C_n^c} \cap ((\cl{C_n} \cap \cl{U}\cl{C_n})\cup( \cl{C_n^c} \cap \cl{U}\cl{C_n}))\\
&  \subset \partial^{K\cl{U}}C_n \cup  \partial^K C_n \cup \partial^{\cl{U}}C_n \ .
\end{split}
\end{displaymath}
Since $\theta_G(B_n)\ge \theta_G(C_n)$, the van Hove property $\theta_G(\partial^K B_n)/\theta_G(B_n)\to 0$ follows from the van Hove property of $(C_n)_n$. Next, we construct $(A_n)_n$ as follows. Take $b\in B_1$ and define $A_1=B_1b^{-1}$. Now assume that $A_1, \ldots, A_k$ satisfying the properties claimed in the statement of the lemma have been chosen. We construct $A_{k+1}$ following the proof of \cite[Lem.~2.6 (e)]{PS16}. Take~$n$ such that $\theta_G(\partial^{A_k^{-1}}B_n)< \theta_G(\inn{B_n})=\theta_G(B_n)$, which is possible due to the van Hove property of $(B_n)_n$. Then $\inn{B_n}\setminus \partial^{A_k^{-1}}B_n \subset \{x\in \inn{B_n}: A_kx\subset \inn{B_n}\}=:S_n$ by Lemma~\ref{lem:vHb} (ii) and
\begin{displaymath}
\theta_G(\inn{B_n}\setminus \partial^{A_k^{-1}}B_n) 
\ge \theta_G(\inn{B_n})-\theta_G(\partial^{A_k^{-1}}B_n) >0 \ .
\end{displaymath}
In particular, we may choose $s\in S_n$ and define $A_{k+1}=B_ns^{-1}$. Then $A_ks\subset \inn{B_n}$ implies $A_k\subset \inn{B_ns^{-1}}=\inn{A_{k+1}}$. It is obvious that $(A_n)_n$ is van Hove, since it is a shifted subsequence of the van Hove sequence $(B_n)_n$.

\noindent (ii) Now choose any relatively compact set $A\subset G$, which we assume to be nonempty without loss of generality.  Take any van Hove sequence $(B_n)_n$ having the properties in part (i) of the proof. Then the sequence $(A_n)_n$ of compact sets defined by $A_n=B_n\cl{A}$ satisfies $A\subset A_n$ and $A_n\subset \inn{A_{n+1}}$ for all~$n$. Take any compact $K\subset G$. By Lemma~\ref{lem:vHb}  and commutativity of~$G$, we can estimate
\begin{displaymath}
\theta_G(\partial^K A_n) \le \theta_G((\partial^KB_n) \cl{A}) = \theta_G(\cl{A}(\partial^KB_n) ) \le 
\theta_G((\partial^{\cl{A} K}B_n) ) + \theta_G((\partial^{\cl{A}} B_n) ) \ .
\end{displaymath}
This can be used to conclude that $(A_n)_n$ is van Hove. Indeed, we have
\begin{displaymath}
\frac{\theta_G(\partial^K A_n)}{\theta_G(A_n)} \le \frac{\theta_G(B_n)}{\theta_G(B_n\cl{A})} \cdot 
\left( \frac{\theta_G(\partial^{\cl{A} K}B_n)  }{\theta_G(B_n)}  + 
\frac{\theta_G(\partial^{\cl{A}}B_n)  }{\theta_G(B_n)}  \right) \to 0 \qquad (n\to \infty) \ .
\end{displaymath}
Here we used that the first factor on the rhs is bounded by~$1$ uniformly in~$n$. Indeed, by translation invariance of the Haar measure we may assume $e\in A$ without loss of generality. But then $B_n\subset B_n\cl{A}$, implying $\theta_G(B_n)/\theta_G(B_n\cl{A})\le1$.
\end{proof}

Following \cite{GKS08}, we consider a transitive relation $\preceq$ between non-negative translation bounded measures. Let $\mu, \nu\in \mathcal{M}^\infty(G)$ such that $\mu,\nu\ge0$. Then $\mu\preceq\nu$ iff for every $\varepsilon>0$ there exists compact $K\subset G$ such that for all compact $A\subset G$ we have $(1-\varepsilon) \mu(A)\le \nu(K+A)$.  Let us write $\nu\succeq\mu$ iff $\mu\preceq\nu$. We have the following result, which allows to  rephrase Theorem 1'' from \cite{GKS08} using upper and lower Banach densities.

\begin{lemma}\label{lem:BanvH}
Let~$G$ be a $\sigma$-compact LCA group with Haar measure~$\theta_G$. Let $\mathcal A=(A_n)_{n\in\mathbb N}$ be any van Hove sequence in~$G$. Then for any uniformly discrete set $\Lambda\subset G$ with associated counting measure $\delta_\Lambda$ and any $\alpha\ge 0$ the following implications hold.
\begin{itemize}
\item[(i)] $\delta_\Lambda \succeq \alpha\theta_G \Longrightarrow  \cd^-_\cA(\Lambda)\ge \alpha$
\item[(ii)] $\delta_\Lambda\preceq \alpha \theta_G \Longrightarrow  \cd^+_\cA(\Lambda) \le \alpha$
\end{itemize}
\end{lemma}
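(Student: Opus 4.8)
The plan is to unwind the definition of $\preceq$ in each case and compare the resulting set inequalities directly with the expressions defining $\cd^\pm_\cA$, the discrepancy being absorbed by the van Hove boundary. The two structural ingredients I would invoke are the translation equivariance $\partial^K(A_n+t)=(\partial^K\!A_n)+t$ together with translation invariance of $\theta_G$, and the inclusions of Lemma~\ref{lem:vHb}(i)--(ii); inner regularity of the Haar measure is needed so as to respect the compactness requirement built into the definition of $\preceq$. At the start of each part, given $\varepsilon\in(0,1)$ I would take the compact set $K$ furnished by $\preceq$ and replace it by $K\cup(-K)\cup\{0\}$, so that one may assume $0\in K=-K$ (enlarging $K$ only helps in both defining inequalities).

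For (i): from $\alpha\theta_G\preceq\delta_\Lambda$ one has, for the chosen symmetric $K$, that $(1-\varepsilon)\alpha\,\theta_G(A)\le\delta_\Lambda(K+A)$ for every compact $A\subset G$. Fixing $n$ and $t\in G$ and writing $B=A_n+t$, I would note that $U:=\inn B\setminus\partial^K\!B$ is open and that, by Lemma~\ref{lem:vHb}(ii) with $K=-K$, every compact $A\subset U$ satisfies $K+A\subset\inn B\subset B$; hence $(1-\varepsilon)\alpha\,\theta_G(A)\le\delta_\Lambda(B)=\mcard(\Lambda\cap(A_n+t))$. Passing to the supremum over compact $A\subset U$ via inner regularity of $\theta_G$, and using $\theta_G(U)\ge\theta_G(B)-\theta_G(\partial^K\!B)=\theta_G(A_n)-\theta_G(\partial^K\!A_n)$ (here $0\in K$ gives $\partial B\subset\partial^K\!B$, and translation invariance handles the shift), I would divide by $\theta_G(A_n)$, take $\inf_{t\in G}$, then $\liminf_{n\to\infty}$, and apply the van Hove property $\theta_G(\partial^K\!A_n)/\theta_G(A_n)\to0$ to obtain $(1-\varepsilon)\alpha\le\cd^-_\cA(\Lambda)$; letting $\varepsilon\downarrow0$ yields (i).

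For (ii): from $\delta_\Lambda\preceq\alpha\theta_G$ one has $(1-\varepsilon)\delta_\Lambda(A)\le\alpha\,\theta_G(K+A)$ for every compact $A$. Here the compact set $A=A_n+t$ can be fed in directly, giving $(1-\varepsilon)\mcard(\Lambda\cap(A_n+t))\le\alpha\,\theta_G(K+A_n+t)=\alpha\,\theta_G(K+A_n)$. By Lemma~\ref{lem:vHb}(i) we have $K+A_n\subset K+\cl{A_n}\subset\inn{A_n}\cup\partial^K\!A_n$, so $\theta_G(K+A_n)\le\theta_G(A_n)+\theta_G(\partial^K\!A_n)$; dividing by $\theta_G(A_n)$, taking $\sup_{t\in G}$ and then $\limsup_{n\to\infty}$, and using the van Hove property gives $(1-\varepsilon)\cd^+_\cA(\Lambda)\le\alpha$. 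Since uniform discreteness of $\Lambda$ makes $\cd^+_\cA(\Lambda)$ finite, letting $\varepsilon\downarrow0$ yields (ii).

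Most of this is bookkeeping; the one step that needs care is the passage in (i) from the open ``inner'' set $\inn B\setminus\partial^K\!B$ to its compact subsets, so that the definition of $\preceq$ — which only constrains compact test sets — can legitimately be applied, together with getting the inclusions of Lemma~\ref{lem:vHb} and the symmetrization of $K$ exactly right. I do not anticipate a genuine difficulty beyond that.
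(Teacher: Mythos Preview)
Your argument is correct. Part~(ii) is essentially identical to the paper's proof.

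For part~(i) you take a genuinely different route. The paper applies the $\preceq$ inequality directly to the compact test set $A_n+s$, obtains $(1-\varepsilon)\alpha\,\theta_G(A_n)\le\delta_\Lambda(K+A_n+s)$, and then uses Lemma~\ref{lem:vHb}(i) to split $\delta_\Lambda(K+A_n+s)\le\delta_\Lambda(A_n+s)+\delta_\Lambda(\partial^K(A_n+s))$. The second term is controlled by invoking an external result \cite[Lem.~9.2(b)]{LR}, which says that for uniformly discrete $\Lambda$ one has $\delta_\Lambda(\partial^K(A_n+s))/\theta_G(A_n)\to0$ uniformly in~$s$. Your approach instead shrinks the test set to compact subsets of $\inn B\setminus\partial^K\!B$ via Lemma~\ref{lem:vHb}(ii) and inner regularity, so that the $\preceq$ inequality already lands inside $B$; the error is then measured by $\theta_G(\partial^K\!A_n)/\theta_G(A_n)$, which is exactly the van Hove property with no appeal to uniform discreteness of~$\Lambda$ or to the external reference. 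What you gain is a more self-contained argument; what the paper's route gains is directness (one line of estimates rather than an inner-regularity detour).
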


\begin{proof}
``(i)'': Consider arbitrary $\varepsilon>0$. By assumption there exists compact $K\subset G$ such that for all compact $A\subset G$ we have
$(1-\varepsilon)\alpha \theta_G(A)\le \delta_\Lambda(K+A)$. Letting $A=A_n+s$, we have
\begin{displaymath}
(1-\varepsilon)\alpha \le \frac{\delta_\Lambda(K+A_n+s)}{\theta_G(A_n+s)} \le \frac{\delta_\Lambda(A_n+s)}{\theta_G(A_n)} + \frac{\delta_\Lambda(\partial^K(A_n+s))}{\theta_G(A_n)} \ ,
\end{displaymath}
where we used Lemma~\ref{lem:vHb} (i) and translation invariance of the Haar measure.
Now the second term on the rhs vanishes  as $n\to\infty$ uniformly in~$s$ by \cite[Lem.~9.2.(b)]{LR}. This implies $(1-\varepsilon)\alpha \le \cd^-_\cA(\Lambda)$. As $\varepsilon>0$ was arbitrary, the claim follows.

\smallskip

\noindent ``(ii)'': Consider arbitrary $\varepsilon>0$. By assumption there exists compact $K\subset G$ such that for all compact $A\subset G$ we have
$(1-\varepsilon)\delta_\Lambda(A)\le \alpha \theta_G(K+A)$. Letting $A=A_n+s$, we have
\begin{displaymath}
\frac{\alpha}{1-\varepsilon} \ge \frac{\delta_\Lambda(A_n+s)}{\theta_G(K+A_n+s)} = 
\frac{\delta_\Lambda(A_n+s)}{\theta_G(A_n)} \cdot \frac{\theta_G(A_n)}{\theta_G(K+A_n)} \ ,
\end{displaymath}
where we used translation invariance of the Haar measure.
Now the second factor on the rhs tends to unity as $n\to\infty$ by the van Hove property. Indeed, we may assume $0\in K$ and can then use Lemma~\ref{lem:vHb} (i). This implies $\alpha \ge (1-\varepsilon) \cd^+_\cA(\Lambda)$. As $\varepsilon>0$ was arbitrary, the claim follows.
\end{proof}

\end{document}